\theoremstyle{plain}
\newtheorem{theorem}{Theorem}[section]
\newtheorem{lemma}[theorem]{Lemma}
\newtheorem{proposition}[theorem]{Proposition}
\newtheorem{corollary}[theorem]{Corollary}
\newtheorem*{corollary*}{Corollary}
\newtheorem*{theorem*}{Theorem}
\newtheorem{question}[theorem]{Question}
\theoremstyle{definition}
\newtheorem{definition}[theorem]{Definition}
\newtheorem{example}[theorem]{Example}
\theoremstyle{remark}
\newtheorem{remark}[theorem]{Remark}
\numberwithin{equation}{section}
\begin{document}
\def\Prod{\displaystyle \prod}
\def\Lim{\displaystyle \lim}
\def\Int{\displaystyle \int}
\def\Sum{\displaystyle \sum}
\def\Frac{\displaystyle \frac}
\def\Sup{\displaystyle \sup}
\def\Inf{\displaystyle \inf}
\def\Min{\displaystyle \otimes_{min}}
\def\Max{\displaystyle \otimes_{max}}
\def\R{\mathcal{R}}
\def\C{\mathbb{C}}
\def\D{\mathbb{D}}
\def\T{\mathbb{T}}
\def\N{\mathbb{N}}
\def\Q{\mathbb{Q}}
\def\F{\mathbb{F}}
\def\M{\mathbb{M}}
\def\Z{\mathbb{Z}}
\def\H{\mathcal{H}}
\def\A{\mathcal{A}}
\def\I{\mathcal{I}}
\def\x{\textbf{x}}
\def\B{\mathbb{B}}
\def\K{\mathcal{K}}
\def\cQ{\mathcal{Q}}
\def\MAX{\otimes_{max}}
\def\MIN{\otimes_{min}}

\title{The universal $C^*$-algebra of a contraction}

\author{Kristin Courtney}
\thanks{The research of the first-named author was partially supported by the Eric Nordgren Research Fellowship Fund at the University of New Hampshire.}
\address{Department of Mathematics, University of Virginia, 141 Cabell Drive, Kerchof Hall, P.O. Box 400137, Charlottesville, VA 22904-4137 USA}
\curraddr{Mathematical Institute, WWU M\"{u}nster, Einsteinstr. 62, 48149 M\"{u}nster, Germany}
\email{kcourtne@uni-muenster.de}

\author{David Sherman}
\address{Department of Mathematics, University of Virginia, 141 Cabell Drive, Kerchof Hall, P.O. Box 400137, Charlottesville, VA 22904-4137 USA}
\email{dsherman@virginia.edu}
\subjclass[2010]{46L05}
\keywords{universal contraction, universal $C^*$-algebra, von Neumann's inequality, residual finite dimensionality, primitivity, nilpotent operators, Connes Embedding Problem}

\date{\today}

\begin{abstract}
We say that a contractive Hilbert space operator is universal if there is a natural surjection from its generated $C^*$-algebra to the $C^*$-algebra generated by any other contraction.  A universal contraction may be irreducible or a direct sum of (even nilpotent) matrices; we sharpen the latter fact and its proof in several ways, including von Neumann-type inequalities for noncommutative *-polynomials.  We also record properties of the unique $C^*$-algebra generated by a universal contraction, and we show that it can be used similarly to $C^*(\F_2)$ in various Kirchberg-like reformulations of Connes' Embedding Problem (some known, some new).  Finally we prove some analogous results for universal $C^*$-algebras of 
row contractions and universal Pythagorean $C^*$-algebras.





\end{abstract}

\maketitle

\tableofcontents

\section{Introduction}

For $T$ a contractive linear operator on a Hilbert space, let $C^*(T,I)$ denote the unital $C^*$-algebra generated by $T$ and the identity $I$. We say that $T$ is a \textit{universal contraction} if for any other contractive Hilbert space operator $S$ there is a unital *-homomorphism from $C^*(T,I)$ to $C^*(S,I)$ taking $T$ to $S$.  This is equivalent to requiring that for any noncommutative *-polynomial $q$, the norm of $q(T)$ is as large as it can be for a contraction.  (It should not be confused with ``universal (model) operators" as introduced by Rota \cite{R1959,R1960}, although we explain a connection in Remark \ref{R:model}.)

One goal of this paper is to describe universal contractions themselves.  On a given separable infinite-dimensional Hilbert space, universal contractions comprise a single approximate unitary equivalence class that is strong* dense in the set of all contractions (Propositions \ref{P:aue} and \ref{P:s*}).
But there are qualitative differences among them: a single universal contraction can be irreducible (Theorem \ref{T:prim}) or a direct sum of nilpotent matrices (Theorem \ref{nilprep}).  The latter result is known, but the existing proof is quite intricate.  In Section \ref{nilp} we give two alternative proofs, and in Section \ref{row} we give a generalization.

If $T_1$ and $T_2$ are universal contractions, then $C^*(T_1,I)$ is *-isomorphic to $C^*(T_2,I)$ via the map sending $T_1$ to $T_2$. Thus we may call this $C^*$-algebra (equipped with its distinguished generator) the \textit{universal unital $C^*$-algebra of a contraction}. We will denote it here by $\A$, and we will denote the non-unital $C^*$-algebra $C^*(T_1) \simeq C^*(T_2)$ by $\A_0$. (It is indeed non-unital; see Remark \ref{nounit}.) The familiar reader will notice that $\A$ can be identified with the universal unital $C^*$-algebra associated to the operator space $\C$, as in \cite{Pis03}, and the maximal $C^*$-dilation of the disk algebra, as in \cite{BD}. In the usual generator-relation notation for universal $C^*$-algebras, we write
$$\A_0 = C^*\langle x : \: \|x\| \leq 1 \rangle.$$
Adding a unit to the generating set and relations above gives us the unitization of $\A_0$, which is exactly $\A$.  
The two are essentially interchangeable in this paper, but we often make use of the unit and so choose $\A$ for the main discussion.

A $C^*$-algebra is called \textit{residually finite dimensional} (RFD) if it has a separating family of finite-dimensional representations. The fact that a universal contraction can be a direct sum of matrices says that $\A$ is RFD and gives us a *-polynomial analogue of von Neumann's inequality: the maximal norm of any noncommutative *-polynomial whose entry ranges over contractive Hilbert space operators can be determined by considering only contractive (even nilpotent) matrices.  In Section \ref{max} we sharpen this by showing that the maximal norm is actually achieved at a contractive matrix whose dimension depends only on the degree of the *-polynomial (Theorem \ref{FNT}).  But not all elements of $\A$ attain their norm in a finite-dimensional representation (Proposition \ref{P:AF}).  In Section \ref{Projectivity} we discuss the projectivity of $\A$ as a unital $C^*$-algebra, which is stronger than RFD \cite[Lemma 8.1.4]{Lor97} and implies that $\A$ has trivial topological invariants such as K-theory and shape theory.  Note that any separable RFD algebra has a faithful tracial state, so $\A$ is a finite $C^*$-algebra.

On the other hand, the algebra $\A$ is the mother of all singly-generated unital $C^*$-algebras, so it also has various ``largeness" properties.  Because it surjects onto a non-exact algebra, it is not exact and \textit{a fortiori} not nuclear (\cite[Corollary 1]{BN12} or \cite[Section 6.1]{BD}, although ``$\mathbb{K} \otimes B$" in the latter should be unitized).  Because it surjects onto a non-type I algebra (e.g., a UHF algebra, singly generated by \cite{Top68}), it is not type I.  This leads to comparisons with other large $C^*$-algebras, like the universal $C^*$-algebra of a partial isometry, to which $\A$ is Morita equivalent \cite{BN12}, and full $C^*$-algebras of free groups. In fact several of the results for $\A$ in Section 2 are patterned on corresponding results for $C^*(\F_2)$ obtained in the beautiful 1980 paper of Choi \cite{Cho80}.  Note that 
$C^*(\F_2)$ is not singly generated as a unital $C^*$-algebra, so it cannot be a quotient of $\A$.  However, $\A$ is a quotient of $C^*(\F_2)$, and they embed into each other relatively weakly injectively (Theorem \ref{rwithm}). Using Theorem \ref{rwithm} and results from Kirchberg's seminal work \cite{Kir93}, we give in Theorem \ref{QWEPlem} several characterizations of Connes' Embedding Problem in terms of $\A$ (some of these were deduced differently in \cite{Pis03,BD}). 

In Section \ref{remarks} we adapt some of our arguments and results for $\A$ to other $C^*$-algebras generated by multiple universal contractions. Theorem \ref{nilrow} generalizes Theorem \ref{nilprep} to 
row contractions. This leads to a Popescu-von Neumann inequality for noncommutative *-polynomials on 
row contractions (Corollary \ref{P-vN}). Section \ref{pythagorean} establishes that the universal Pythagorean $C^*$-algebras from \cite{BJ} are RFD. 

Our notation throughout the paper is fairly standard. For an operator $T\in B(\H)$, we denote its spectrum by $\sigma(T)$ and its essential spectrum by $\sigma_e(T)$. We denote the open unit disk and unit circle of $\C$ by $\D$ and $\T$, respectively.  We write $\M_n$ for the complex $n \times n$ matrices and freely associate it with $B(\C^n)$.

After this article was completed, the authors became aware of unpublished work from 1989 by Froelich and Salas along similar themes.  Upon request Prof. Salas graciously shared their manuscript; most of the overlap concerns results in Section 2.  We decided to make no changes to the present article except mention of the reference \cite{CH}.

The authors are grateful to David Blecher, Scott Atkinson, and an anonymous referee for useful comments on a draft of this article, and to Don Hadwin for many valuable discussions and perspectives. Some of this material is taken from the first-named author's 2018 PhD dissertation at the University of Virginia \cite{Cou18}. 

\section{Universal contraction operators} \label{S:op}

We start with an easy observation.

\begin{proposition} \label{P:norm}

The following are equivalent for a contractive operator $T$ on a Hilbert space.

\begin{enumerate}
\item The operator $T$ is a universal contraction; i.e., for any other contractive Hilbert space operator $S$, the assignment $T \mapsto S$ induces a *-homomorphism from $C^*(T,I)$ to $C^*(S,I)$.
\item For any noncommutative *-polynomial $q$,
\begin{equation} \label{E:norm}
\|q(T)\| = \sup_{S} \|q(S)\|,
\end{equation}
where the supremum is taken over all contractive Hilbert space operators.
\end{enumerate}
\end{proposition}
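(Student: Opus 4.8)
The plan is to prove the equivalence by unwinding the definition of a universal contraction into the language of norms of *-polynomials, using the fact that $C^*(T,I)$ is generated as a $C^*$-algebra by the single element $T$ together with $I$.

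First I would prove $(1)\Rightarrow(2)$. Fix a noncommutative *-polynomial $q$ and an arbitrary contractive operator $S$ on a Hilbert space. By hypothesis the assignment $T\mapsto S$, $I\mapsto I$ extends to a unital *-homomorphism $\pi\colon C^*(T,I)\to C^*(S,I)$. Since *-homomorphisms of $C^*$-algebras are norm non-increasing, and since $\pi(q(T))=q(S)$ because $\pi$ is a unital *-homomorphism and $q$ is a *-polynomial in the single generator, we get $\|q(S)\|=\|\pi(q(T))\|\le\|q(T)\|$. Taking the supremum over all contractions $S$ gives $\sup_S\|q(S)\|\le\|q(T)\|$. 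The reverse inequality is immediate since $T$ is itself a contraction, so we may take $S=T$ in the supremum. Hence \eqref{E:norm} holds.

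Next I would prove $(2)\Rightarrow(1)$. Let $S$ be any contractive operator. The strategy is to build the desired *-homomorphism as the composition of the canonical quotient map from the universal algebra $\A_0$ (or its unitization $\A$) onto $C^*(S,I)$ with a (to-be-constructed) *-isomorphism $C^*(T,I)\cong\A$; but a cleaner self-contained route is the following. Consider the map $q(T)\mapsto q(S)$ defined on the dense *-subalgebra of $C^*(T,I)$ consisting of all *-polynomials in $T$ and $I$. This is well-defined and bounded: if $q_1(T)=q_2(T)$, then applying (2) to the *-polynomial $q_1-q_2$ gives $\|q_1(S)-q_2(S)\|=\|(q_1-q_2)(S)\|\le\sup_{S'}\|(q_1-q_2)(S')\|=\|(q_1-q_2)(T)\|=0$, so $q_1(S)=q_2(S)$; and the same estimate with a single $q$ shows $\|q(S)\|\le\|q(T)\|$, so the map is contractive. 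It is clearly linear, multiplicative, and *-preserving on this dense *-subalgebra, hence extends uniquely to a *-homomorphism $C^*(T,I)\to C^*(S,I)$, which sends $T$ to $S$ and $I$ to $I$. This is exactly the statement of (1).

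The only genuinely delicate point, and the one I would state carefully, is the well-definedness argument in $(2)\Rightarrow(1)$: one must observe that the relation $q_1(T)=q_2(T)$ among *-polynomials evaluated at the universal $T$ forces the same relation at every contraction $S$, which is precisely what the norm identity \eqref{E:norm} applied to $q_1-q_2$ provides. Everything else is the standard fact that a $*$-homomorphism defined on a dense $*$-subalgebra and bounded in $C^*$-norm extends to the completion. I expect no substantive obstacle here; the proposition is essentially a translation exercise, and the proof is short.
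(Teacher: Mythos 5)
Your proof is correct and follows the same route as the paper: $(1)\Rightarrow(2)$ from contractivity of *-homomorphisms, and $(2)\Rightarrow(1)$ by checking that $q(T)\mapsto q(S)$ is well defined and contractive on the dense *-subalgebra of *-polynomials and hence extends. The paper states this in one line; your version merely makes the well-definedness step explicit.
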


\begin{proof}
We have that (1) implies (2) because *-homomorphisms are contractive, and (2) implies (1) because the assignment $q(T) \mapsto q(S)$ densely defines a continuous *-homomorphism between the $C^*$-algebras.  
\end{proof}

How can we produce universal contractions?  For the reader versed in universal $C^*$-algebras defined by generators and relations, it is clear that $\A$ is a separable nonzero algebra (see for instance \cite[Section 3.1]{Lor97}).  So it has a faithful representation on $\ell^2$, and the image of its distinguished generator is a separably-acting universal contraction.

The next proposition exhibits a universal contraction that is a little more concrete.  It is ``known to the experts" and seems to have been first mentioned as a tool in the proof of \cite[Theorem 5.1]{Had78}, but we lack a reference that provides the details.  Since it plays an important role in this paper, for the benefit of the reader we give a short argument using only basic operator theory.  (It also can be proved by appealing to the projectivity of $\A$; see the text after Remark \ref{nounit}.)


\begin{proposition} \label{P:hadwin} 

${}$

\begin{enumerate}
    \item The supremum in \eqref{E:norm} is unchanged if $S$ ranges only over matrix contractions.  
    \item Let $T$ be any direct sum of contractive matrices such that the $n \times n$ summands are dense in $(\M_n)_{\leq 1}$ for each $n$.  Then $T$ is a universal contraction.
    \item The algebra $\A$ is RFD.
\end{enumerate}
\end{proposition}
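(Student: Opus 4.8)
The plan is to prove all three parts together, with part (1) doing the real work via a compression argument; parts (2) and (3) then follow with almost no extra effort. For part (1) only the inequality $\sup_S \|q(S)\| \le \sup_M \|q(M)\|$ (supremum on the right over matrix contractions) requires an argument, since every matrix contraction is in particular a Hilbert space contraction, giving the reverse for free.

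So fix a contraction $S \in B(\H)$, a noncommutative *-polynomial $q$ of degree $d$, and $\varepsilon > 0$. Choose a unit vector $\xi \in \H$ with $\|q(S)\xi\| \ge \|q(S)\| - \varepsilon$, and let $V \subseteq \H$ be the finite-dimensional span of all vectors $w(S,S^*)\xi$, where $w$ ranges over words in two noncommuting letters of length at most $d$. Let $P$ be the orthogonal projection of $\H$ onto $V$ and put $M = PS|_V \in B(V)$, a matrix contraction with $M^* = PS^*|_V$. The key point is that $w(M,M^*)\xi = w(S,S^*)\xi$ for every word $w$ of length $\le d$: writing $w = a_1 a_2 \cdots a_k$ with $a_i \in \{S,S^*\}$ and $k \le d$, and evaluating from the innermost letter outward, each partial application $a_j a_{j+1}\cdots a_k \xi$ is a word of length $\le d$ applied to $\xi$, hence lies in $V$, so the intervening copies of $P$ act as the identity; and the final vector $w(S,S^*)\xi$ itself lies in $V$. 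Since $q(S)\xi$ is a linear combination of such vectors, it too lies in $V$, and summing over the monomials of $q$ gives $q(M)\xi = q(S)\xi$ exactly. Therefore $\|q(M)\| \ge \|q(M)\xi\| = \|q(S)\xi\| \ge \|q(S)\| - \varepsilon$. As $S$ and $\varepsilon$ were arbitrary, part (1) follows.

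For part (2), let $T = \bigoplus_k M_k$ be a direct sum as in the statement; we may take countably many summands by using a countable dense subset of each $(\M_n)_{\leq 1}$, so $T$ is a contraction on a separable Hilbert space. For any *-polynomial $q$ we have $q(T) = \bigoplus_k q(M_k)$ and hence $\|q(T)\| = \sup_k \|q(M_k)\|$. Given any matrix contraction $M \in (\M_n)_{\leq 1}$ and $\varepsilon > 0$, density supplies a summand $M_k$ of size $n$ with $\|M_k - M\|$ as small as we like; since $q$ is continuous on $\M_n$, this forces $\|q(M_k) - q(M)\| < \varepsilon$, whence $\|q(T)\| \ge \|q(M)\| - \varepsilon$. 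Taking the supremum over matrix contractions $M$ and invoking part (1) gives $\|q(T)\| \ge \sup_S \|q(S)\|$, while the reverse inequality is automatic because $T$ is itself a contraction. By Proposition \ref{P:norm}, $T$ is a universal contraction. Part (3) is now immediate: with $T$ as in part (2), both $T$ and $I$ are block diagonal, so every element of $\A = C^*(T,I)$ is block diagonal, the compression maps $\pi_k \colon \A \to \M_{n_k}$ onto the $k$-th block are finite-dimensional representations, and $a = 0$ whenever all $\pi_k(a) = 0$; hence $\A$ is RFD. (Since any two universal contractions generate the same $C^*$-algebra, this $C^*(T,I)$ really is $\A$.)

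The only genuinely delicate point is the verification in part (1) that $q(M)\xi = q(S)\xi$ on the nose, i.e., that $V$ was taken large enough to absorb every intermediate vector produced by a word of length $\le d$; once the bookkeeping on word lengths is set up correctly this is routine, and the rest is formal.
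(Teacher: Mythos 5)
Your proof is correct, but the engine behind part (1) is genuinely different from the paper's. The paper compresses $S$ to finite-dimensional subspaces $F$, notes that $P_F S P_F \to S$ strong\textsuperscript{*} (hence $q(P_F S P_F) \to q(S)$ strong\textsuperscript{*} by joint strong\textsuperscript{*} continuity of multiplication on bounded sets), and concludes $\|q(S)\| \le \liminf \|q(P_F S P_F)\|$ from lower semicontinuity of the norm --- a soft limiting argument with no bookkeeping. You instead pick a near-norming vector $\xi$, compress to the span $V$ of all words of length $\le d$ in $S,S^*$ applied to $\xi$, and verify that $q(M)\xi = q(S)\xi$ exactly because every intermediate vector stays inside $V$; your word-length induction is sound (including the base case $\xi \in V$ via the empty word, and the identity $M^* = PS^*|_V$). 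This is precisely the Choi-style compression the paper postpones to Theorem \ref{FNT}, and it buys more than the paper's proof of this proposition: it shows the supremum over matrices is $\varepsilon$-attained at a contraction of dimension at most $2^{d+1}$, so you have essentially proved the approximate version of Theorem \ref{FNT} already; the price is the bookkeeping you acknowledge. Parts (2) and (3) run as in the paper (density plus Lipschitz continuity of $q$ on bounded sets, then restriction to the blocks as a separating family); your aside about choosing countably many summands is harmless but unnecessary, since the statement quantifies over any such direct sum and the argument never uses separability.
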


\begin{proof}
All of these statements effectively say the same thing, so it suffices to prove (1).  (To see that $\A$ is RFD, note that restriction to the matrix summands of the operator in (2) gives a separating family of representations.)

Let $S$ be a contractive operator on the Hilbert space $\H$.  For $F$ a finite-dimensional subspace of $\H$, let $P_F$ be the projection onto $F$.  Then the net $\{P_F S P_F\}$, ordered by inclusion of the subspaces $F$, converges strong* to $S$.  For any noncommutative *-polynomial $q$, we also have $q(P_F S P_F) \to q(S)$ strong*.  By strong* lower semicontinuity of the norm, we have
$$\|q(S)\| = \|s^*-\lim q(P_F S P_F)\| \leq \liminf \|q(P_F S P_F)\|.$$
Now $P_F S P_F$ is the direct sum of a matrix and some multiple of the $1 \times 1$ zero operator.  It follows that 
$\|q(S)\|$ is not more than the maximal norm of $q$ evaluated on contractive matrices.  (In fact it is a maximum; see Theorem \ref{FNT} below.)
\end{proof}



In Section \ref{nilp} we give proofs and variations for the much harder fact, basically due to Herrero, that a universal contraction can be built as a direct sum of \textit{nilpotent} matrices.

So must a universal contraction be a direct sum of some sort?  No, we show below in Theorem \ref{T:prim} that there are irreducible universal contractions.  This is equivalent to proving that $\A$ is \textit{primitive}, meaning that it has a faithful irreducible representation.  Our argument mimics Choi's proof of primitivity for $C^*(\F_2)$ \cite[Theorem 6]{Cho80}, with additional reliance on the fact that $\A$ is RFD.


\begin{proposition} \label{P:compact}
The algebra $\A$ contains no nontrivial projections.  If $\pi$ is a faithful representation of $\A$, then $\pi(\A)$ contains no nonzero compact operators.
\end{proposition}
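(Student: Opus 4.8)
The plan is to obtain the second statement from the first, so the real content is that $\A$ has no nontrivial projections; the idea there is a homotopy-of-representations argument exploiting the fact that a contraction can be rescaled to zero. Realize $\A$ faithfully as $C^*(T,I)\subseteq B(\H)$, where $T$ is the distinguished generator $x$ (automatically a universal contraction). For each $t\in[0,1]$ the operator $tT$ is again a contraction, and since $\A$ is the universal unital $C^*$-algebra of a contraction there is a unital $*$-homomorphism $\rho_t\colon\A\to B(\H)$ with $\rho_t(x)=tT$; note that $\rho_1$ is just the given embedding $\A=C^*(T,I)\hookrightarrow B(\H)$. For a noncommutative $*$-polynomial $q$ the map $t\mapsto\rho_t(q(x))=q(tT)$ is visibly norm-continuous, and since $\|\rho_t\|\le1$ for all $t$, an $\varepsilon/3$-argument upgrades this to norm-continuity of $t\mapsto\rho_t(a)$ for every $a\in\A$. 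Moreover $\rho_0$ sends $x$ to $0$, hence factors through $\A/\langle x\rangle\cong\C$, so $\rho_0(\A)=\C\cdot I_\H$.

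Now let $p\in\A$ be a projection, so that $t\mapsto\rho_t(p)$ is a norm-continuous path of projections in $B(\H)$. Choosing a partition $0=t_0<t_1<\cdots<t_n=1$ fine enough that $\|\rho_{t_{i-1}}(p)-\rho_{t_i}(p)\|<1$ for every $i$, and invoking the standard fact that projections at norm-distance less than $1$ are conjugate by a unitary, one gets a unitary $u\in B(\H)$ with $\rho_1(p)=u\,\rho_0(p)\,u^*$. But $\rho_0(p)$ is a projection in $\C\cdot I_\H$, hence $0$ or $I_\H$, so $p=\rho_1(p)=u\,\rho_0(p)\,u^*$ is $0$ or $I_\H=I_\A$. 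Therefore $\A$ has no nontrivial projections.

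For the second statement, let $\pi$ be a faithful representation of $\A$ on $\H'$ and suppose, toward a contradiction, that $J:=\pi(\A)\cap\K(\H')\neq0$. Then $J$ is a nonzero closed two-sided ideal of $\pi(\A)$ consisting of compact operators, and every nonzero $C^*$-algebra of compact operators contains a nonzero finite-rank projection (apply continuous functional calculus to a nonzero positive element at an isolated nonzero point of its spectrum). Since $\A$ is infinite-dimensional (in the representation sending $x$ to the unilateral shift, the images of $x,x^2,x^3,\dots$ are linearly independent), the space $\H'$ is infinite-dimensional, so $\pi(I_\A)=I_{\H'}$ is not compact and the finite-rank projection just produced is neither $0$ nor $I_{\H'}$. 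Pulling it back along the faithful map $\pi$ yields a nontrivial projection in $\A$, contradicting the first statement. The one step deserving care is the reduction in the previous paragraph, which rests on the standard lemma on unitary conjugacy of nearby projections; once that is granted, everything else is bookkeeping.
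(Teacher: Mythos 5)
Your proof is correct and is essentially the paper's intended argument: the paper simply states that the result ``can be proved in the same way as [Theorem 1 and Corollary 2]'' of Choi's paper on $C^*(\F_2)$, and your homotopy of representations $\rho_t(x)=tT$ combined with the nearby-projections lemma, followed by extracting a finite-rank spectral projection from $\pi(\A)\cap \K(\H')$, is exactly that adaptation (in fact slightly cleaner, since a contraction scales to $0$ while Choi must connect unitaries to the identity). The only cosmetic point is that you implicitly assume $\pi$ is unital when writing $\pi(I_\A)=I_{\H'}$; in general one restricts to the range of the projection $\pi(I_\A)$, which changes nothing.
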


\begin{proof}
This can be proved in the same way as \cite[Theorem 1 and Corollary 2]{Cho80}. 
\end{proof}

\begin{lemma} \label{L:perturb}
Let $T$ be a universal contraction and $K$ be a compact operator.  If $T+K$ is a contraction, then it is a universal contraction.
\end{lemma}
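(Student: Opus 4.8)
The plan is to compare $C^*(T+K,I)$ with $\A$ through the Calkin algebra and reduce everything to Proposition~\ref{P:compact}. Since $T$ is a universal contraction we may identify $\A$ with $C^*(T,I)$, so the inclusion $\iota\colon\A\hookrightarrow B(\H)$, $\iota(x)=T$, is a faithful unital $*$-representation. On the other hand, $T+K$ is a contraction, so the universal property of $\A$ furnishes a unital $*$-homomorphism $\rho\colon\A\to B(\H)$ with $\rho(x)=T+K$, and its image is exactly $C^*(T+K,I)$. If I can show $\rho$ is faithful, hence isometric, then $\|q(T+K)\|=\|q(T)\|$ for every noncommutative $*$-polynomial $q$ (both sides equal the norm of $q(x)$ in $\A$, the left via $\rho$, the right via $\iota$), and since $\|q(T)\|=\sup_S\|q(S)\|$, Proposition~\ref{P:norm} shows $T+K$ is a universal contraction.

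To see that $\rho$ is injective, I would pass to the Calkin algebra $\cQ(\H)=B(\H)/\K(\H)$ with quotient map $\pi_{\cQ}$. Because $K$ is compact, $\pi_{\cQ}(T+K)=\pi_{\cQ}(T)$; hence the two unital $*$-homomorphisms $\pi_{\cQ}\circ\rho$ and $\pi_{\cQ}\circ\iota$ from $\A$ to $\cQ(\H)$ agree on the generator $x$ and on $I$, and therefore coincide on all of $\A$. Consequently
\[
\ker\rho \ \subseteq\ \ker(\pi_{\cQ}\circ\rho)\ =\ \ker(\pi_{\cQ}\circ\iota)\ =\ \iota^{-1}\bigl(\K(\H)\bigr).
\]
Now I invoke Proposition~\ref{P:compact}: since $\iota$ is faithful, $\iota(\A)$ contains no nonzero compact operator, so $\iota^{-1}(\K(\H))=0$. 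Thus $\ker\rho=0$, as wanted.

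The only real content is the appeal to Proposition~\ref{P:compact} --- that no faithful representation of $\A$ picks up nonzero compacts; the rest is formal manipulation of universal properties and the Calkin quotient. The one step deserving a little care is that $\pi_{\cQ}\circ\rho=\pi_{\cQ}\circ\iota$ on all of $\A$ rather than merely on $*$-polynomials in $x$, but this is immediate from continuity since $x$ and $I$ generate $\A$. I would also remark that the argument is symmetric in $T$ and $T+K$: writing $T=(T+K)+(-K)$ shows that, among contractions, being a universal contraction is invariant under compact perturbation.
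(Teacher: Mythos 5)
Your proposal is correct and follows essentially the same route as the paper: both arguments pass to the Calkin algebra, use that $\pi_{\mathcal Q}(q(T))=\pi_{\mathcal Q}(q(T+K))$, and invoke Proposition \ref{P:compact} to conclude that the Calkin quotient does not decrease norms on $C^*(T,I)$. The paper phrases this directly as an inequality $\|q(T+K)\|\geq\|\pi_{\mathcal Q}(q(T))\|=\|q(T)\|$ rather than via faithfulness of the representation $x\mapsto T+K$, but the content is identical.
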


\begin{proof}
Let $T$ and $K$ be as stated, let $q$ be a noncommutative *-polynomial, and let $\pi:B(\H)\to B(\H)/K(\H)$ be the Calkin map.  It follows from Proposition \ref{P:compact} that the restriction of $\pi$ to $C^*(T,I)$ is isometric. Since $\pi(q(T))=q(\pi(T)) = q(\pi(T+K)) = \pi(q(T+K))$, we have
$$\|q(T+K)\| \geq \|\pi(q(T+K))\| = \|\pi(q(T))\| = \|q(T)\|$$
and are done by Proposition \ref{P:norm}.
\end{proof}

\begin{theorem} \label{T:prim}
The algebra $\A$ is primitive.  Equivalently, there exist irreducible universal contractions.
\end{theorem}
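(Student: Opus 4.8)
The plan is to mimic Choi's argument for the primitivity of $C^*(\F_2)$ \cite[Theorem 6]{Cho80}, using the extra leverage that $\A$ is RFD (Proposition \ref{P:hadwin}) and that a compact perturbation of a universal contraction is still universal (Lemma \ref{L:perturb}). Concretely, I would start from the direct-sum model: let $T = \bigoplus_n T_n$ act on $\H = \bigoplus_n \H_n$, where $T_n$ ranges over a countable dense subset of $(\M_{d_n})_{\leq 1}$ hitting each matrix size infinitely often, so that $T$ is a universal contraction by Proposition \ref{P:hadwin}(2). Since $\A$ is separable, fix a countable dense subset $\{a_k\}$ of $\A$, with corresponding elements $q_k(T) \in C^*(T,I)$. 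Because each $T_n$ is a summand and the restriction of the representation to $\bigoplus_{m \le N}\H_m$ is a finite-dimensional representation of $\A$, the RFD property guarantees that for each $k$ we can find a summand $\H_{n(k)}$ on which $\|q_k(T_{n(k)})\|$ is as close as we like to $\|q_k(T)\| = \|a_k\|$. The goal is then to conjugate $T$ by a unitary $U$, chosen so that $UTU^*$ is irreducible, while changing $T$ only by a compact operator; by Lemma \ref{L:perturb} the result is still a universal contraction, and conjugation does not change the generated $C^*$-algebra, so the representation stays faithful — and now irreducible.

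The mechanism for forcing irreducibility is the standard one: arrange the blocks so that each block $\H_{n(k)}$ witnessing $a_k$ is connected to the rest of the space by a rank-one (or finite-rank) "link" that no nonzero reducing subspace can respect, and verify that any operator commuting with the perturbed $UTU^*$ must be scalar. More precisely, one builds $K$ as a norm-convergent sum $\sum_k \varepsilon_k E_k$ of small finite-rank operators $E_k$, where $E_k$ adds an off-diagonal entry linking $\H_{n(k)}$ to a fixed reference vector; choosing $\varepsilon_k \to 0$ fast enough keeps $K$ compact and, together with a shrinking of the $T_n$'s by factors $(1-\delta_n)$ to create room, keeps $T + K$ (or a unitary conjugate) a contraction. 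One then shows directly that a projection $P$ commuting with $T+K$ must commute with each block and with each link, and the links propagate "$P$ acts as the same scalar" across all blocks, forcing $P \in \{0,I\}$. That irreducibility computation — organizing the links and the norm bookkeeping so that the perturbation is simultaneously compact, contractive after conjugation, and irreducibility-forcing — is the step I expect to be the main obstacle; everything else (faithfulness via RFD, universality via Lemma \ref{L:perturb}, the equivalence with "irreducible universal contraction exists" via Proposition \ref{P:norm}) is soft.

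An alternative, possibly cleaner route avoids building $K$ by hand: realize $\A$ first as a concrete RFD algebra with a convenient dense set of "block-diagonal" witnesses as above, then invoke a general principle (as in Choi's paper and its successors) that a separable RFD — or merely separable, with no nonzero compacts in some faithful representation — unital $C^*$-algebra admitting enough finite-dimensional "room" has a faithful irreducible representation, obtained by a Hadwin-type approximate-unitary-equivalence or a direct integral/limit argument. Since Proposition \ref{P:compact} already tells us $\pi(\A)$ contains no nonzero compacts for any faithful $\pi$, the irreducible representation we produce cannot collapse any $a_k$, so faithfulness is automatic once we know the representation is a genuine (faithful) one. Either way, the deliverable is a single irreducible operator $T'$ on a separable Hilbert space with $C^*(T',I) \cong \A$ via $x \mapsto T'$, which by Proposition \ref{P:norm} is exactly an irreducible universal contraction; equivalently, $\A$ is primitive.
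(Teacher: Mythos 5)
Your overall strategy --- realize a universal contraction as a direct sum of a dense family of matrix contractions (Proposition \ref{P:hadwin}(2)), perturb it compactly into an irreducible operator, and invoke Lemma \ref{L:perturb} to retain universality --- is exactly the paper's, and the soft parts (faithfulness, the equivalence of primitivity with the existence of an irreducible universal contraction) are handled correctly. The genuine gap is that you leave unresolved precisely the step that carries the proof, and the mechanism you sketch for it would fail as stated: you assert that a projection $P$ commuting with $T+K$ ``must commute with each block and with each link,'' but a commuting projection has no a priori reason to respect the block decomposition --- that is the entire difficulty. The paper's resolution is the Radjavi--Rosenthal criterion used by Choi \cite[Theorem 6]{Cho80}, \cite{RR73}: if $A$ is diagonal with \emph{distinct} entries and $B$ has no zero entries in its first column, then $A$ and $B$ have no common nontrivial reducing subspace. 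One applies this to $A=\mathrm{Re}\,T''$ and $B=\mathrm{Im}\,T''$, where $T''$ is obtained from $T$ by first conjugating each block so its real part is diagonal, then perturbing the diagonal entries to make them pairwise distinct (this is what forces a commuting projection to be diagonal), and finally perturbing the first row and column so that $\mathrm{Im}\,T''$ has an everywhere-nonzero first column (which forces the diagonal projection to be constant). Both perturbations are compact and are kept contractive by building in norm slack $\|T_j\|<1-\frac{1}{j}$ from the start. Your ``rank-one links to a reference vector'' are essentially the second perturbation, but without the distinct-diagonal step they force nothing.

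Your proposed alternative route is not available: there is no general principle that a separable RFD algebra, or a separable algebra admitting a faithful essential representation, is primitive. For instance $\C\oplus\C$ is RFD and has a faithful representation whose image contains no nonzero compact operators, yet it is not primitive (it is not even prime, and for separable $C^*$-algebras primitivity is equivalent to primeness). Residual finite dimensionality is if anything in tension with primeness, which is why the faithful irreducible representation really must be constructed by hand as above.
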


\begin{proof}

Choi used the following lemma, based on techniques of Radjavi-Rosenthal \cite[Theorem 7.10 and Theorem 8.30]{RR73}.  Let $A$ and $B$ be operators on the same Hilbert space, expressed as matrices with respect to a basis.  If $A$ is diagonal with distinct entries, and $B$ has no zero entries in its first column, then $A$ and $B$ have no common nontrivial reducing subspace.  We will find a universal contraction whose real and imaginary parts can be taken as $A$ and $B$ above.  Since a projection commutes with an operator if and only if it commutes with the real and imaginary parts, the contraction will have no nontrivial reducing subspaces. 

As in Proposition \ref{P:hadwin}, construct a universal contraction $T = \oplus_{j=1}^\infty T_j$ on $\H=\oplus_{j=1}^\infty \C^{n_j}$ as a direct sum of matrices $T_j\in \mathbb{M}_{n_j}$.  We may choose the matrices $T_j$ to be strict contractions, and we may choose the ordering so that $\|T_j\| < 1 - \frac1j$. By taking unitary conjugates we may also assume that the real part of each $T_j$ is diagonal.

For each $j \geq 1$, perturb the diagonal entries of $\text{Re }T_j$ by less than $\frac{1}{2j}$ so that the entries are distinct from each other and all diagonal entries of $T_{i}$ for $i<j$. 
This perturbs $T$ to $T'$ so that $\text{Re }T'$ is diagonal with distinct entries, and the corresponding summands have $\|T'_j\| < 1 - \frac{1}{2j}$. Because this is a compact perturbation, $T'$ is still a universal contraction by Lemma \ref{L:perturb}.

Now perturb $T'$ to $T''$ as follows. We only change the first column and row so that these are everywhere nonzero in $\text{Im }T''$. We go one block at a time, taking advantage of the little bit of norm wiggle room. In the first block $T'_1$, change all entries in the first column and row by some small identical pure imaginary amount $\lambda_1$, making them all nonzero and keeping $\|T''_1\| < 1 - \frac13$. Then change the part of the first column and row of $T'$ corresponding to the second block - these are all zero in $T'$ - by a small identical pure imaginary amount $\lambda_2$, making them all nonzero and preserving that the submatrix of $T''$ corresponding to the first two blocks has norm $< 1 - \frac16$. Continue in this way for all blocks $T'_j$. Visually, we are perturbing $T'$ by the operator 
$$R=\begin{pmatrix} \lambda_1 & \lambda_1 & \cdots & \lambda_1 & \lambda_2 & \cdots \lambda_n & \cdots\\
\lambda_1 &&&&&&&\\
\vdots &&&&&&&\\
\lambda_1 &&&&&&&\\
\lambda_2 &&&&\text{\huge0}&&&\\
\vdots &&&&&&&\\
\lambda_n &&&&&&&\\
\vdots\\
\end{pmatrix}$$
where each $\lambda_j\in i\mathbb{R}$ is repeated $n_j$ times.

The submatrix of $T''=T'+R$ corresponding to the initial string of $n$ blocks has norm $< 1 - \frac{1}{3n}$.  The operator $T''$ is a strong limit of these submatrices (considered as infinite-dimensional operators by filling in the rest of the matrix with zeroes), so $T''$ is also a contraction. And because we have only changed the first row and column to go from $T'$ to $T''$, it is a compact perturbation, and $T''$ is still a universal contraction by Lemma \ref{L:perturb}.

Notice that the matrix for $\text{Re }T'' = \text{Re }T'$ is diagonal with distinct entries, and the matrix for $\text{Im }T''$ has nonzero entries in the first column.  By the result mentioned at the beginning of the proof, we are done.
\end{proof}

It is apparent from the preceding results that separably-acting universal contractions need not be unitarily equivalent, but the truth is not so far from that.  Recall that two operators are said to be \textit{approximately unitarily equivalent} (a.u.e.), denoted here $\sim_a$, if one is the norm limit of unitary conjugates of the other.  Two representations of the same $C^*$-algebra are a.u.e. if one representation is the point-norm limit of unitary conjugates of the other.

Parts of the next two propositions are noted or implied in Hadwin's work on Voiculescu's noncommutative Weyl-von Neumann theorem \cite{Had77,Had78,Had81} and the subsequent discussion in \cite[Section 7]{CH}.  We include some citations but make all the reasoning explicit here.

\begin{proposition} \label{P:aue}
Separably-acting universal contractions form a single a.u.e. class.
\end{proposition}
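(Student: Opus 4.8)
The plan is to identify the statement with a corollary of Voiculescu's noncommutative Weyl--von Neumann theorem, in the form developed by Hadwin, applied to the unital $C^*$-algebra $\A$. First I would recall that if $T_1$ and $T_2$ are both universal contractions acting separably, then by Proposition \ref{P:norm} the assignments $T_1 \mapsto T_2$ and $T_2 \mapsto T_1$ define mutually inverse $*$-isomorphisms $C^*(T_1,I) \simeq \A \simeq C^*(T_2,I)$; thus the question of a.u.e. of the operators becomes the question of a.u.e. of two faithful unital representations $\pi_1, \pi_2$ of the single separable $C^*$-algebra $\A$ on separable Hilbert spaces. So the real content is: \emph{any two faithful separable unital representations of $\A$ are approximately unitarily equivalent.}

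Next I would invoke Voiculescu's theorem in the following guise (Hadwin's version, \cite{Had77,Had81}): for a separable unital $C^*$-algebra, two unital representations $\pi_1,\pi_2$ are a.u.e. if and only if $\ker \pi_1 = \ker \pi_2$ and $\mathrm{rank}\,\pi_1(a) = \mathrm{rank}\,\pi_2(a)$ for every $a \in \A$ (where rank is understood as a cardinal, counting finite ranks exactly and lumping all infinite ranks together). Since $\pi_1,\pi_2$ are faithful, the kernel condition is automatic. For the rank condition, the key input is Proposition \ref{P:compact}: a faithful representation of $\A$ contains \emph{no} nonzero compact operators, so for any $a \in \A$ with $\pi_i(a) \neq 0$, the operator $\pi_i(a)$ has infinite rank, while if $a = 0$ both have rank zero. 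Hence $\mathrm{rank}\,\pi_1(a)$ and $\mathrm{rank}\,\pi_2(a)$ agree for all $a$ (both are $0$ or both are $\aleph_0$), and Voiculescu's theorem gives $\pi_1 \sim_a \pi_2$. Unwinding, the unitaries conjugating $\pi_1$ to $\pi_2$ send $T_1 = \pi_1(x)$ to operators converging in norm to $T_2 = \pi_2(x)$, i.e. $T_1 \sim_a T_2$. Finally, every separably-acting universal contraction arises as $\pi(x)$ for some faithful separable unital representation $\pi$ of $\A$ (its generated $C^*$-algebra is a copy of $\A$ with $x \mapsto T$), and conversely every such $\pi(x)$ is a universal contraction by Proposition \ref{P:norm}; so the class is exactly one a.u.e. class, and it is nonempty by Proposition \ref{P:hadwin}.

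The main obstacle is purely one of bookkeeping around the rank function: one must be careful that Voiculescu's theorem is being applied in a form that genuinely characterizes a.u.e. (as opposed to approximate \emph{equivalence} or quasicontainment), and that the hypotheses — separability of $\A$, separability of the Hilbert spaces, and the matching of the ``generalized rank'' invariant — are all verified. The no-compacts conclusion of Proposition \ref{P:compact} is exactly what collapses this invariant to something trivial, so once that proposition is in hand the argument is short. A secondary point worth a sentence is that one should exclude the degenerate case of a zero-dimensional or otherwise non-faithful representation — but any operator that is a universal contraction automatically generates a faithful copy of $\A$, so this does not arise.
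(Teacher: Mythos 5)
Your argument is essentially the paper's: both reduce to the fact that a universal contraction generates a faithful representation of $\A$ whose range contains no nonzero compact operators (Proposition \ref{P:compact}), and then invoke Voiculescu's noncommutative Weyl--von Neumann theorem to conclude that any two such faithful essential representations of the separable algebra $\A$ are approximately unitarily equivalent; the paper cites \cite[Corollary 1.4]{Voi76} directly where you use Hadwin's rank-function formulation, but this is the same tool and your verification that the rank invariant collapses is correct. The one loose end is the claim that the universal contractions constitute an \emph{entire} a.u.e.\ class rather than merely lying inside one: your closing sentence only observes that images of the generator under faithful representations are universal, which does not by itself show that an arbitrary contraction $S$ with $S \sim_a T$, $T$ universal, is again universal. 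The paper closes this with one line: $S \sim_a T$ gives $q(S) \sim_a q(T)$ and hence $\|q(S)\| = \|q(T)\|$ for every noncommutative *-polynomial $q$, so Proposition \ref{P:norm} applies.
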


\begin{proof}
Let $T_1$ and $T_2$ be separably-acting universal contractions.  By sending the distinguished generator of $\A$ to $T_j$, we obtain two faithful representations of $\A$ whose ranges contain no nontrivial compact operators by Proposition \ref{P:compact}.  It then follows from Voiculescu's noncommutative Weyl-von Neumann theorem \cite[Corollary 1.4]{Voi76} that the representations are a.u.e., which means that $T_1 \sim_a T_2$.

If $S_1 \sim_a S_2$, then for any noncommutative *-polynomial $q$ we have $q(S_1) \sim_a q(S_2)$ and $\|q(S_1)\| = \|q(S_2)\|$.  So any operator a.u.e. to a universal contraction must also be one by Proposition \ref{P:norm}.
\end{proof}

\begin{proposition} \label{P:s*}
For $T \in B(\ell^2)_{\leq 1}$, the following are equivalent.
\begin{enumerate}
    \item $T$ is a universal contraction.
    \item $T \sim_a (T \oplus S)$ for every $S \in B(\ell^2)_{\leq 1}$.
    \item The strong* closure of the unitary orbit of $T$ is $B(\ell^2)_{\leq 1}$.
\end{enumerate}
\end{proposition}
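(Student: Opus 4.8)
The plan is to prove the three implications $(1) \Rightarrow (2) \Rightarrow (3) \Rightarrow (1)$, leveraging the approximate-unitary-equivalence technology already assembled. For $(1) \Rightarrow (2)$: if $T$ is a universal contraction and $S$ is any contraction on $\ell^2$, then $T \oplus S$ is a contraction; since $T$ is universal, $\|q(T)\| = \sup_R \|q(R)\|$ for every noncommutative *-polynomial $q$, and in particular $\|q(T \oplus S)\| \leq \|q(T)\|$, while the reverse inequality is automatic from $T \oplus S \mapsto T$ (compress to the first summand, or just use $\|q(T)\| = \sup_R\|q(R)\| \ge \|q(T\oplus S)\|$ being already an equality of suprema). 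Hence $T \oplus S$ is also a universal contraction, so by Proposition~\ref{P:aue} it is a.u.e. to $T$, giving $T \sim_a (T \oplus S)$.

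For $(2) \Rightarrow (3)$: fix an arbitrary $S \in B(\ell^2)_{\leq 1}$ and any finite-rank ``test data'' — a finite set of unit vectors and an $\varepsilon > 0$. By hypothesis there is a sequence of unitaries $U_k$ with $U_k^*(T \oplus S)U_k \to T$ in norm, hence certainly strong*. Now $T \oplus S$ has an obvious copy of $S$ sitting as a compression to an invariant-under-nothing-in-particular but reducing subspace, and one checks that $S$ itself lies in the strong* closure of the unitary orbit of $T \oplus S$: conjugate $T \oplus S$ by unitaries that ``slide'' the $S$-summand into a fixed finite block while pushing the $T$-part off to infinity, so that on the relevant finite set of vectors the conjugate agrees with $S$ up to $\varepsilon$ in strong* norm. (More cleanly: $S \oplus T \oplus S \oplus T \oplus \cdots$ is unitarily equivalent to $T \oplus S \oplus T \oplus \cdots$, which by $(2)$ applied iteratively is a.u.e.\ to $T$; and $S$ is a strong* limit of compressions of $S \oplus T \oplus \cdots$ to coordinate subspaces. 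One must be slightly careful to pass strong*-limits through the a.u.e.) Since $S$ was arbitrary, the strong* closure of the unitary orbit of $T$ contains all of $B(\ell^2)_{\leq 1}$; the reverse containment is trivial because the unit ball is strong* closed and unitarily invariant.

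For $(3) \Rightarrow (1)$: this is the easy direction. Given $(3)$, every contraction $S \in B(\ell^2)_{\leq 1}$ is a strong* limit of unitary conjugates $U_k^* T U_k$ of $T$. For any noncommutative *-polynomial $q$ we then have $q(U_k^* T U_k) = U_k^* q(T) U_k \to q(S)$ strong*, so by strong* lower semicontinuity of the norm, $\|q(S)\| \leq \liminf \|U_k^* q(T) U_k\| = \|q(T)\|$. Taking the supremum over all $S$ (it suffices to use separably-acting $S$, as in Proposition~\ref{P:hadwin}) gives $\sup_S \|q(S)\| \le \|q(T)\|$, and the reverse inequality is trivial since $T$ is itself a contraction; hence equality holds and $T$ is a universal contraction by Proposition~\ref{P:norm}.

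The main obstacle is the step $(2) \Rightarrow (3)$, specifically the bookkeeping needed to extract $S$ (and then arbitrary strong*-limits built from copies of $S$ and $T$) from the unitary orbit of $T$ using only that $T \sim_a T \oplus S$. The cleanest route is probably: iterate $(2)$ to get $T \sim_a T \oplus S_1 \oplus S_2 \oplus \cdots$ for any countable family $\{S_i\}$ of contractions (using that a.u.e.\ is transitive and well-behaved under the relevant direct-sum manipulations, plus a diagonal argument to handle the countable sum), choose the $S_i$ so that their direct sum has every contraction in its strong* orbit closure — e.g.\ take $\{S_i\}$ to include a strong*-dense sequence of finite matrices as in Proposition~\ref{P:hadwin} — and then observe that strong* orbit-closure only enlarges under a.u.e. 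I would want to state and use a small lemma to the effect that if $A \sim_a B$ then the strong* closures of their unitary orbits coincide, which makes the argument essentially formal once the right countable direct sum is in hand.
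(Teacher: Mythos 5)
Your overall strategy is sound and two of the three implications are essentially the paper's: your $(3)\Rightarrow(1)$ is verbatim the paper's argument (strong* lower semicontinuity of the norm applied to $U_k^*q(T)U_k$), and your $(1)\Rightarrow(2)$ is a clean, self-contained variant --- you observe that $q(T\oplus S)=q(T)\oplus q(S)$ forces $T\oplus S$ to be universal and then invoke Proposition \ref{P:aue}, whereas the paper instead shows that the operators satisfying (2) form a single a.u.e.\ class and anchors it to the class of universal contractions via Hadwin's observation that the model operator of Proposition \ref{P:hadwin}(2) satisfies (2). The real divergence is $(2)\Rightarrow(3)$. The paper disposes of it in one line by citing Hadwin's theorem that for \emph{any} $T\in B(\ell^2)$ the strong* closure of the unitary orbit of $T$ is precisely the set of summands of operators a.u.e.\ to $T$; given (2), every contraction $S$ is a summand of $T\oplus S\sim_a T$ and we are done. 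You instead attempt to reprove the relevant half of that theorem by hand, and this is where your write-up is still a sketch rather than a proof: the assertions that ``one checks that $S$ lies in the strong* closure of the unitary orbit of $T\oplus S$,'' that strong* limits ``pass through'' a.u.e., and that strong* orbit closure ``only enlarges under a.u.e.'' are exactly the content of the missing lemma, and the finite-rank bookkeeping (choose a finite-dimensional $F$ with $P_F\xi_i\approx\xi_i$, $P_FSP_F\xi_i\approx S\xi_i$, $P_FS^*P_F\xi_i\approx S^*\xi_i$, then conjugate by a unitary carrying $F$ onto the corresponding subspace of the $S$-summand) needs to be written out. It can be completed, and in fact more simply than you propose: there is no need to iterate (2) over a countable family with a diagonal argument, since $S\oplus S\oplus\cdots$ (or the direct sum of a dense sequence of matrix contractions) is itself a single contraction on $\ell^2$ to which (2) applies directly. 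So: your route buys a self-contained proof at the cost of rederiving a known result of Hadwin; the paper's route buys brevity at the cost of an external citation. Either finish the lemma carefully or cite \cite{Had77} as the paper does.
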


\begin{proof}
(1) $\iff$ (2): We first show that the operators on $\ell^2$ satisfying the condition in (2) form a single a.u.e. class.  If $T_1, T_2$ satisfy the condition, then $$T_1 \sim_a T_1 \oplus T_2 \sim_a T_2.$$  And if $T \sim_a T_1$ and $S \in B(\ell^2)_{\leq 1}$, then $$T \oplus S \sim_a T_1 \oplus S \sim_a T_1 \sim_a T.$$

The conclusion then follows from Proposition \ref{P:aue} and Hadwin's observation \cite[Example 7.3(1)]{Had81} that a universal contraction constructed in Proposition \ref{P:hadwin}(2) satisfies the condition in (2).

(2) $\Rightarrow$ (3): Hadwin shows in \cite[Corollary 3.4 and Theorem 4.3]{Had77} that for \textit{any} $T \in B(\ell^2)$, the strong* closure of the unitary orbit of $T$ is the set of operators that are summands of operators a.u.e. to $T$.

(3) $\Rightarrow$ (1): Assume (3), and let $S$ be any contraction on $\ell^2$.  Let $\{U_j\}$ be unitaries with $U_j^* T U_j \to S$ strong*, and let $q$ be a noncommutative *-polynomial.  Then
\begin{align*}
\|q(S)\| &= \|q(s^*-\lim (U_j^* T U_j))\| = \|s^*-\lim q(U_j^*TU_j)\| \\ &=\|s^*-\lim U_j^*q(T)U_j \| \leq \liminf \|U_j^* q(T) U_j\| = \|q(T)\|.
\end{align*}
Thus $T$ is universal by Proposition \ref{P:norm}.
\end{proof}

\begin{remark} \label{R:model}
``Universality" has various meanings for operators, some connected to our situation and some not.  In the context of operator ideals it is a factorization property (e.g., \cite{O16}) -- this is quite different from the present paper.  

In the definition perhaps most familiar to an operator theorist, a Hilbert space operator $T$ is said to be universal, or be a \textit{universal model}, if every separably-acting operator can be scaled to become similar to the restriction of $T$ to some invariant subspace.  See \cite[Chapter 6]{K97} for a discussion of the area.  
The terminology originates with Rota \cite{R1959}, who proved in \cite{R1960} that $B$, the backward shift with infinite multiplicity, is a universal model in a slightly more precise sense: any operator with spectral radius $< 1$ is similar to the restriction of $B$ to an invariant subspace.  Proposition \ref{P:s*}(2) characterizes the universality of $T$ (as defined in this paper) analogously: any operator with norm $\leq 1$ is unitarily equivalent to the restriction of an operator a.u.e. to $T$ to a reducing subspace.
\end{remark}

Operator theorists may be tempted to look for universal contractions among the weighted shifts, but that cannot succeed because weighted shifts are \textit{centered} \cite{MM74}; they satisfy relations saying that the family $\{T^mT^{*m}, T^{*n}T^n : \, m,n \geq 0\}$ is commutative.  In fact the authors know of no explicit ``naturally-occurring" universal contractions.  Our constructions all rely in some way on the direct sum of a dense set.

\section{Von Neumann-type inequalities for noncommutative *-polynomials}\label{max}


Let $M_z$ denote multiplication by $z$ on $L^2(\T,m)$, and let $p$ be a polynomial.  From compactness, the maximum modulus principle, and spectral theory, we have 
\begin{equation} \label{E:vN}
[\sup \text{ or } \max]_{\bar{\D}} |p(\lambda)| = [\sup \text{ or } \max]_\T |p(\lambda)|  = \|p(M_z)\|.
\end{equation}
The celebrated \textit{von Neumann inequality} \cite{vN} is the fact that for any contraction $S$, the norm of $p(S)$ is dominated by the quantity in \eqref{E:vN}.

We may then say that $M_z$ is a universal contraction for polynomials.  (In fact a contraction has this property if and only if its spectrum contains $\T$.)\footnote{The backward implication holds because the operator norm is not smaller than the spectral radius.  For the forward implication, let $\lambda \in \T$ and $T$ be a contraction with this property.  The spectral radius of $(T+\lambda)$ is $\lim \|(T+\lambda)^n\|^{1/n} = \lim \|(z+\lambda)^n\|_{C(\T)}^{1/n} = 2$, which forces $\lambda \in \text{sp}(T)$.}  Equivalently, $M_z$ generates the universal unital operator algebra\footnote{Here an \textit{operator algebra} is a (not necessarily self-adjoint) norm-closed subalgebra of some $B(\H)$, or any matrix-normed Banach algebra that can be represented as such.} of a contraction, which is isometrically isomorphic as a Banach algebra to the disk algebra $A(\D)$ via the map densely defined by $p \leftrightarrow p(M_z)$ \cite[Example 2.2]{BP91}.  
All of the foregoing is still true if we replace $p$ with any $f \in A(\D)$, and so von Neumann's inequality opens the door for an analytic functional calculus for all contractions.  (For any contraction $S$, $f(S)$ is well-defined by von Neumann's inequality as the limit of $p_j(S)$, where $\{p_j\}$ is any sequence of polynomials converging to $f$ in $A(\D)$.)

In summary, von Neumann's inequality says that there are sufficiently many one-dimensional representations of the universal unital operator algebra of a contraction to determine the norm of a polynomial in the generator (and thus we can identify the algebra with $A(\D)$ and the one-dimensional representations with $\bar{\D}$).  In fact the norm is achieved.  The smaller set $\T$ suffices to determine the norm, which is still achieved.  For any element of $A(\D)$ the norm is achieved at a one-dimensional representation.

We pursue analogues of every clause in the preceding paragraph, in the context of noncommutative *-polynomials and $C^*$-algebras, and call these analogues (when valid) ``von Neumann-type inequalities."

Let $x$ be the canonical generator of $\A$, $S$ any contraction, and $q$ a noncommutative *-polynomial.  By definition we have that $\|q(S)\| \leq \|q(x)\|$.  We typically cannot determine $\|q(x)\|$ with the one-dimensional representations of $\A$ (i.e., replacing $x$ with scalars): consider $q(z) = z^*z - zz^*$.  But the finite-dimensional representations of $\A$ do suffice (i.e., replacing $x$ with contractive matrices).  We already proved this in Proposition \ref{P:hadwin}(1), which may thus be considered a von Neumann-type inequality:
\begin{equation} \label{E:*vN}
\|q(S)\| \leq \|q(x)\| = \sup_{M \text{ contractive matrix}} \|q(M)\|.
\end{equation}

Here are our questions.  Note that there is no compactness available.

\begin{enumerate}
\item Is the supremum in \eqref{E:*vN} achieved?
\item Can we replace the contractive matrices with a smaller natural set?  And will the supremum still be achieved?
\item Since noncommutative *-polynomials in $x$ form a dense set in $\A$, it follows from the above that for any $a \in \A$, $\|a\| = \sup_\pi  \|\pi(a)\|$, as $\pi$ ranges over the finite-dimensional representations of $\A$.  Is this supremum achieved?
\end{enumerate}

Here are the answers, the first and third 
of which we proceed to show in the remainder of this section.
\begin{enumerate}
\item Yes (Theorem \ref{FNT}).
\item Contractive nilpotent matrices suffice, but the supremum is not achieved in general (Theorem \ref{nilprep} and Remark \ref{notnilp}).
\item No, it is not achieved for all elements of $\A$, but it is for some elements that are not of the form $q(x)$ (Proposition \ref{P:AF}).
\end{enumerate}

In \cite[Theorem 3.2]{CS} it was shown that a $C^*$-algebra is RFD exactly when it has a dense subset of elements that attain their norm under a finite-dimensional representation of the algebra. In some cases, this dense subset contains all noncommutative *-polynomials in the standard generators. For instance, Fritz, Netzer, and Thom prove in \cite[Lemma 2.7]{FNT} that every element in $\C\F_n$ attains its norm under some finite-dimensional representation of $C^*(\F_n)$, where $\F_n$ is a free group on $n \leq \infty$ generators. 
Our proof of Theorem \ref{FNT} below is a simplified version of the proof of \cite[Lemma 2.7]{FNT}, which itself is an adaptation of Choi's argument in \cite[Theorem 7]{Cho80} that $C^*(\F_n)$ is RFD.

The \textit{degree} of a noncommutative *-polynomial is the length of its longest monomial.


\begin{theorem}\label{FNT}
Let $q$ be a noncommutative *-polynomial of degree $d$, and let $x$ be the canonical generator of $\A$. Then 
\begin{equation} \label{E:Mn}
\|q(x)\|=\max\{\|q(M)\|:M \in \mathbb{M}_{2^{d+1}}, \|M\|\leq 1\}.
\end{equation}
\end{theorem}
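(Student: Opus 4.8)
The plan is to run a compression argument in the spirit of Choi \cite{Cho80} and Fritz--Netzer--Thom \cite{FNT}. By Proposition \ref{P:hadwin}(1) we already know $\|q(x)\|=\sup_N\|q(N)\|$ as $N$ ranges over \emph{all} contractive matrices. Since every contractive $M\in\M_{2^{d+1}}$ induces a unital $*$-homomorphism $\A\to C^*(M,I)$ sending $x$ to $M$, and $*$-homomorphisms are contractive, the inequality $\max\{\|q(M)\|:M\in\M_{2^{d+1}},\ \|M\|\le1\}\le\|q(x)\|$ is automatic. Moreover $\{M\in\M_{2^{d+1}}:\|M\|\le1\}$ is compact and $M\mapsto\|q(M)\|$ is continuous, so once we show the reverse inequality for the supremum it is automatically attained. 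Thus it suffices to prove: for every contractive matrix $N$ there is a contractive matrix $M$ of size at most $2^{d+1}$ with $\|q(M)\|\ge\|q(N)\|$.

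So fix a contractive $N$ acting on some $\C^n$ and (using that on a finite-dimensional space the operator norm is attained) a unit vector $\xi\in\C^n$ with $\|q(N)\xi\|=\|q(N)\|$. Write $q=\sum_i c_i w_i$, where each $w_i$ is a word of length at most $d$ in two noncommuting letters $z,z^*$, and let
$$V=\operatorname{span}\{\,w(N,N^*)\xi:\ w\text{ a word in }z,z^*\text{ of length}\le d\,\}.$$
There are $\sum_{k=0}^d 2^k=2^{d+1}-1$ such words, so $\dim V\le 2^{d+1}-1\le 2^{d+1}$; also $\xi\in V$ (empty word) and $q(N)\xi\in V$. Let $P$ be the orthogonal projection of $\C^n$ onto $V$ and set $M:=PNP|_V\in B(V)$, a contraction since $N$ is one.

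The heart of the matter is the compression identity $q(M)\xi=q(N)\xi$. For a word $w=\ell_1\cdots\ell_m$ with $m\le d$ and $\ell_j\in\{z,z^*\}$, writing $L_j$ for the corresponding operator $N$ or $N^*$, one has $w(M,M^*)\xi=PL_1PL_2P\cdots PL_mP\xi$. Every \emph{interior} projection here is applied to a vector $L_jL_{j+1}\cdots L_m\xi$ with $j\ge2$, which is a word of length $\le d-1$ applied to $\xi$ and hence lies in $V$; and $P\xi=\xi$. So each interior $P$ acts as the identity, giving $w(M,M^*)\xi=P\,w(N,N^*)\xi$. Summing against the coefficients $c_i$ and using $q(N)\xi\in V$ yields $q(M)\xi=P\,q(N)\xi=q(N)\xi$, hence $\|q(M)\|\ge\|q(M)\xi\|=\|q(N)\xi\|=\|q(N)\|$. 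Padding $M$ with a zero block up to size exactly $2^{d+1}$ only adjoins a block equal to $q(0)$ and does not decrease the norm, which completes the reduction; as noted, compactness then upgrades the supremum in \eqref{E:Mn} to a maximum.

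The only delicate point I expect is the bookkeeping in the compression identity: one must be careful to identify precisely which projections are "interior" and to verify that the word-length filtration built into $V$ is exactly what makes them disappear, together with the elementary count of spanning words that lands at $2^{d+1}-1\le 2^{d+1}$. The rest — the reduction via Proposition \ref{P:hadwin}(1), the trivial direction, the compactness argument, and the zero-padding — is routine.
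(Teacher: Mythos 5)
Your proof is correct and uses essentially the same argument as the paper: compress to the span of all *-monomials of length at most $d$ applied to a norm-attaining vector, check that the interior projections act as the identity on that span, and pad with a zero block to reach size exactly $2^{d+1}$. The only cosmetic difference is that the paper produces the norm-attaining vector from a GNS representation of $\A$ for a state maximizing $q(x)^*q(x)$, whereas you first reduce to matrices via Proposition \ref{P:hadwin}(1) and then use compactness of the unit ball of $\M_{2^{d+1}}$ to upgrade the supremum to a maximum; both routes are sound.
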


\begin{proof}
 Let $\varphi$ be a state on $\A$ with $\varphi(q(x)^*q(x)) = \|q(x)\|^2$, and let $(\pi, \H, \xi)$ be the associated GNS representation.  Then $\|q(x)\|=\|\pi(q(x))\xi\|$. Define $$\H_0=\text{span}\{\pi(g(x))\xi: g \ \text{is a *-monomial of length}\ \ell(g)\leq d\}.$$
Note that dim$(\H_0)\leq 2^{d+1}$. Let $V$ be the inclusion of $\H_0$ in $\H$, so that $VV^*$ is the projection in $B(\H)$ onto $\H_0$.  By construction 
\begin{equation} \label{E:H}
V q(V^*\pi(x)V) V^*\xi = q(\pi(x))\xi,
\end{equation}
since $VV^*$ acts as the identity everywhere in the expansion of the left-hand side.

We can think of $V^*\pi(x)V \in B(\H_0)$ as a contractive matrix of size $\leq 2^{d+1}$, so the assignment $x\mapsto V^*\pi(x)V$ induces a unital *-homomorphism $\pi_0:\A\to B(\H_0)$.  Compute
\begin{align*}
\|q(\pi_0(x))(V^*\xi)\| & \geq  \|Vq(\pi_0(x))V^*\xi\| \\ &=  \|Vq(V^*\pi(x)V)V^* \xi\| = \|q(\pi(x))\xi\| = \|\pi(q(x))\xi\|=\|q(x)\|.
\end{align*}
Thus the norm of $q(\pi_0(x))$ must be $\|q(x)\|$ (it cannot be bigger).  Finally note that $B(\H_0)$ can be (not necessarily unitally) included in $\M_{2^{d+1}}$.
\end{proof}

The reader may wonder if $\A$ has a separating family of representations of bounded (finite) dimension, so that the maximum in \eqref{E:Mn} can be taken in some fixed $\M_N$ for all $q$.  (For polynomials $N=1$ works!)  This is not so: for a $C^*$-algebra, the existence of a separating family of representations of dimension $\leq N$ implies that all irreducible representations have dimension $\leq N$ (see \cite[Proposition 3.6.3(i)]{Dix77}). 
%
Actually, it follows from \cite[Theorem 5.1]{CS} that there are elements of $\A$ whose maximal norm in $n$-dimensional representations grows according to any prescribed finite pattern in $n$.  But the methods in \cite{CS} are nonconstructive, so that one cannot exhibit such elements, and they are unlikely to be noncommutative *-polynomials in the generator.  As a complement to these results we display here explicit noncommutative *-polynomials that achieve their maximal norm at any prescribed dimension and no lower.

\begin{example}\label{E:attain}
For any $n\geq1$, consider the noncommutative *-polynomial
$$q_n(z) = (z^*z + z^{*2} z^2 + ... + z^{*n} z^n) + (1- zz^*).$$
We claim that the maximal norm of $q_n(M)$, where $M$ ranges over all contractions, is attained in $\M_{n+1}$ but no smaller matrix algebra.

We have that $q_n(M)$ is a sum of $n+1$ positive contractions, so its norm cannot be more than $n+1$.  Taking $S_n$ to be the forward shift in $\M_{n+1}$, $q_n(S_n)$ sends the first basis element $e_1$ to $(n+1) e_1$, so it has the maximal norm $n+1$.

Now suppose that $M$ is a matrix such that the positive matrix $q_n(M)$ has norm $n+1$.  Let $v$ be a unit eigenvector for $q_n(M)$ for the value $n+1$.  It follows that $v$ is fixed by all of $M^*M, \dots, M^{*n} M^n$ and annihilated by $MM^*$; this means that $M^*v = 0$ and $M^j v$ is a unit vector for $j=1,\dots n$.  We have $Mv \in \text{ran} (M) \perp \ker (M^*) \ni v$.  We also have $M^2 v \in \text{ran} (M^2) \perp \ker (M^{*2}) \ni v, Mv$.  (For the last membership, note $M^{*2}(Mv) = M^*(M^*M v) = M^*v = 0$.)  Continuing, we get that $v, Mv, M^2 v, \dots, M^n v$ form an orthonormal set of $(n+1)$ vectors on which $M$ acts as the forward shift.  Thus $M$ is a matrix of size at least $(n+1) \times (n+1)$.
\end{example}


\smallskip

Let $\A_{\mathcal{F}}$ denote the set of elements in $\A$ that achieve their norm under some finite-dimensional representation, i.e., $a\in \A_{\mathcal{F}}$ if there exists a finite-dimensional representation $\pi$ of $\A$ such that $\|a\|=\|\pi(a)\|$. Then Theorem \ref{FNT} says that the set $\mathcal{P}_*$ of noncommutative *-polynomials in $x$ is contained in $\A_{\mathcal{F}}$. It is natural to ask whether every element in $\A$ attains its norm under some finite-dimensional representation, i.e., whether or not $\A_{\mathcal{F}}$ is the entire space. The first-named author and T. Shulman explored this question for general $C^*$-algebras in \cite{CS}. As the next proposition shows, not every element of $\A$ achieves its norm under a finite-dimensional representation, but there are some elements in $\A\backslash \mathcal{P}_*$ that do. 

\begin{proposition} \label{P:AF}
 Let $\A_{\mathcal{F}}$ be defined as above. Then $$\mathcal{P}_*\subsetneq \A_{\mathcal{F}}\subsetneq \A.$$
\end{proposition}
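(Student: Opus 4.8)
The plan is to establish the two strict inclusions separately, since they require quite different ideas. The inclusion $\mathcal{P}_* \subseteq \A_{\mathcal{F}}$ is exactly Theorem \ref{FNT}, so only \emph{strictness} needs attention here. For $\A_{\mathcal{F}} \subsetneq \A$, I would exhibit a single element $a \in \A$ that does \emph{not} attain its norm in any finite-dimensional representation. For $\mathcal{P}_* \subsetneq \A_{\mathcal{F}}$, I would exhibit an element of $\A$ that is not a noncommutative $*$-polynomial in $x$ but does attain its norm finite-dimensionally; a natural candidate is something built from $q(x)$ for a polynomial $q$ via continuous functional calculus (so it lies in $C^*(q(x)) \subseteq \A$ but is manifestly not itself a $*$-polynomial), chosen so that it still attains its norm in whatever finite-dimensional representation $q(x)$ does.

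For $\A_{\mathcal{F}} \subsetneq \A$, the strategy is to find $a \in \A$ with $\|a\| = \lim_n \|\pi_n(a)\|$ for some increasing sequence of finite-dimensional representations but $\|\pi(a)\| < \|a\|$ for every finite-dimensional $\pi$. The cleanest approach uses the structure of $\A$ as (a unitization of) a universal $C^*$-algebra: take the universal contraction $T = \oplus_j T_j$ as in Proposition \ref{P:hadwin}(2), so the $\pi_j$ are the coordinate projections onto the matrix blocks (together with finite sums of these, and more generally all finite-dimensional representations). One then wants an element whose norm is a strict supremum over all finite-dimensional quotients. I would look for $a$ of the form $f(q(x))$ where $q$ is a polynomial and $f \in C(\sigma(q(x)))$ is chosen so that the norm $\|a\| = \sup_{\lambda \in \sigma(q(x))}|f(\lambda)|$ is attained only at a point $\lambda_0$ of $\sigma(q(x))$ that lies in no finite-dimensional representation's spectrum — for instance, a point of the essential spectrum that is a limit but not a value. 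Since $q(x)$ is a polynomial in a contraction, $\sigma(q(x)) = q(\overline{\D})$ by von Neumann's inequality and spectral mapping (every point of $q(\overline\D)$ is achieved by the one-dimensional representations sending $x \mapsto \lambda$), so one has to be more careful: the one-dimensional representations already see all of $q(\overline\D)$. The fix is to use a genuinely noncommutative element: pick $b = q(x)$ with $\|b\| = \sup_{M}\|q(M)\|$ a supremum that is attained (Theorem \ref{FNT}), but then apply a function $f$ vanishing on a neighborhood of the finitely-attained part of the spectrum of $b$ and peaking at $\|b\|$; the point is that in \emph{every} finite-dimensional representation $\pi$, $\|f(\pi(b))\| = \sup\{|f(\mu)| : \mu \in \sigma(\pi(b))\}$ can be made strictly smaller than $\|f(b)\| = |f(\|b\|)|$ because no finite-dimensional $\pi$ has $\|b\|$ as a singular value while retaining the relevant spectral data. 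This is the step I expect to be the main obstacle: producing an honest element whose norm is approached but not attained across \emph{all} finite-dimensional representations simultaneously, and verifying it carefully.

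An alternative, and probably the route actually intended, leverages \cite[Theorem 3.2]{CS} (cited in the excerpt): a $C^*$-algebra is RFD iff $\A_{\mathcal{F}}$ is \emph{dense}, and the companion results in \cite{CS} show $\A_{\mathcal{F}}$ need not be everything. Concretely, if $\A_{\mathcal{F}}$ were all of $\A$, then in particular every positive element would attain its norm in a finite-dimensional representation; one can rule this out by a dimension-counting or spectral-accumulation argument, using that $\A$ has irreducible representations of every finite dimension (it surjects onto all $\M_n$) and also infinite-dimensional irreducible representations (Theorem \ref{T:prim}). Choose $a$ positive whose norm-attainment would force an eigenvector configuration of unbounded size — an infinite-dimensional version of the Example \ref{E:attain} construction, e.g.\ $a = \sum_{k\geq 1} 2^{-k}(\text{shift-type positive element of "length" }k)$ assembled so that attaining $\|a\|$ requires the generator to act as an infinite forward shift, which cannot happen in finite dimensions. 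The second strict inclusion $\mathcal{P}_* \subsetneq \A_{\mathcal{F}}$ is then comparatively easy: take any nonconstant $*$-polynomial $q(x)$ with $\|q(x)\|$ attained at a contractive matrix $M_0$ via $\pi_0$ (Theorem \ref{FNT}), and let $a = u(q(x))$ for a suitable real continuous $u$ that is strictly increasing on $[0,\|q(x)^*q(x)\|]$; then $a \in C^*(q(x)) \subseteq \A$, $\|a\|$ is attained at $\pi_0$ by the spectral mapping theorem, and $a \notin \mathcal{P}_*$ provided $u$ is chosen non-polynomial (a density/degree argument, or a direct argument that $\mathcal{P}_*$ is not norm-closed while $C^*(q(x))$ contains non-polynomial functional calculus elements arbitrarily). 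The only care needed is to confirm $a$ genuinely lies outside $\mathcal{P}_*$ — for this, observe $\mathcal{P}_*$ is a countable-dimensional dense subspace not equal to $\A$, and $C^*(q(x))$ being infinite-dimensional must contain elements outside it; alternatively pick $u$ so that $a = |q(x)|$ or a spectral projection-type element and argue directly.
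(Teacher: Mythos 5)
Your proposal is essentially correct, but it takes a genuinely different route from the paper's proof, which is nonconstructive and very short: the paper cites \cite[Theorem 4.4]{CS} (the set $\A_{\mathcal{F}}$ is proper iff $\A$ has a simple infinite-dimensional AF subquotient, supplied here by the CAR algebra $\M_{2^\infty}$, which is singly generated and hence a quotient of $\A$) for the second strict inclusion, and \cite[Theorem 5.5]{CS} ($\A_{\mathcal{F}}$ is not closed under addition, whereas $\mathcal{P}_*$ obviously is) for the first. What you propose instead is to exhibit explicit witnesses for both strict inclusions, and this is precisely what the paper does \emph{after} the proposition as a complement to its nonconstructive proof: $e^{x^*x}$ for $\A_{\mathcal{F}}\setminus\mathcal{P}_*$, and, for $\A\setminus\A_{\mathcal{F}}$, the element $\sum_n 2^{-n}q_n(x)/(n+1)$ built from the polynomials of Example \ref{E:attain} -- exactly your ``weighted sum of shift-type positive elements of length $k$'' whose norm attainment would force an infinite forward shift (Example \ref{E:AF}). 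Your approach buys explicit elements at the cost of more work; the paper's buys brevity at the cost of constructiveness.

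Two loose ends in your write-up are worth tightening. First, your initial attempt for $\A\setminus\A_{\mathcal{F}}$ via $f(q(x))$ and spectral peaking does not go through (as you yourself observe, the one-dimensional representations already see all of $q(\overline{\D})$ for the relevant spectra, and for a self-adjoint $b$ every finite-dimensional representation computes $\|f(\pi(b))\|$ from $\sigma(\pi(b))\subseteq\sigma(b)$ without any obstruction); you should discard that branch and rely on the series construction, whose verification needs the \emph{strict} inequality $\|q_n(M)\|<n+1$ for $M$ of size $\le n$ from Example \ref{E:attain} together with the fact that a positive matrix attains its norm at an eigenvector. Second, your argument that $u(q(x))\notin\mathcal{P}_*$ via ``$\mathcal{P}_*$ is countable-dimensional so $C^*(q(x))$ contains something outside it'' only shows \emph{some} element of $C^*(q(x))$ works, not your chosen one; the clean fix is to evaluate at scalar contractions $x\mapsto t\in[0,1]$, where a putative identity $u(q(x))=p(x)$ with $p\in\mathcal{P}_*$ would force a non-polynomial function of $t$ to agree with a polynomial on $[0,1]$ (this is how one sees $e^{x^*x}\notin\mathcal{P}_*$).
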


\begin{proof}
By \cite[Theorem 4.4]{CS}, we know that $\A_{\mathcal{F}}\subsetneq \A$ if and only if $\A$ has a simple, infinite-dimensional AF subquotient.
The CAR algebra $\mathbb{M}_{2^\infty}$ is a simple, unital, infinite-dimensional AF $C^*$-algebra, which is singly generated by \cite{Top68}. Hence, it 
is isomorphic to a quotient of $\A$, and so $$\A_{\mathcal{F}}\subsetneq \A.$$

It also follows from \cite[Theorem 5.5]{CS} that $\A_{\mathcal{F}}$ is not closed under addition (or multiplication) 
and hence cannot equal $\mathcal{P}_*$, i.e.,
$${} \qquad \qquad \qquad 
    \mathcal{P}_*\subsetneq \A_{\mathcal{F}}. 
\qquad \qquad \qedhere$$
\end{proof}


The proof of Proposition \ref{P:AF} is again nonconstructive.  A specific element of $\A_\mathcal{F} \setminus \mathcal{P}_*$ is $e^{x^*x}$, which attains its norm of $e$ in the one-dimensional representation of $\A$ sending $x$ to 1.  More interestingly, we can use Example \ref{E:attain} to exhibit an element of $\A \setminus \A_\mathcal{F}$.

\begin{example}\label{E:AF}
For any contractive element $y$ of a $C^*$-algebra, consider the norm convergent series
$$q(y)=\sum_{n=1}^\infty 2^{-n} \frac{q_n(y)}{n+1},$$
where the $q_n$ are as in Example \ref{E:attain}.  Since $\|q_n(x)\| = n+1$, we have $\|q(x)\| \leq 1$.  Taking $S$ to be the unilateral shift on $\ell^2$ and $e_1$ the first standard basis vector, we also have $\|q(S)\| \geq \|q(S)e_1\|= \|e_1\| = 1$.  Thus $\|q(x)\|=1$.  But it follows from Example \ref{E:attain} that $\|q(M)\|<1$ for any finite-dimensional contraction $M$.
\end{example} 


\section{Universal $C^*$-algebras and projectivity}\label{Projectivity}
In this section, we consider properties of $\A$ as a universal $C^*$-algebra. Because other universal $C^*$-algebras will soon make appearances, now is a good time to introduce them formally. 
We will forgo constructions and refer the reader to those given in \cite[Chapter 3]{Lor97} or \cite[Section 1]{Bla85}. 
Though they can be defined in more generality, we will restrict ourselves to separable universal $C^*$-algebras defined by imposing norm constraints and noncommutative *-polynomial relations on the generators. 


Let $\mathcal{G}=\{x_1,...,x_n\}$ be a finite set and $\mathcal{R}$ a finite set of relations of the form 
\begin{itemize}
    \item $r(x_1,...,x_n)\geq 0$, or
    \item $\|s(x_1,...,x_n)\|\leq C$ for some $C\geq 0$
\end{itemize}
where $r$ and $s$ are noncommutative *-polynomials. To fend off existence issues, we require that the relations in $\mathcal{R}$ enforce norm bounds on the elements of $\mathcal{G}$ and that there exists some tuple $(A_1,...,A_n)$ of operators on some Hilbert space that satisfy $\R$. 
The universal $C^*$-algebra with generators $\mathcal{G}$ subject to relations $\R$ is denoted by $C^*\langle \mathcal{G}: \R\rangle$ and has the following defining universal property: given Hilbert space operators $T_1,...,T_n$ satisfying $\R$, the assignments $x_i\mapsto T_i$, $1\leq i\leq n$, induce a surjective *-homomorphism $C^*\langle \mathcal{G}: \R\rangle\to C^*(T_1,...,T_n)$. 

If $C^*\langle \mathcal{G}:\R\rangle$ has a unit, then the *-homomorphism is assumed to be unital. If it is not unital, then we denote its unitization by $C^*_u\langle \mathcal{G}: \R\rangle$; we also adopt the convention of identifying a unital $C^*$-algebra with its unitization. Note that unitizing a universal $C^*$-algebra is the same as adding a unit to the generating set, i.e., $$C^*\langle \{y_1,...,y_n\}\cup\{I\}: \R\cup\{I=I^*=I^2, y_iI=y_i=Iy_i, 1\leq i\leq n\}\rangle.$$ To see this, we faithfully represent $$C^*\langle \{y_1,...,y_n\}\cup\{I\}: \R\cup\{I=I^*=I^2, y_iI=y_i=Iy_i, 1\leq i\leq n\}\rangle$$ as $C^*(I_{\mathcal{H}},Y_1,...,Y_n)$ on some Hilbert space $\H$. By universality, we have a surjective *-homomorphism $C^*\langle \mathcal{G}: \R\rangle\to C^*(Y_1,...,Y_n)$ induced by sending $x_i\mapsto Y_i$. This extends to a surjective unital *-homomophism $C^*_u\langle \mathcal{G}: \R\rangle\to C^*(I_{\mathcal{H}},Y_1,...,Y_n)$, which is injective by the universality of the latter algebra. In particular, this means adding a unit to the generators of $\A_0$ and unitizing $\A_0$ give the same algebra, namely $\A$. (We shall see in Remark \ref{nounit} why $\A_0$ cannot have a unit.) As a convention, we say the unitization of the zero $C^*$-algebra is $\C$. 



Now we are ready to change gears and introduce one of the most important properties of $\A$. 
We call a $C^*$-algebra $A$ {\it projective} if given any $C^*$-algebra $B$ with closed two-sided ideal $I$ and quotient map $\pi:B\to B/I$, any *-homomorphism $\phi:A\to B/I$ lifts to a *-homomorphsim $\psi:A\to B$ so that $\phi=\pi\circ \psi$. 
In other words, a projective $C^*$-algebra is a projective object in the category of $C^*$-algebras with *-homomorphisms. 
By \cite[Proposition 2.5]{Bla85}, a $C^*$-algebra is projective if and only if its unitization is projective in the category of unital $C^*$-algebras with unital *-homomorphisms. For simplicity, if we call a unital $C^*$-algebra projective, we mean in the category of unital $C^*$-algebras with unital *-homomorphisms.

\begin{proposition}{\cite[Lemma 8.1.4]{Lor97}}\label{proj}
Both $\A_0$ and $\A$ are projective. 
\end{proposition}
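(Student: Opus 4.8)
The plan is to prove projectivity of $\A_0$ directly from the defining generator-and-relation presentation $\A_0 = C^*\langle x : \|x\| \leq 1\rangle$, and then invoke \cite[Proposition 2.5]{Bla85} to conclude projectivity of the unitization $\A$. So let $B$ be a $C^*$-algebra with closed two-sided ideal $I$, let $\pi\colon B \to B/I$ be the quotient map, and let $\phi\colon \A_0 \to B/I$ be any $*$-homomorphism. Set $b_0 = \phi(x) \in B/I$, which is a contraction: $\|b_0\| \leq 1$. To build a lift $\psi\colon \A_0 \to B$ with $\pi\circ\psi = \phi$, by the universal property of $\A_0$ it suffices to produce a single element $b \in B$ with $\|b\| \leq 1$ and $\pi(b) = b_0$; then $\psi$ is the $*$-homomorphism induced by $x \mapsto b$. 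Thus the entire problem reduces to the following lifting statement: \emph{every contraction in a quotient $B/I$ lifts to a contraction in $B$.}

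First I would recall that $\pi$ is a surjective $*$-homomorphism of $C^*$-algebras, hence a complete quotient map, so there is \emph{some} element $a \in B$ with $\pi(a) = b_0$; the only issue is the norm. The standard trick is a continuous-functional-calculus rescaling. Replace $a$ by $a' = a(1 + a^*a)^{-1/2}$ (computed in the unitization of $B$ if necessary), or equivalently by $f(|a|)\cdot(\text{the partial-isometry part})$ suitably arranged; more cleanly, set $b = a \cdot g(a^*a)$ where $g(t) = \min(1, t^{-1/2})$ on $\sigma(a^*a) \subseteq [0,\infty)$, so that $b^*b = a^*a\, g(a^*a)^2 = h(a^*a)$ with $h(t) = \min(t, 1) \leq 1$, giving $\|b\|^2 = \|b^*b\| \leq 1$. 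Since $g$ is a continuous function with $g(0)$ finite and $\pi$ is a $*$-homomorphism commuting with continuous functional calculus, $\pi(b) = b_0\, g(b_0^* b_0)$. Now use that $b_0$ is already a contraction, so $\sigma(b_0^*b_0) \subseteq [0,1]$, on which $g(t) = t^{-1/2}$ would blow up at $0$ — so instead one should phrase the functional calculus so that on $[0,1]$ the correction is the identity; concretely, the function to use is $t \mapsto t^{-1/2}$ only where $t \geq 1$ and $t \mapsto 1$ where $t < 1$, i.e., exactly the $g$ above, and then $g(t) = 1$ on $[0,1]$ forces $\pi(b) = b_0 \cdot 1 = b_0$. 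Hence $b$ is the desired contractive lift.

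The main obstacle — really the only nonformal point — is confirming that the functional-calculus correction does not disturb the image in $B/I$, i.e., that $\pi(b) = b_0$ exactly and not merely approximately. This hinges on the observation that $g$ restricted to $\sigma(b_0^*b_0) \subseteq [0,1]$ is identically $1$, so that $\pi\bigl(a\, g(a^*a)\bigr) = \pi(a)\, g\bigl(\pi(a)^*\pi(a)\bigr) = b_0\, g(b_0^*b_0) = b_0$; this uses only that $\pi$ is a $*$-homomorphism (so intertwines polynomial, hence continuous, functional calculus in the self-adjoint element $a^*a$) together with $\|b_0\| \leq 1$. Everything else is bookkeeping: checking $\|b\|\le 1$ as above, and checking that $x \mapsto b$ genuinely induces a $*$-homomorphism $\psi\colon \A_0 \to B$ — which is immediate from the universal property of $C^*\langle x : \|x\|\leq 1\rangle$ since $b$ satisfies the single defining relation $\|b\| \leq 1$ — and that $\pi\circ\psi$ agrees with $\phi$ on the generator $x$, hence on all of $\A_0$ by density of noncommutative $*$-polynomials in $x$.

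Finally I would pass from $\A_0$ to $\A$: having shown $\A_0$ is projective, \cite[Proposition 2.5]{Bla85} (quoted in the text just above the statement) says a $C^*$-algebra is projective if and only if its unitization is projective in the unital category, and since $\A$ is the unitization of $\A_0$, it too is projective. Alternatively one can argue directly for $\A$ using the same lifting lemma applied to the unital presentation $C^*\langle x, I : \|x\|\leq 1,\ I = I^* = I^2,\ xI = x = Ix\rangle$, lifting $\phi(x)$ to a contraction $b \in B$ and lifting the unit trivially (a unital $*$-homomorphism into $B/I$ forces $B$ to have been taken unital with $\pi$ unital, so $1_B \mapsto 1_{B/I}$); the map $x \mapsto b$, $I \mapsto 1_B$ then induces the required unital lift. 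Either route completes the proof; I would present the reduction-to-the-contraction-lifting-lemma version since it isolates the one genuine computation and makes the role of the presentation $\A_0 = C^*\langle x : \|x\|\leq 1\rangle$ transparent.
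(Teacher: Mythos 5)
Your proof is correct and follows essentially the same route as the paper, which simply cites \cite[Lemma 8.1.4]{Lor97} and notes that projectivity of $\A_0$ reduces to the fact that any contraction in a $C^*$-quotient lifts to a contraction (with $\A$ handled via the unitization equivalence). You have merely supplied the standard functional-calculus rescaling $b = a\,g(a^*a)$ that the paper outsources to the reference, and that computation is accurate.
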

The projectivity of $\A$ follows from that of $\A_0$, which is a consequence of the fact that any contraction in a $C^*$-quotient lifts to a contraction.

\begin{remark}\label{C(F2)notproj} We mentioned in the introduction that $C^*(\mathbb{F}_2)$ is not projective. This means exactly that unitaries do not in general lift to unitaries, e.g., unitaries in the Calkin algebra with nonzero Fredholm index. 
\end{remark}

\begin{remark}\label{nounit}
By \cite[Proposition 2.3]{Bla85}, a projective $C^*$-algebra is contractible, which implies that it is non-unital and has trivial shape in the sense of \cite{Bla85} and trivial K-theory. 
(We say a unital $C^*$-algebra is contractible when it is homotopy equivalent to $\C$.) 
That $\A$ is contractible and has trivial $K_1$ is also proved in \cite[Theorem 3.1]{BN12}.
\end{remark}

Projectivity is a powerful (and rare) property because many other nice properties can be rephrased in terms of lifting problems. For instance, we can quickly establish Propositions \ref{P:hadwin} and \ref{P:compact}. 
Proposition \ref{P:hadwin} is a special case of the following. 
\begin{proposition}\label{projRFD}
Any separable projective $C^*$-algebra is RFD. 
\end{proposition}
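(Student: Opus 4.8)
The plan is to exploit projectivity directly against a separating family of finite-dimensional representations built from $B(\ell^2)$. Let $A$ be a separable projective $C^*$-algebra, which we may assume is faithfully represented on $\ell^2$; write $\iota\colon A\hookrightarrow B(\ell^2)$ for this inclusion. Fix an increasing sequence of finite-rank projections $P_1\le P_2\le\cdots$ in $B(\ell^2)$ converging strongly to $I$, with $P_n B(\ell^2)P_n\cong\M_{k_n}$. The compressions $b\mapsto P_n b P_n$ are not homomorphisms, but the point-norm limit argument already used in Proposition \ref{P:hadwin} shows that for each $a\in A$ we have $\|\iota(a)\|=\lim_n\|P_n\iota(a)P_n\|$, hence $\|a\|=\sup_n\|P_n\iota(a)P_n\|$. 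So it would suffice to replace the non-multiplicative compressions by honest finite-dimensional representations without losing the norm.

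First I would package the compressions into a single *-homomorphism into a quotient. Form the $C^*$-algebra $B=\prod_{n=1}^\infty \M_{k_n}$ (bounded sequences) and its ideal $I=\bigoplus_{n=1}^\infty \M_{k_n}$ (sequences vanishing in norm at infinity); let $\pi\colon B\to B/I$ be the quotient map. The assignment $a\mapsto (P_n\iota(a)P_n)_n$ is a \emph{not} a homomorphism from $A$ to $B$, but composed with $\pi$ it becomes one: indeed for $a,a'\in A$ the difference $P_n\iota(aa')P_n-(P_n\iota(a)P_n)(P_n\iota(a')P_n)=P_n\iota(a)(I-P_n)\iota(a')P_n$ tends to $0$ in norm (since $P_n\to I$ strongly and $\iota(a')$ is fixed), so the sequence of these differences lies in $I$; similarly for the *-operation and for linear combinations. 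Thus $\phi\colon A\to B/I$, $\phi(a)=\pi\big((P_n\iota(a)P_n)_n\big)$, is a well-defined *-homomorphism. Moreover $\|\phi(a)\|=\limsup_n\|P_n\iota(a)P_n\|=\|a\|$ by the previous paragraph, so $\phi$ is isometric, i.e., faithful.

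Now apply projectivity: since $A$ is projective, $\phi\colon A\to B/I$ lifts to a *-homomorphism $\psi\colon A\to B=\prod_n\M_{k_n}$ with $\pi\circ\psi=\phi$. Writing $\psi=(\psi_n)_n$ with each $\psi_n\colon A\to\M_{k_n}$ a finite-dimensional representation, we get for every $a\in A$ that
$$\|a\|=\|\phi(a)\|=\|\pi(\psi(a))\|=\limsup_n\|\psi_n(a)\|\le\sup_n\|\psi_n(a)\|\le\|a\|,$$
the last inequality because each $\psi_n$ is contractive. Hence $\sup_n\|\psi_n(a)\|=\|a\|$ for all $a$, so $\{\psi_n\}$ is a separating family of finite-dimensional representations and $A$ is RFD.

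The main obstacle is the passage from the compressions $P_n\iota(\cdot)P_n$ — which are the natural source of finite-dimensionality but are not homomorphisms — to genuine finite-dimensional representations; this is exactly what projectivity buys us, by allowing us to lift the asymptotically-multiplicative data (living in $B/I$) to an honest homomorphism into $\prod_n\M_{k_n}$. Two minor points to verify carefully are that $\phi$ really lands in $B/I$ with the asserted norm (strong convergence $P_n\to I$ plus norm-lower-semicontinuity, as in Proposition \ref{P:hadwin}) and that separability of $A$ is not actually needed for this argument, though it is harmless to assume it.
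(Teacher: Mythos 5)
There is a genuine gap at the step where you assert that $\phi$ is a well-defined *-homomorphism into $B/I$. You claim that $P_n\iota(aa')P_n-(P_n\iota(a)P_n)(P_n\iota(a')P_n)=P_n\iota(a)(I-P_n)\iota(a')P_n$ tends to $0$ \emph{in norm} because $P_n\to I$ strongly. Strong convergence only gives that $(I-P_n)\iota(a')P_n\to 0$ in the strong operator topology; it does not give norm convergence, and in general norm convergence fails. For example, if $S$ is the unilateral shift and $P_n$ is the projection onto the span of the first $n$ standard basis vectors, then $\|(I-P_n)SP_n\|=\|Se_n\|=1$ for every $n$, so the difference $P_nS^*SP_n-(P_nS^*P_n)(P_nSP_n)=P_nS^*(I-P_n)SP_n$ has norm $1$ for all $n$. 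Requiring $\|P_n\iota(a)(I-P_n)\|\to 0$ for all $a$ (equivalently $\|[P_n,\iota(a)]\|\to 0$) is exactly the statement that $\iota$ is a quasidiagonal representation with respect to the chosen sequence $(P_n)$, in the sense recalled in Section \ref{nilp}. This is a nontrivial property: it fails for arbitrary choices of $(P_n)$, and producing \emph{some} suitable sequence for a faithful representation of $A$ would essentially require knowing already that $A$ is quasidiagonal, which for a projective algebra one would normally deduce from residual finite-dimensionality --- precisely what is being proved. So as written, the map $\phi$ need not exist.

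The remainder of your argument is correct and worth keeping in mind: \emph{if} one has an isometric *-homomorphism $\phi\colon A\to\prod_n\M_{k_n}/\bigoplus_n\M_{k_n}$, then projectivity lifts it to $\prod_n\M_{k_n}$ and the coordinate maps form a separating family of finite-dimensional representations. In other words, separable, projective, and quasidiagonal together imply RFD; but the quasidiagonality input must be supplied separately. The paper's route sidesteps this issue: every separable $C^*$-algebra is a quotient of an RFD $C^*$-algebra (for instance a universal $C^*$-algebra on countably many contractions, whose residual finite-dimensionality follows from the compression argument of Proposition \ref{P:hadwin}, which needs no multiplicativity of the compressions), projectivity lifts the identity map to an embedding of $A$ into that RFD algebra, and residual finite-dimensionality passes to subalgebras.
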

See \cite[Theorem 11.2.1]{Lor97} or \cite{Had14} for a full proof.  The main ideas are that any separable $C^*$-algebra is the quotient of an RFD $C^*$-algebra, and that residual finite dimensionality passes to subalgebras.  For projective objects, being a quotient implies being a subobject.

Meanwhile Proposition \ref{P:compact} follows from the next proposition, the proof of which uses only the statement of \cite[Proposition 1]{Cho80} as opposed to its proof. 
\begin{proposition}\label{essential}
 Any separable projective $C^*$-algebra $B$ has no nontrivial projections, and if $(\pi, \H)$ is a faithful representation of $B$, then $\pi$ is essential (i.e., $\pi(B)$ contains no nonzero compact operators).
\end{proposition}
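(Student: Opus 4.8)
The plan is to reduce both assertions to the single universal example for which they are known, namely Choi's algebra $C^*(\F_2)$, by exploiting projectivity twice: once to pull a putative nontrivial projection back along a lift, and once to pull a putative nonzero compact operator back the same way. First I would recall from \cite[Proposition 1]{Cho80} that $C^*(\F_2)$ has no nontrivial projections and that every faithful representation of $C^*(\F_2)$ is essential; this is the only input we are allowed to use about that algebra, and we use only the statement. The bridge to an arbitrary separable projective $B$ is the observation that every separable $C^*$-algebra is a quotient of $C^*(\F_\infty)$, and in fact is a quotient of $C^*(\F_2)$ since $\F_\infty$ embeds in $\F_2$; combined with the universal property one gets a surjection $\rho: C^*(\F_2) \to B$ (strictly, one may need the unitization, but by \cite[Proposition 2.3]{Bla85} projectivity forces $B$ to be non-unital and contractible, so $B$ is an ideal in its unitization and everything below can be carried out inside $\widetilde{C^*(\F_2)} \to \widetilde B$; I will suppress this bookkeeping). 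Because $B$ is projective, the identity map $B \to B$ lifts through $\rho$ to a $*$-homomorphism $\sigma: B \to C^*(\F_2)$ with $\rho\circ\sigma = \mathrm{id}_B$. In particular $\sigma$ is injective and $\rho$ restricted to $\sigma(B)$ is its inverse.

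For the projection statement: suppose $p \in B$ is a projection. Then $\sigma(p) \in C^*(\F_2)$ is a projection, hence by Choi's result is $0$ or (in the unitized picture) $1$; applying $\rho$ and using $\rho\circ\sigma = \mathrm{id}$ gives $p = \rho(\sigma(p)) \in \{0, 1_{\widetilde B}\}$, and since $B$ is non-unital the only projection of $B$ landing in $\{0,1_{\widetilde B}\}$ and lying in $B$ is $0$. For the essentiality statement: let $(\pi,\H)$ be a faithful representation of $B$. Consider $\pi \circ \rho : C^*(\F_2) \to B(\H)$. This need not be faithful, but here is where I would insert the trick: replace $\pi$ by $\pi \oplus \tau$, where $\tau$ is a faithful essential representation of $C^*(\F_2)/\ker(\pi\circ\rho)$ composed back, engineered so that $(\pi\circ\rho)\oplus(\text{correction})$ becomes a faithful representation of $C^*(\F_2)$; alternatively, and more cleanly, note that $\pi\circ\sigma$ need not be interesting, so instead argue directly: if $\pi(B)$ contained a nonzero compact $k$, then since $\rho$ is surjective, $\pi\circ\rho : C^*(\F_2)\to B(\H)$ is a (not necessarily faithful) representation whose image contains $k$; quotient $C^*(\F_2)$ by $\ker(\pi\circ\rho)$ to get a faithful representation $\overline{\pi\circ\rho}$ of $C^*(\F_2)/\ker(\pi\circ\rho)$ whose image still contains the nonzero compact $k$, contradicting that this quotient — wait, the quotient of $C^*(\F_2)$ need not be $C^*(\F_2)$.

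So the cleaner route, which I would actually carry out, is: use $\sigma$ to embed $B$ into $C^*(\F_2)$, pick any faithful representation $\lambda$ of $C^*(\F_2)$, and restrict. If $(\pi,\H)$ is a faithful representation of $B$ with $\pi(B) \ni k \ne 0$ compact, form the faithful representation $\lambda \oplus (\pi\circ\rho)$ of $C^*(\F_2)$ — this is faithful because $\lambda$ already is — whose image contains $0 \oplus k$, a nonzero compact operator, contradicting Choi's essentiality statement for the faithful representation $\lambda\oplus(\pi\circ\rho)$ of $C^*(\F_2)$; hence $\pi(B)$ contains no nonzero compacts. The analogous contradiction for projections is even simpler and does not need the direct sum: a nontrivial projection in $B$ maps under the injective $\sigma$ to a nontrivial projection in $C^*(\F_2)$. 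The main obstacle is the unitization bookkeeping: ``nontrivial projection'' and ``faithful representation'' must be interpreted consistently as $B$ ranges over non-unital algebras and $C^*(\F_2)$ (or its quotients) is unital, and one must make sure the surjection $C^*(\F_2)\to B$ and its splitting $\sigma$ live in the right (unital vs. non-unital) category so that $\rho\circ\sigma=\mathrm{id}_B$ genuinely holds; once that is pinned down the argument is short, and this is precisely the point where the hypothesis ``$B$ projective'' (rather than merely RFD) is doing the work, since it is what produces the splitting $\sigma$.
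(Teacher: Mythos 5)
Your treatment of the first assertion follows the paper's route: use projectivity to lift the identity map along a surjection from a full free group $C^*$-algebra, thereby embedding $B$ into it, and pull Choi's no-nontrivial-projections theorem back along the embedding. One correction is needed there: it is not true that every separable unital $C^*$-algebra is a quotient of $C^*(\F_2)$. The inclusion $\F_\infty\leq\F_2$ yields an embedding $C^*(\F_\infty)\hookrightarrow C^*(\F_2)$, not a surjection in the other direction, and the quotients of $C^*(\F_2)$ are precisely the unital $C^*$-algebras generated by two unitaries (for instance $C(\T^3)$ is not one, since $\T^3$ does not embed in $\T^2$). The paper works with $C^*(\F_\infty)$, which does surject onto every separable unital $C^*$-algebra and still has only trivial projections; you should do the same, and then this half of your argument is sound.

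The second assertion is where your argument genuinely breaks down. The image of $\lambda\oplus(\pi\circ\rho)$ is $\{\lambda(a)\oplus\pi(\rho(a)) : a\in C^*(\F_2)\}$; since $\lambda$ is faithful, the only element of this image of the form $0\oplus(\,\cdot\,)$ is $0\oplus 0$, so the image does not contain $0\oplus k$. Nor is the element it does contain, namely $\lambda(\sigma(b))\oplus k$ where $\pi(b)=k$, compact in general, so no contradiction with the essentiality of faithful representations of $C^*(\F_2)$ is produced. More fundamentally, the property ``every faithful representation is essential'' cannot be imported from a containing algebra in this way: restricting a faithful essential representation of the big algebra to $\sigma(B)$ only shows that \emph{some} faithful representation of $B$ is essential. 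The paper instead deduces the second assertion from the first --- this is what ``the same argument as for \cite[Corollary 2]{Cho80}'' means: if $\pi(B)\cap K(\H)\neq 0$, then it is a nonzero $C^*$-subalgebra of $K(\H)$, and continuous functional calculus applied to a nonzero positive compact element of it (whose spectrum has an isolated nonzero point) yields a nonzero finite-rank projection in $\pi(B)$, hence a nontrivial projection in $B$, contradicting the first assertion. Replacing your direct-sum step with this deduction repairs the proof.
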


\begin{proof}
It will suffice to prove the unital case. To that end, let $B$ be a separable unital $C^*$-algebra that is projective as a unital $C^*$-algebra. 
Then $B$ is isomorphic to a quotient of $C^*(\F_\infty)$. By projectivity, this isomorphism lifts to an embedding of $B$ into $C^*(\F_\infty)$. Since $C^*(\F_\infty)$ has no nontrivial projections \cite[Theorem 1]{Cho80}, neither does $B$.\footnote{This fact also follows from \cite[Proposition 3]{HS17}.} It then follows, by the same argument as for \cite[Corollary 2]{Cho80}, that any faithful representation of $B$ on a Hilbert space $\H$ trivially intersects $K(\mathcal{H})$. 
\end{proof}

Contractibility makes it difficult to distinguish projective $C^*$-algebras, such as $\A$ and the free product $C[-1,1]\ast_\C C[-1,1]\simeq C^*_u\langle x_1,x_2: \|x_i\|\leq 1, x_i=x_i^*, i=1,2\rangle$. (That this particular $C^*$-algebra is projective follows from \cite[Theorem 3.2]{EL}.)  

\begin{question}\label{Q:d-s}
Are $\A$ and $C[-1,1]\ast_\C C[-1,1]$ isomorphic?  
\end{question}

We point out why the most obvious guess at an isomorphism does not answer Question \ref{Q:d-s}.  Let $x$ be the distinguished generator of $\A$, and $b,c$ be the canonical generators of the $C[-1,1]$ factors in the free product.  Since $\text{Re }x$ and $\text{Im }x$ are self-adjoint contractions with spectrum $[-1,1]$, one might propose an isomorphism via $\text{Re }x \mapsto b$, $\text{Im }x \mapsto c$.  But this fails, because $x = \text{Re }x + i \text{Im }x$ is a contraction, while $b + ic$ has norm 2.  (One can deduce this from the representation of $C[-1,1]\ast_\C C[-1,1]$ sending $b$ to $\begin{pmatrix} 0 & 1\\ 1 & 0\end{pmatrix}$, $c$ to  $\begin{pmatrix} 0 & i\\ -i & 0\end{pmatrix}$, and $b+ic$ to $\begin{pmatrix} 0 & 0\\ 2 & 0\end{pmatrix}$.)

Consider the ``abelianized" version of this question.  The universal unital $C^*$-algebra of a normal contraction is $C(\overline{\mathbb{D}})$, and the tensor product $C[-1,1] \otimes C[-1,1]$ is $C([-1,1]^2)$.  These two $C^*$-algebras are isomorphic, because the closed unit disk is homeomorphic to the closed (solid) $2\times2$ square.  But the homeomorphism is more complicated than simply rescaling coordinates.  Question \ref{Q:d-s} is asking for a noncommutative version of this homeomorphism.  With poetic license: are the noncommutative disk and noncommutative solid square homeomorphic?

\section{Finite-dimensional and nilpotent representations of $\A$}\label{nilp}

In this section we discuss the rather surprising fact that a universal contraction can be a countable direct sum of nilpotent matrices.  This was originally proved by Herrero via intricate computations; here we give two alternative arguments.  The first, ``\`{a} la Choi" and fairly short, proceeds by dilating slightly rescaled finite-dimensional compressions of a universal contraction to contractive nilpotent matrices.  The second, which we later generalize in the proof of Theorem \ref{nilrow}, asymptotically factorizes a faithful representation of $\A$ through the universal $C^*$-algebras of nilpotents of increasing order, relying on the classical nilpotent approximation theorem of Apostol-Foias-Voiculescu and a result of Shulman on lifting nilpotent contractions. 

\begin{theorem}\label{nilprep}
There exists a separating family of finite-dimensional representations of $\A$ that map the generator to contractive nilpotent matrices. In other words, for any noncommutative *-polynomial $q$ and contractive Hilbert space operator $S$, 
\begin{align*}
    \|q(S)\|&\leq 
    \sup \{\|q(M)\|: M\in \mathbb{M}_n, \|M\|\leq 1, \, M^n=0, \, n\geq 1\}.\\
\end{align*}
\end{theorem}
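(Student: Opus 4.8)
The plan is to show that every finite-dimensional contractive matrix can be dilated to a contractive nilpotent matrix after a slight rescaling, and then combine this with Proposition \ref{P:hadwin}(1). First I would fix a noncommutative *-polynomial $q$ and a contractive Hilbert space operator $S$. By Proposition \ref{P:hadwin}(1) it suffices to bound $\|q(S)\|$ by the supremum over contractive matrices, so I may assume $S = M \in (\M_n)_{\leq 1}$. For $0 < r < 1$, the matrix $rM$ is a strict contraction, and the key construction is to build a nilpotent contractive dilation of $rM$: set
\[
N = \begin{pmatrix} rM & D \\ 0 & 0 \end{pmatrix}
\]
acting on $\C^n \oplus \C^n$ (or more copies), where $D$ is chosen so that $N$ is a contraction (e.g. a defect-type operator making the columns of $N$ orthonormal-controlled), and note that $N^2$ has $(rM)^2$ in the corner; iterating this construction $k$ times produces a contractive matrix $N_k$ of size $(k+1)n$ with $N_k^{k+1} = 0$ whose top-left $n \times n$ corner is $(rM)$ raised to appropriate powers — more precisely, one arranges that $P N_k^j P = (rM)^j$ for $j \le d$, where $P$ is the projection onto the first copy of $\C^n$. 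The cleanest version: take the ``truncated shift with weight $rM$'' construction so that $q(N_k)$ compressed to the first block equals $q(rM)$ whenever the degree $d$ of $q$ is at most $k$. Then $\|q(N_k)\| \geq \|P q(N_k) P\| = \|q(rM)\|$, and since $N_k$ is a contractive nilpotent matrix it contributes to the supremum on the right-hand side.

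Next I would let $r \to 1$: since $q$ is a fixed *-polynomial, $q(rM) \to q(M)$ in norm as $r \to 1^-$, so $\|q(M)\| = \lim_{r \to 1} \|q(rM)\| \leq \sup\{\|q(N)\| : N \in \M_m, \|N\|\le 1, N^m = 0\}$. Combining with Proposition \ref{P:hadwin}(1) gives
\[
\|q(S)\| \leq \sup_{M \in (\M_n)_{\leq 1}} \|q(M)\| \leq \sup\{\|q(N)\| : N \text{ contractive nilpotent matrix}\},
\]
which is the stated inequality. The reverse inequality is trivial since nilpotent contractive matrices are in particular contractions, so in fact equality holds, and this is exactly the assertion that the relevant supremum equals $\|q(x)\|$; the separating-family formulation follows because for each $q$ and each $\varepsilon$ we have produced a finite-dimensional nilpotent representation nearly attaining $\|q(x)\|$, and collecting these over a countable dense set of *-polynomials (or over all $q$) yields a separating family.

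The main obstacle is getting the dilation right: I need a contractive \emph{nilpotent} matrix $N_k$ whose compression to a distinguished corner reproduces not just $rM$ but all powers $(rM)^j$ for $j \le d$ simultaneously — a single dilation of $rM$ is easy, but dilating so that $P N_k^j P = (rM)^j$ for all $j$ up to the degree requires the block-weighted-shift picture $N_k = \sum_{i} E_{i,i+1} \otimes (rM)$ (or with appropriate defect corrections down the superdiagonal) and a careful verification that this is a contraction. The rescaling by $r$ is what buys the ``wiggle room'' to absorb the defect terms and keep $\|N_k\| \le 1$; making the norm estimate $\|N_k\| \le 1$ precise — possibly by exhibiting $N_k$ as a piece of an honest power dilation or by a direct $2\times 2$ operator-matrix norm computation — is the one genuinely computational step, though it is standard. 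An alternative that sidesteps the explicit dilation, mentioned in the excerpt, is to factor a faithful representation of $\A$ asymptotically through the universal $C^*$-algebras of order-$n$ nilpotent contractions using the Apostol–Foias–Voiculescu nilpotent approximation theorem together with Shulman's lifting result for nilpotent contractions; I would present the dilation argument as the primary proof and note this as the second approach.
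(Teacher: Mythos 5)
Your reduction to matrix contractions via Proposition \ref{P:hadwin}(1) and the idea of rescaling by $r<1$ to buy wiggle room are both fine, but the central step --- producing a contractive nilpotent $N_k$ with $P\,q(N_k)\,P = q(rM)$ for all noncommutative *-polynomials $q$ of degree $\leq d$ --- cannot work, and the failure is structural rather than computational. A power dilation controls the compressions of $N_k^j$, but $q$ is a \emph{*-}polynomial, and corners do not intertwine the *-functional calculus. Already at degree $2$: demanding $PN^*NP=(PN^*P)(PNP)$ forces $(1-P)NP=0$, and demanding $PNN^*P=(PNP)(PN^*P)$ forces $(1-P)N^*P=0$; together these say $P$ reduces $N$, so $rM=PNP$ would be a direct summand of the nilpotent $N_k$ and hence itself nilpotent. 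In other words, any dilation reproducing both $z^*z$ and $zz^*$ in the corner is no dilation at all, and your explicit candidate $N=\left(\begin{smallmatrix} rM & D\\ 0&0\end{smallmatrix}\right)$ already exhibits this: $PNN^*P=r^2MM^*+DD^*\neq (rM)(rM)^*$. This is exactly the obstruction the paper flags in a footnote citing Arveson's characterization of power dilations to nilpotents: those handle powers $z^j$, ``but we need all *-monomials.''

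The paper's proofs all avoid corners in favor of (approximate) \emph{direct summands}, which do respect *-monomials. Its second proof is the closest in spirit to yours: instead of dilating, it tensors the finite compressions $T_n=P_nTP_n$ with a unitary conjugate $M_n$ of the $n\times n$ Jordan block arranged so that the $(1,1)$ entry of $M_n$ is the numerical radius $\cos\frac{\pi}{n+1}\to 1$; then $M_n$ is within $o(1)$ of $\cos(\frac{\pi}{n+1})\oplus C_n$, so $\cos(\frac{\pi}{n+1})T_n$ is an \emph{asymptotic direct summand} of the nilpotent contraction $T_n\otimes M_n$, and $\|q(T_n\otimes M_n)\|\to\|q(T)\|$ by strong* convergence plus a uniform Lipschitz estimate. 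The other two proofs (Herrero's norm-approximation of a universal contraction by block-diagonal nilpotents, and the asymptotic factorization through $\A_n=C^*_u\langle x_n:\|x_n\|\leq 1,\ x_n^n=0\rangle$ via Apostol--Foias--Voiculescu and Shulman's projectivity result) likewise replace your exact corner identity with norm-limit arguments. Your closing remark correctly identifies the AFV--Shulman route as a viable alternative; to salvage your write-up you should promote that to the main argument, or replace the dilation step with the tensor trick above.
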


Theorem \ref{nilprep} gives us a different refinement (from Theorem \ref{FNT}) of the von Neumann-type inequality we got from the residual finite-dimensionality of $\A$ in Section \ref{max}.


Since the representations guaranteed by Theorem \ref{nilprep} are separating, by \cite[Theorem 3.2]{CS}, $\A$ has a dense subset of elements that attain their norm under one of these representations. However, this dense subset need not contain the canonical dense subset $\mathcal{P}_*$ of noncommutative *-polynomials in the generator, as the next example shows. 

\begin{example}\label{notnilp}
Let $x$ be the canonical generator of $\A$. Then $x+x^*$ cannot attain its norm under a finite-dimensional representation mapping $x$ to a nilpotent operator. 

First note that $\|x+x^*\|=2$, which can be seen by mapping $x$ to $1$. Now, suppose $M\in \mathbb{M}_n$ is nilpotent with $\|M\|=1$. Since $M+M^*$ is self-adjoint, if $\|M+M^*\|=2$, then there exists a unit vector $v$ such that either 
$$(M+M^*)v=2v \ \text{or}\ (M+M^*)v=-2v.$$
Because the unit vector $v$ is an extreme point of the unit ball, it follows that $Mv = v$ or $Mv = -v$, either way violating the fact that $\sigma(M) = 0$.
\end{example}

Now, let us commence with the proofs of Theorem \ref{nilprep}. The first proof is truly due to Herrero, though he does not draw this explicit conclusion. 
Let $T$ be a universal contraction as constructed in Proposition \ref{P:hadwin}(2). Herrero shows in \cite[Corollary 4.8]{Her81} that $T$ is the norm limit of contractive nilpotent operators that are block diagonal. Hence by Proposition \ref{P:aue}, the same holds for any universal contraction on $B(\ell^2)$, and we have the following extension of \cite[Corollary 4.8]{Her81}.
\begin{proposition}\label{BDN}
Every universal contraction in $B(\ell^2)$ is the norm limit of block-diagonal nilpotent contractive operators with finite-dimensional blocks. 
\end{proposition}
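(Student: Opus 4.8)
The plan is to deduce the statement for an arbitrary universal contraction from Herrero's theorem for one specific universal contraction, using the approximate unitary equivalence of Proposition~\ref{P:aue}. Let $\mathcal{N}\subseteq B(\ell^2)_{\leq 1}$ denote the set of block-diagonal nilpotent contractions with finite-dimensional blocks (the block decomposition being allowed to vary from operator to operator), and let $\overline{\mathcal{N}}$ be its norm closure. Herrero's \cite[Corollary 4.8]{Her81} says precisely that the universal contraction $T=\bigoplus_j T_j$ of Proposition~\ref{P:hadwin}(2) lies in $\overline{\mathcal{N}}$; granting that, what remains is to show $\overline{\mathcal{N}}$ contains every universal contraction on $\ell^2$.

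First I would verify that $\overline{\mathcal{N}}$ is invariant under unitary conjugation. If $N\in\mathcal{N}$ is block-diagonal for a decomposition $\ell^2=\bigoplus_j H_j$ with each $H_j$ finite-dimensional, and $U$ is a unitary, then $U^*NU$ is block-diagonal for $\bigoplus_j U^*H_j$, is again a contraction, and satisfies $(U^*NU)^m=U^*N^mU=0$ where $m$ is the nilpotency order of $N$; hence $U^*NU\in\mathcal{N}$. Taking norm limits gives $U^*\overline{\mathcal{N}}U\subseteq\overline{\mathcal{N}}$, and since $U$ is arbitrary this is an equality.

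Now let $S\in B(\ell^2)_{\leq 1}$ be any universal contraction. By Proposition~\ref{P:aue} there are unitaries $U_k$ with $U_k^*TU_k\to S$ in norm. By the previous paragraph each $U_k^*TU_k$ lies in $\overline{\mathcal{N}}$, so choosing $N_k\in\mathcal{N}$ with $\|N_k-U_k^*TU_k\|\to 0$ yields $N_k\to S$, i.e. $S\in\overline{\mathcal{N}}$. That is the assertion.

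The only genuinely hard ingredient here is Herrero's base case \cite[Corollary 4.8]{Her81}, which is proved there by intricate direct computation; the rest is the soft bootstrap above, whose one delicate point is reading ``block-diagonal with finite-dimensional blocks'' in the basis-free sense --- a decomposition $\ell^2=\bigoplus_j H_j$ with $\dim H_j<\infty$ --- so that $\mathcal{N}$ is visibly closed under unitary conjugation. I would also note that the ``\`{a} la Choi'' argument presented later in this section, which dilates slightly rescaled finite-dimensional compressions of a universal contraction to contractive nilpotent matrices, can be run block by block so as to produce block-diagonal nilpotent norm-approximants of the given universal contraction directly, yielding Proposition~\ref{BDN} without recourse to either Herrero or Proposition~\ref{P:aue}.
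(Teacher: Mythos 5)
Your argument is correct and is essentially the paper's own proof: the paper likewise invokes Herrero's \cite[Corollary 4.8]{Her81} for the specific universal contraction of Proposition~\ref{P:hadwin}(2) and then transfers the conclusion to an arbitrary universal contraction via the approximate unitary equivalence of Proposition~\ref{P:aue}. You have merely written out the (correct) details the paper leaves implicit, namely that the norm closure of the block-diagonal nilpotent contractions is invariant under unitary conjugation, so a norm limit of unitary conjugates of $T$ stays in that closure.
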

Theorem \ref{nilprep} follows by taking the family of representations induced by mapping the generator $x$ to each of the blocks in the sequence. 

\begin{remark}\label{inside} 
Note that no universal contraction $T\in B(\H)$ can be the norm limit of nilpotent contractions that lie \underline{in} $C^*(T,I)$. Indeed, if $N\in C^*(T,I)$ is nilpotent and $\pi:C^*(T,I)\to \C$ maps $T\mapsto 1$, then $$\|T-N\|\geq \|\pi(T)-\pi(N)\|=\|1-0\|=1.$$ 
Thanks to Don Hadwin for originally pointing out this fact.
\end{remark}

Our second argument proves Theorem \ref{nilprep} directly by an ``asymptotic dilation" argument.  This is close in spirit to Choi's proof \cite[Theorem 7]{Cho80} that $C^*(\F_2)$ is RFD, but the details are somewhat different.\footnote{Arveson has characterized in \cite[Theorem 1.3.1]{Arv72} the contractive Hilbert space operators that can be power dilated to contractive nilpotent operators -- for powers less than the order of the nilpotent.  But we need all *-monomials, at least asymptotically.}

\begin{proof}
For each $n>1$, let $J_n\in \mathbb{M}_n$ denote the $n\times n$ Jordan block with eigenvalue $0$. 
By \cite[Proposition 1]{HD}, there is a unit vector $\xi_n\in \C^n$ so that $w(J_n)=\langle J_n \xi_n, \xi_n\rangle=\cos\frac{\pi}{n+1}$, where $w(J_n)$ is the numerical radius of $J_n$. Let $U_n\in \mathbb{M}_n$ be a unitary so that $U_ne_1=\xi_n$ (where $e_1$ is from the standard basis of $\C^n$). For each $n > 1$, define $$M_n:=U_n^*J_nU_n=(a_{ij}^{(n)})_{1\leq i,j\leq n}.$$
Then $M_n$ is a nilpotent contraction with $a^{(n)}_{11}=\cos \frac{\pi}{n+1}$. 

Let $\H=\ell^2$ with the standard basis $\{e_1,e_2,...\}$; let $P_n$ be the orthogonal projection of $\H$ onto span$\{e_1,...,e_n\}$; let $T$ be a universal contraction operator on $\H$; and let $T_n:=P_nTP_n$ for each $n>1$. 
Then $T_n\otimes M_n\in P_nB(\H)P_n\otimes \mathbb{M}_n$ is a nilpotent contraction, and, if we view $P_n B(\H) P_n$ as $\mathbb{M}_n$, 
then the assignments $T\mapsto T_n\otimes M_n$ induce a family of finite-dimensional representations $\pi_n: C^*(T, I)\to \mathbb{M}_{n^2}$ where $\pi_n(T)$ is nilpotent. 
To see that this family is separating, it will suffice to show that $\oplus_n \pi_n$ is isometric, which will follow from showing that $\|q(T)\|=\sup_n \|q(T_n\otimes M_n)\|$ for any noncommutative *-polynomial $q$. To that end, let $q$ be any nonzero noncommutative *-polynomial, and let $\varepsilon>0$. 

For each $n> 1$, define $C_n=(c^{(n)}_{ij})\in \mathbb{M}_{n-1}$ by $c^{(n)}_{ij}:=a^{(n)}_{i+1,j+1}$ for each $1\leq i,j\leq n-1$. 
Since $a^{(n)}_{11}=\cos \frac{\pi}{n+1}\to 1$ as $n\to \infty$, we have 
$$\|[a_{1j}^{(n)*}]_{2\leq j\leq n}\|_{\ell^2_n}\to 0\ \text{and}\  \|[a_{i1}^{(n)}]_{2\leq i\leq n}\|_{\ell^2_n}\to 0$$
as $n\to \infty$, and so 
\begin{align*}
\|M_n-\left(\cos(\tfrac{\pi}{n+1})\oplus C_n \right)\|&=
\left\|\begin{pmatrix} 0 & a^{(n)}_{12} & ... & a^{(n)}_{1n}\\
           a^{(n)}_{21} & 0 & ... & 0\\
           \vdots & \vdots &... &\vdots \\
           a^{(n)}_{1n} & 0 &... & 0
    \end{pmatrix}\right\|\\
&\leq \|[a_{1j}^{(n)*}]_{2\leq j\leq n}\|_{\ell^2_n}+ \|[a^{(n)}_{i1}]_{2\leq i\leq n}\|_{\ell^2_n}\to 0
\end{align*}
as $n\to \infty$, which implies that 
$$\|T_n\otimes M_n- T_n\otimes (\cos(\tfrac{\pi}{n+1})\oplus C_n)\|\to 0$$
as $n\to \infty$.
Now, $q$ is Lipschitz on the unit ball of $\mathbb{M}_{n^2}\simeq P_nB(\mathcal{H})P_n\otimes \mathbb{M}_n$ with Lipschitz constant independent of $n$. From this we see that

\begin{align}\label{a}
\|q(T_n\otimes M_n)-q((T_n\otimes (\cos(\tfrac{\pi}{n+1})\oplus C_n))\|\xrightarrow[n\to\infty]{}0.    
\end{align}
Assume for simplicity that $\|q(T)\|=1$. Since $T_n\xrightarrow{S^*OT}T$, so does $\cos(\frac{\pi}{n+1})T_n$, and we have 
\begin{align}\label{b}
    \|q(\cos(\tfrac{\pi}{n+1})T_n)\|> 1-\frac{\varepsilon}{2}
\end{align} for $n$ sufficiently large.  
Combining \eqref{a} and \eqref{b}, we have, for $n$ sufficiently large, 
\begin{align*}
    \|q(T_n\otimes M_n)\|&> \|q(T_n\otimes (\cos(\tfrac{\pi}{n+1})\oplus C_n))\|-\frac{\varepsilon}{2}\\
    &= \|q((T_n\otimes \cos(\tfrac{\pi}{n+1}))\oplus (T_n\otimes C_n))\|-\frac{\varepsilon}{2}\\
    &= \|q((T_n\otimes \cos(\tfrac{\pi}{n+1}))\oplus q(T_n\otimes C_n)\|-\frac{\varepsilon}{2}\\
    &\geq  \|q(\cos(\tfrac{\pi}{n+1})T_n)\|-\frac{\varepsilon}{2}\\
    &>1-\frac{\varepsilon}{2}-\frac{\varepsilon}{2}=1-\varepsilon.
\end{align*}
Since $\varepsilon$ was arbitrary, this shows that $\oplus_n \pi_n$ is an isomorphism.
\end{proof}

Our final proof of Theorem \ref{nilprep} will require a few preliminary results. Though it is a little more work on the outset, this approach is what we will generalize to prove Theorem \ref{nilrow}, at which point we will appreciate the preliminaries established here. We begin by recalling a few definitions.

\begin{definition}
A separable $C^*$-algebra $A$ is {\it quasidiagonal} (QD) if there exists a sequence of completely positive contractive maps $\phi_n:A\to \mathbb{M}_{k_n}$ that are asymptotically multiplicative and asymptotically isometric. 
\end{definition}
It is easy to see that a separable RFD $C^*$-algebra is QD since it embeds faithfully into some product $\prod_{n} \mathbb{M}_{k_n}$. 

\begin{definition}
A set of operators $\Omega\subseteq B(\H)$ is called {\it quasidiagonal} (QD) if 
there exists an increasing sequence of finite-rank projections $P_1\leq P_2\leq...$ converging strongly to $I$ such that $\|P_nT-TP_n\|\to 0$ for every $T\in \Omega$.

A representation $\pi$ of a $C^*$-algebra $A$ on $\H$ is called {\it quasidiagonal} (QD) if $\pi(A)$ is a QD set of operators on $\H$. 
\end{definition}

Note that $\pi$ being a QD representation of $A$ is not equivalent to $\pi(A)$ being a QD $C^*$-algebra (see \cite[Remark 7.5.3]{BO}). The issue arises from a Fredholm index obstruction, which can be avoided by taking essential representations, i.e., those in which the image contains no nonzero compact operators. In fact, Voiculescu has shown in \cite{Voi91} that a separable $C^*$-algebra $A$ is QD if and only if every faithful essential representation of $A$ (on a separable Hilbert space) is quasidiagonal. 

\begin{definition}
An operator $T$ on $\H$ is called {\it quasitriangular} if there exists an increasing sequence of finite-rank projections $P_1\leq P_2\leq...$ converging strongly to $I$ such that $\|TP_n-P_nTP_n\|\to 0$; it is called {\it bi-quasitriangular} if $T$ and $T^*$ are quasitriangular.
\end{definition}

A QD operator is automatically bi-quasitriangular, and so if $\pi$ is a QD representation of a $C^*$-algebra $A$, then every $\pi(a)\in \pi(A)$ is bi-quasitriangular. In particular, if $\pi$ is an essential representation of a separable RFD $C^*$-algebra, then every $\pi(a)\in \pi(A)$ is bi-quasitriangular. 

Our first step is an observation which utilizes 
Apostol, Foias, and Voiculescu's characterization of the norm closure of nilpotents in $B(\H)$ \cite[Theorem 2.7]{AFV} (which is also a key ingredient in the proof of Herrero's result \cite[Corollary 4.8]{Her81}).  

\begin{proposition}\label{nillim}
  If $\pi$ is a faithful representation of a projective $C^*$-algebra $A$ on a separable Hilbert space $\H$, then for any $a\in A$, $\pi(a)$ is the norm limit of nilpotent operators in $B(\H)$ if and only if its spectrum is connected and contains $0$.
  
  In particular, if $T\in B(\H)$ is a universal contraction, then there exists a sequence $(T_n)_{n\geq 1}$ of nilpotent operators in $B(\H)$ that converge to $T$ in norm. Moreover, these can be chosen to be contractions satisfying $T^n_n=0$ for each $n\geq 1$. 
\end{proposition}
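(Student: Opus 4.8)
The plan is to derive both assertions from the Apostol--Foias--Voiculescu description of $\overline{\mathcal{N}(\H)}$, the norm closure of the nilpotent operators on a separable infinite-dimensional Hilbert space: by \cite[Theorem 2.7]{AFV}, an operator $S\in B(\H)$ lies in $\overline{\mathcal{N}(\H)}$ if and only if $S$ is bi-quasitriangular and both $\sigma(S)$ and $\sigma_e(S)$ are connected and contain $0$. The point is that when $S=\pi(a)$ for a faithful representation $\pi$ of a separable projective $C^*$-algebra $A$, bi-quasitriangularity is automatic and the two spectral conditions collapse to a single condition on $\sigma_A(a)$.

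First I would observe that $\pi$ is essential (Proposition \ref{essential}), so that the composition of $\pi$ with the Calkin quotient map is an injective *-homomorphism, hence isometric, hence spectrum-preserving; this gives $\sigma_e(\pi(a)) = \sigma(\pi(a))$ for every $a$, while faithfulness of $\pi$ gives $\sigma(\pi(a)) = \sigma_A(a)$. Next, since $A$ is projective it is RFD (Proposition \ref{projRFD}), so $\pi$ is a quasidiagonal representation (by the discussion before the statement, which runs through Voiculescu's theorem \cite{Voi91}), and hence every $\pi(a)$ is bi-quasitriangular. Feeding these facts into \cite[Theorem 2.7]{AFV}, the criterion for $\pi(a)\in\overline{\mathcal{N}(\H)}$ becomes exactly ``$\sigma_A(a)$ is connected and contains $0$,'' which is the claimed equivalence.

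For the ``in particular'' clause, a universal contraction $T$ induces an isomorphism $\A\to C^*(T,I)$ sending the generator $x$ to $T$, i.e., a faithful representation of the projective algebra $\A$ (Proposition \ref{proj}). Evaluating the one-dimensional representations $x\mapsto\lambda$, $\lambda\in\overline{\D}$, shows $\overline{\D}\subseteq\sigma(T)$, and $\|T\|\le1$ forces equality; so $\sigma(T)=\overline{\D}$ is connected and contains $0$, and the first part yields nilpotents converging in norm to $T$. For the refinement, I would note that any nilpotent $N$ within $\varepsilon$ of $T$ has norm at most $1+\varepsilon$, so dividing by $\max(1,\|N\|)$ produces a contractive nilpotent at distance $O(\varepsilon)$ from $T$; and since $N^m=0$ implies $N^n=0$ for all $n\ge m$, the numbers $d_n:=\inf\{\|N-T\|:N\in B(\H),\ \|N\|\le1,\ N^n=0\}$ are non-increasing with $d_n\to0$, so picking $T_n$ with $T_n^n=0$, $\|T_n\|\le1$, and $\|T_n-T\|\le d_n+\tfrac1n$ completes the proof. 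I anticipate no real obstacle: everything rests on the quoted results (AFV, Voiculescu, Propositions \ref{essential}, \ref{projRFD}, \ref{proj}) together with elementary rescaling and reindexing, and the only delicate point is the spectral bookkeeping that identifies $\sigma$, $\sigma_e$, and $\sigma_A$ so as to collapse the three AFV conditions to one.
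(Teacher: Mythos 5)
Your proposal is correct and follows essentially the same route as the paper: Apostol--Foias--Voiculescu's characterization, combined with essentiality of $\pi$ (Proposition \ref{essential}) to get $\sigma=\sigma_e$ and with projectivity $\Rightarrow$ RFD $\Rightarrow$ quasidiagonal representation to get bi-quasitriangularity for free. The only difference is that you spell out the rescaling and reindexing for the ``moreover'' clause, which the paper dismisses as an easy adjustment.
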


\begin{proof}
It will suffice to prove the unital case.
Let $\pi$ be a faithful representation of $A$, which is also essential by Proposition \ref{essential}. By \cite[Theorem 2.7]{AFV}, we know that an operator $T\in B(\mathcal{H})$ is the norm limit of nilpotent operators if and only if $T$ is bi-quasitriangular, $\sigma(T)$ and $\sigma_e(T)$ are connected, and $0\in \sigma_e(T)$. 
Because $A$ is projective, it is RFD. 
Since $\pi$ is essential, every element $\pi(a)\in \pi(A)$ is automatically bi-quasitriangular, and $\sigma(\pi(a))=\sigma_e(\pi(a))$.
Hence we have that $\pi(a)$ is the norm limit of nilpotent operators in $B(\mathcal{H})$ if and only if $\sigma(\pi(a))$ is connected and contains zero. 

In the case of $\A$, note that the spectrum of the generator must be $\overline{\D}$, and any sequence of nilpotent operators converging in norm to $T$ can be easily adjusted to satisfy the extra criteria. 
\end{proof}

Of course, Proposition \ref{BDN} says we can actually arrange it so that each $T_n$ is also block-diagonal. However, this will not be of any help when we generalize Theorem \ref{nilprep} in Section \ref{row}, and Proposition \ref{nillim} is really all we need. 

The following theorem gives an ``exactness"-type result, which says that any faithful representation of $\A$ on a separable Hilbert space ``asymptotically factorizes" through the family of universal $C^*$-algebras generated by nilpotent contractions of increasing finite orders.  

\begin{theorem}\label{approx}
Let $\pi:\A\to B(\H)$ be a faithful 
representation of $\A$ on a separable Hilbert space $\H$, and let $$\phi_n:\A \to \A_n:=C^*_u\langle x_n: \|x_n\|\leq 1,\ x_n^n=0\rangle$$ be the unital *-homomorphisms induced by mapping $x\mapsto x_n$ for each $n\geq 1$. Then there exists a sequence of *-homomorphisms $$\psi_n:\A_n\to B(\H)$$ such that $\psi_n\circ\phi_n$ converges to  $\pi$ pointwise in norm.
\end{theorem}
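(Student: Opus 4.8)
The plan is to reduce the statement to Proposition~\ref{nillim}. Since $\pi$ is a faithful representation of $\A$, it is an isometric *-isomorphism onto $C^*(\pi(x),I)$ sending $x$ to $\pi(x)$, so $\|q(\pi(x))\| = \|\pi(q(x))\| = \|q(x)\|$ for every noncommutative *-polynomial $q$; thus $\pi(x)$ is a universal contraction acting on the separable Hilbert space $\H$. Applying the ``moreover'' clause of Proposition~\ref{nillim} to $\pi(x)$, I obtain for each $n\geq 1$ a contraction $N_n\in B(\H)$ with $N_n^n=0$ and $\|N_n-\pi(x)\|\to 0$.

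Next I would feed the $N_n$ back through the universal property of $\A_n=C^*_u\langle x_n:\|x_n\|\leq 1,\ x_n^n=0\rangle$: because $N_n$ is a contraction with $N_n^n=0$, the assignment $x_n\mapsto N_n$ induces a unital *-homomorphism $\psi_n:\A_n\to B(\H)$. The composite $\psi_n\circ\phi_n:\A\to B(\H)$ is then a unital *-homomorphism carrying the canonical generator $x$ of $\A$ to $N_n$, so by construction $(\psi_n\circ\phi_n)(x)=N_n\to\pi(x)$ in norm.

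It remains to promote convergence on the generator to pointwise convergence on all of $\A$. Let $\mathcal{B}=\{a\in\A:\|(\psi_n\circ\phi_n)(a)-\pi(a)\|\to 0\}$. Because every $\psi_n\circ\phi_n$ and $\pi$ are unital, contractive *-homomorphisms, $\mathcal{B}$ is a norm-closed unital *-subalgebra of $\A$: closure under linear combinations and adjoints is immediate; closure under products follows from the uniform bound $\|(\psi_n\circ\phi_n)(a)\|\leq\|a\|$ together with $\|AB-A'B'\|\leq\|A\|\,\|B-B'\|+\|A-A'\|\,\|B'\|$; and norm-closedness follows from the same uniform bound by a $3\varepsilon$ argument. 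Since $\mathcal{B}$ contains $x$ (hence also $x^*$ and $I$) and $\A$ is generated as a unital $C^*$-algebra by $x$, we conclude $\mathcal{B}=\A$, which is the assertion of the theorem.

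The substance of the argument is entirely in Proposition~\ref{nillim}; the rest is bookkeeping. The only genuine points of care are that Proposition~\ref{nillim} must supply honest \emph{contractions} of \emph{exact} nilpotency order $n$, so that the defining relations of $\A_n$ hold and $\psi_n$ exists, and that convergence must be verified on all of $\A$ rather than merely on the dense set $\mathcal{P}_*$ of noncommutative *-polynomials in $x$ — both of which are taken care of by the closed-*-subalgebra observation above.
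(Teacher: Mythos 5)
Your proposal is correct and follows essentially the same route as the paper: apply Proposition~\ref{nillim} to the universal contraction $\pi(x)$ to get norm-approximating contractions $N_n$ with $N_n^n=0$, use the universal property of $\A_n$ to define $\psi_n$, and pass from convergence on the generator to pointwise convergence on all of $\A$ by density and uniform boundedness (a step the paper leaves implicit but which you correctly spell out).
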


\begin{proof}
By Proposition \ref{nillim} there exists a sequence $(T_n)\subseteq B(\H)$ that converges in norm to $\pi(x)$ and satisfies $\|T_n\|\leq 1$ and $T_n^n=0$ for each $n\geq 1$.
For each $n\geq 1$, let $$\psi_n:\A_n\to C^*(T_n,I)\subseteq B(\H)$$ be the canonical surjective unital *-homomorphsim induced by mapping $x_n\mapsto T_n$. 
Then, for any $a\in \A$, 
$$\|\psi_n\circ \phi_n(a)-a\|\to 0.\qedhere$$
\end{proof}

\begin{remark}
By Remark \ref{inside}, the identity map on $\A$ does not factor through the family $\{A_n\}_{n\geq 1},$ which means we cannot hope to extend this to a ``nuclearity" result.
\end{remark}

With Theorem \ref{approx}, we can give our final proof of Theorem \ref{nilprep}. 

\begin{proof}
The family $\{\phi_n\}$ from Theorem \ref{approx} separates the elements of $\A$. By \cite[Theorem 5]{Shu08}, for each $n \geq 1$ the $C^*$-algebra $\A_n$ is projective and hence also RFD by Proposition \ref{projRFD}.  Let $\{\rho_{n,k}\}_{k\in \N}$ be a separating family of finite-dimensional representations of $\A_n$.  Then the compositions 
$\{\rho_{n,k}\circ\phi_n\}_{n,k\in\N}$ form a separating family of finite-dimensional representations of $\A$, and, moreover, $\rho_{n,k}\circ\phi_n(x)$ is nilpotent for each $n, k\in \N$ where $x$ denotes the generator of $\A$. 
\end{proof}

We conclude this section with a slight generalization of Theorem \ref{nilprep}.

\begin{theorem}\label{lambda}
For each $\lambda\in \D$, $\A$ has a separating family of finite-dimensional representations that send the generator to contractive matrices with spectrum $\{\lambda\}$.
\end{theorem}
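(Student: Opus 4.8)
The plan is to imitate the proof of Theorem \ref{nilprep} via the asymptotic-factorization route (Proposition \ref{nillim} and Theorem \ref{approx}), replacing ``nilpotent'' by ``spectrum $\{\lambda\}$'' throughout. The key observation is that an operator $N$ is nilpotent of order $\leq n$ exactly when $(N-\lambda)$ satisfies $(N-\lambda)^n = 0$, but for a genuine spectral condition we should instead work with \emph{quasinilpotent-type} perturbations: for fixed $\lambda\in\D$, we want to approximate a universal contraction $T$ in norm by contractions $T_n$ with $\sigma(T_n)=\{\lambda\}$. First I would record the right substitute for Proposition \ref{nillim}: if $\pi$ is a faithful (hence essential, by Proposition \ref{essential}) representation of $\A$ on separable $\H$ and $x$ is the generator, then since $\sigma(\pi(x))=\overline{\D}$ is connected and contains $\lambda$, the Apostol–Foias–Voiculescu machinery \cite[Theorem 2.7]{AFV} applied to $\pi(x)-\lambda$ (which is bi-quasitriangular, with connected essential spectrum containing $0$, because essential RFD representations have all these properties and translating by a scalar preserves them) shows $\pi(x)-\lambda$ is a norm limit of nilpotents $N_n$; then $T_n := N_n + \lambda$ has $\sigma(T_n)=\{\lambda\}$ and $T_n \to \pi(x)$ in norm. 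Since $\lambda\in\D$, for $n$ large $\|T_n\|\le 1$ can be arranged (e.g.\ rescale $N_n$ slightly, or truncate), and moreover $(T_n-\lambda)^n=0$.

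Next I would set up the analogue of Theorem \ref{approx}. Define the universal unital $C^*$-algebras
$$\A_n^\lambda := C^*_u\langle x_n : \|x_n\|\leq 1,\ (x_n-\lambda)^n = 0\rangle,$$
and let $\phi_n^\lambda:\A\to\A_n^\lambda$ be induced by $x\mapsto x_n$. Using the $T_n$ from the previous paragraph, set $\psi_n:\A_n^\lambda\to C^*(T_n,I)\subseteq B(\H)$ to be the canonical surjection $x_n\mapsto T_n$; then $\psi_n\circ\phi_n^\lambda \to \pi$ pointwise in norm exactly as in Theorem \ref{approx}, and in particular the family $\{\phi_n^\lambda\}$ separates points of $\A$.

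Then the final step mirrors the last proof of Theorem \ref{nilprep}: I need each $\A_n^\lambda$ to be RFD. The cleanest way is to note $\A_n^\lambda \cong \A_n$ via the shift $x_n \leftrightarrow x_n+\lambda$ — but this is \emph{not} an isomorphism of the \emph{universal} $C^*$-algebras as defined, because the norm constraint $\|x_n\|\le 1$ is not preserved by translation by $\lambda$. So instead I would either (i) invoke \cite[Theorem 5]{Shu08} directly, checking that Shulman's projectivity argument for lifting nilpotent contractions adapts to lifting contractions $y$ with $(y-\lambda)^n=0$ (the relation is still a polynomial relation cutting out a ``nice'' closed set, and $\lambda\in\D$ keeps the ambient disk constraint from being vacuous), or (ii) bypass projectivity and show $\A_n^\lambda$ is RFD directly by a Choi-style compression argument as in Theorem \ref{FNT}: given a state achieving a norm, compress to the span of short $*$-monomials in $x_n$; the compression of $x_n$ need not satisfy $(M-\lambda)^n=0$ on the nose, but one can post-compose with a representation of $\A_n^\lambda$ only if the relation survives — so (i) is the safer path. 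Assuming $\A_n^\lambda$ is RFD with separating finite-dimensional family $\{\rho_{n,k}\}_k$, the compositions $\{\rho_{n,k}\circ\phi_n^\lambda\}_{n,k}$ are a separating family of finite-dimensional representations of $\A$ sending the generator to a contractive matrix with spectrum $\{\lambda\}$ (since $\rho_{n,k}(x_n)$ is a matrix with $(\rho_{n,k}(x_n)-\lambda)^n=0$, its only eigenvalue is $\lambda$, and it is nonzero as a representation provided we discard the trivial ones).

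The main obstacle I expect is precisely the RFD-ness (equivalently projectivity) of $\A_n^\lambda$: one must confirm that Shulman's lifting theorem for nilpotent contractions \cite[Theorem 5]{Shu08} — or at least its consequence via Proposition \ref{projRFD} — goes through when the relation $x_n^n=0$ is replaced by $(x_n-\lambda)^n=0$ under the \emph{same} contractivity constraint $\|x_n\|\le 1$. The translation trick fails at the level of universal $C^*$-algebras, so this cannot be quoted verbatim; but because $\lambda\in\D$ strictly, any contraction $y$ with $(y-\lambda)^n=0$ automatically has $\|y-\lambda\|<$ some constant $<1-|\lambda|$ in a suitable sense... actually more carefully: $(y-\lambda)$ is nilpotent of order $n$, and the contractivity of $y$ forces a uniform bound on $\|y-\lambda\|$ — this is the point one must make precise, after which lifting a contraction $z$ with $z^n=0$ in a quotient and checking $z+\lambda$ is (approximately) a contraction that lifts $y$ recovers the needed projectivity. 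Everything else is a routine transcription of the arguments already in Section \ref{nilp}.
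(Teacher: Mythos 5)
Your plan is essentially the paper's proof: apply Proposition \ref{nillim} (i.e.\ Apostol--Foias--Voiculescu) to $\pi(x)-\lambda$, rescale to recover contractivity, factor asymptotically through $\A_{\lambda,n}=C^*_u\langle x_n:\|x_n\|\le 1,\ (x_n-\lambda)^n=0\rangle$ as in Theorem \ref{approx}, and finish using that each $\A_{\lambda,n}$ is RFD. The one step you flag as the main obstacle --- RFD-ness of $\A_{\lambda,n}$, where you rightly note the translation trick fails --- is exactly what the paper disposes of by citing \cite[Theorem 10]{LS14}, which asserts this directly; no adaptation of \cite{Shu08} is needed. Two small cautions: your parenthetical claim that contractivity of $y$ with $(y-\lambda)^n=0$ forces $\|y-\lambda\|$ below a constant less than $1-|\lambda|$ is false (only $\|y-\lambda\|\le 1+|\lambda|$ holds in general), but nothing in the argument uses it; and ``rescale $N_n$ slightly'' does work but requires the explicit choice $T_n=c_nN_n+\lambda I$ with $c_n=(1-|\lambda|)/(\|N_n+\lambda I\|-|\lambda|)$ when $\|N_n+\lambda I\|>1$, as in the paper's Lemma \ref{lem}, to verify both $\|T_n\|\le 1$ and $(T_n-\lambda I)^n=0$ simultaneously.
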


The proof due to Herrero can be adapted to show this by replacing \cite[Corollary 4.8]{Her81} with the remark in \cite{Her81} just after it. 
In the remainder of this section we sketch how our second proof can also be adapted to prove Theorem \ref{lambda}.  

As it turns out, by \cite[Theorem 10]{LS14}, the universal $C^*$-algebra
$$\A_{\lambda,n}=C^*_u\langle x_n: \|x_n\|\leq 1,\ (x_n-\lambda)^n=0\rangle$$
is RFD for each $n\geq 1$.
So all we need is another ``exactness" result, like Theorem \ref{approx}, which says that every faithful representation $(\pi, \H)$ of $\A$ asymptotically factorizes through the $\A_{\lambda,n}$.
To define the maps $\psi_n$ as in Theorem \ref{approx}, it was crucial that a universal contraction $T$ is the norm limit of nilpotent contractions $T_n$, each with appropriate nilpotency order. For this more general setting, we need $T$ to be the norm limit of contractions $T_n$ satisfying $(T_n-\lambda I)^n=0$ for each $n\geq 1$, which we show in Lemma \ref{lem} below.  The rest of the argument for Theorem \ref{lambda} will then run like the third proof of Theorem \ref{nilprep}. 

\begin{lemma}\label{lem}
Let $\H$ be a separable Hilbert space, $T\in B(\H)$ be a universal contraction operator, and $\lambda\in \D$. Then there exists a sequence of contractive operators $(T_n)\subseteq B(\H)$ that converges in norm to $T$ and satisfies $(T_n-\lambda I)^n=0$ for each $n\geq 1$.
\end{lemma}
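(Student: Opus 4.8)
The plan is to adapt the $\lambda=0$ argument that is already built into Proposition \ref{nillim}: approximate by nilpotents, then fix up contractivity and the nilpotency order. Let $\pi\colon\A\to C^*(T,I)\subseteq B(\H)$ be the faithful representation sending the canonical generator $x$ to $T$, so that $T-\lambda I=\pi(x-\lambda\cdot 1)$. Since $\sigma(T)=\overline{\D}$ we have $\sigma(T-\lambda I)=\overline{\D}-\lambda$, which is connected and contains $0$ precisely because $\lambda\in\D\subseteq\overline{\D}$. Applying Proposition \ref{nillim} to the element $x-\lambda\cdot 1$ of the projective algebra $\A$, I obtain nilpotent operators $N_k\in B(\H)$, say with $N_k^{m_k}=0$, such that $N_k\to T-\lambda I$ in norm; equivalently, $\lambda I+N_k\to T$.

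The key step is to turn each $\lambda I+N_k$ into a genuine contraction without disturbing the fact that its ``$\lambda$-part'' is nilpotent. One cannot simply rescale the whole operator, since $c(\lambda I+N_k)$ has spectrum $\{c\lambda\}$, moving the eigenvalue off $\lambda$; instead I would rescale only the nilpotent summand. Put $\delta_k:=\|\lambda I+N_k\|$, so $\delta_k\to\|T\|\leq 1$, and set $c_k:=1$ when $\delta_k\leq 1$ and $c_k:=(1-|\lambda|)/(\delta_k-|\lambda|)\in(0,1)$ when $\delta_k>1$; either way $c_k\to 1$. Writing $\lambda I+c_kN_k=(1-c_k)\lambda I+c_k(\lambda I+N_k)$ and using $|\lambda|<1$, one checks in a line that $\|\lambda I+c_kN_k\|\leq(1-c_k)|\lambda|+c_k\delta_k\leq 1$. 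Hence $S_k:=\lambda I+c_kN_k$ is a contraction with $(S_k-\lambda I)^{m_k}=(c_kN_k)^{m_k}=0$, and since the $\|N_k\|$ are bounded and $c_k\to 1$ we still have $S_k\to T$. I expect this to be the crux of the argument, and it is the only place where $\lambda\in\D$ rather than $\lambda\in\overline{\D}$ is used: the naive approximants $\lambda I+N_k$ have norm only slightly above $1$, and it is the room left by $|\lambda|<1$ that lets the excess be absorbed into the nilpotent summand.

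Finally I would reindex to get the precise nilpotency order $n$. The sets $\mathcal{C}_n:=\{A\in B(\H):\|A\|\leq1,\ (A-\lambda I)^n=0\}$ increase with $n$, and each $S_k$ lies in $\mathcal{C}_{m_k}$, so $T$ lies in the closure of $\bigcup_n\mathcal{C}_n$; because the $\mathcal{C}_n$ are increasing, this forces $\mathrm{dist}(T,\mathcal{C}_n)\to 0$. Choosing $T_n\in\mathcal{C}_n$ with $\|T_n-T\|<\mathrm{dist}(T,\mathcal{C}_n)+\tfrac1n$ then yields a sequence of contractions with $(T_n-\lambda I)^n=0$ converging in norm to $T$, which is the statement of the lemma. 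With Lemma \ref{lem} available, Theorem \ref{lambda} follows by rerunning the third proof of Theorem \ref{nilprep} verbatim, using the $T_n$ just constructed to build the maps $\psi_n\colon\A_{\lambda,n}\to C^*(T_n,I)$ and invoking \cite[Theorem 10]{LS14} in place of \cite[Theorem 5]{Shu08}.
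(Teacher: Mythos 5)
Your proof is correct and follows essentially the same route as the paper: apply Proposition \ref{nillim} to $T-\lambda I$ (whose spectrum $\overline{\D}-\lambda$ is connected and contains $0$), then rescale only the nilpotent summand via the same coefficient $c_k=(1-|\lambda|)/(\|N_k+\lambda I\|-|\lambda|)$ and the same convexity estimate to restore contractivity without moving the eigenvalue off $\lambda$. The only cosmetic difference is that the paper arranges $N_n^n=0$ at the outset (via the ``easy adjustment'' noted in Proposition \ref{nillim}) whereas you reindex through the increasing sets $\mathcal{C}_n$ at the end; both are fine.
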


\begin{proof}
Since $\sigma(T-\lambda I)=\bar{\D}-\lambda$ is connected and contains $0$, by Proposition \ref{nillim}, there is a sequence $(N_n)\subseteq B(\H)$ of nilpotents so that $N_n\to T-\lambda I$ in norm and $N_n^n=0$ for each $n\geq 1$. Then $N_n+\lambda I\to T$ in norm; 
however, each $N_n+\lambda I$ may not be a contraction. To remedy this, we define $T_n:=c_nN_n+\lambda I$ for each $n\geq 1$ where
 \begin{displaymath}
   c_n = \left\{
     \begin{array}{lr}
       1 & : \|N_n+\lambda I\|\leq 1 \\
       \frac{1-|\lambda|}{\|N_n+\lambda I\|-|\lambda|} & : \|N_n+\lambda I\|>1
     \end{array}.
   \right.
\end{displaymath} 
Then $(T_n-\lambda I)^n=(c_nN_n)^n=0$ for each $n\geq 1$. Since $\|N_n+\lambda I\|\to \|T\|=1$, it follows that $c_n\to 1$ and hence that $T_n\to T$ in norm. Moreover, if $\|N_n+\lambda I\|\leq 1$, then $T_n=c_nN_n+\lambda I=N_n+\lambda I$ is a contraction. If $\|N_n+\lambda I\|>1$, then $1>c_n$, and we have that 
\begin{align*}
    \|T_n\|&=\|c_nN_n+\lambda I\|=\|c_n(N_n+\lambda I)+(1-c_n)\lambda I\|\leq c_n\|N_n+\lambda I\|+|\lambda||1-c_n|\\
    &=c_n\|N_n+\lambda I\|+|\lambda|-|\lambda|c_n=c_n(\|N_n+\lambda I\|-|\lambda|)+|\lambda|=(1-|\lambda|)+|\lambda|=1.
\end{align*}
Thus $(T_n)$ is the desired sequence.
\end{proof}

\begin{remark}
We cannot replace $\lambda\in \D$ from Theorem \ref{lambda} with $\lambda \in \C\backslash\D$ in either of the preceding results. Of course, if $|\lambda|>1$, then no contraction $Y \in B(\H)$ can satisfy $(Y-\lambda I)^n=0$.   
And if $Y \in B(\H)$ is any contraction with $(Y-\lambda I)^n=0$ for some $n\geq 1$ and some $\lambda\in \T$, then $Y=\lambda I$. Indeed, by \cite[Lemma 4]{LS14} we know that $Y$ is unitarily equivalent to an upper triangular array $(y_{ij})_{i,j\geq 1}$ with $y_{ii}=\lambda$ for $i\geq 1$. But, since $\|Y\|\leq 1$, it follows that $y_{ij}=0$ for $i\neq j$, and $Y=\lambda I$. In other words, for any $\lambda\in \T$ and $n\geq 1$, $$C^*_u\langle x: \|x\|\leq 1, (x-\lambda)^n=0\rangle\simeq \C$$ via the map $x\mapsto \lambda$. 

This argument shows that $\|N+\lambda I\|> 1$ for any $\lambda\in \T$ and any nonzero nilpotent operator $N$. (Just take $y= N+\lambda I$, and run the previous argument as a contradiction.) But this fails if we assume only that $N$ is quasinilpotent, i.e., $\sigma(N)=\{0\}$. For example, let $V$ be the Volterra integration operator, and $N=(I+V)^{-1}-I$. We know from \cite[Problem 190]{Hal82} that $\|N+I\|=\|(I+V)^{-1}\|=1$, and $\sigma(N)=\{0\}$. However, $N\neq 0$. 
\end{remark}

\section{$\A$ and Connes' Embedding Problem} \label{CEP}
As we mentioned in the introduction, $\A$ often behaves much like $C^*(\F_2)$, in particular when it comes to the role of $C^*(\F_2)$ in Kirchberg's characterizations of Connes' Embedding Problem. In this section, we elucidate the relation of $\A$ to Connes' Embedding Problem and consider some related questions.
Before we begin, we must first establish some relevant background. 
\subsection{The WEP, LLP, and some results of Kirchberg}\label{WEPLLP}

For two $C^*$-algebras $A$ and $B$, $A\otimes_{max} B$ and $A\otimes_{min} B$ refer to the completion of the algebraic tensor product $A\odot B$ of $A$ and $B$ with respect to the maximal and minimal $C^*$-norms, respectively. The embedding $A\odot B\to A\otimes_{min} B$ extends uniquely to a surjective *-homomorphism $A\otimes_{max}B\to A\otimes_{min}B$. 
See \cite[Chapter 3]{BO} for definitions and other relevant properties, and see \cite[Section 2]{Bla85} for a short discussion on universal $C^*$-algebras and tensor products. 

Though it is readily verifiable that, for any $C^*$-algebras $A,B$ and $C$ with $A\subseteq B$, 
$$A\otimes_{min}C\subseteq B\otimes_{min} C,$$ this containment can easily fail for the maximal tensor product because subalgebras tend to have more representations.  In \cite{Lan73}, Lance characterized embeddings of $C^*$-algebras that are always preserved under the maximal tensor product in terms of completely positive extensions (see \cite[Proposition 3.6.6]{BO} for a proof). We say a linear map is {\it positive} if it sends positive elements to positive elements and {\it completely positive} if this remains true even after amplification with matrix algebras. If a completely positive map is contractive, we call it cpc, and if it is moreover unital, we call it ucp. 
We give Lance's result as a proposition/definition hybrid. 

\begin{proposition}\label{rwi}
Let $A$ and $B$ be unital $C^*$-algebras and $1_B\in A\subset B$ an inclusion. The inclusion is called \underline{relatively weakly injective} if the following equivalent conditions hold. 
\begin{enumerate}
\item There exists a ucp map $\phi: B\to \pi_u(A)''$ so that $\phi(a)=\pi_u(a)$ for all $a\in A$, where $(\pi_u,\mathcal{H}_u)$ is the universal representation of $A$ from the Gelfand-Naimark Theorem. 
\item For any $C^*$-algebra $C$, there is a natural inclusion $$A\otimes_{max} C\subseteq B\otimes_{max} C.$$
\end{enumerate}
\end{proposition}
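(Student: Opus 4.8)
The plan is to prove Lance's theorem along the lines of \cite[Proposition 3.6.6]{BO} (the result is originally \cite{Lan73}). The engine is the universal property of the maximal tensor product: for $C^*$-algebras $D,E$, a unital $*$-homomorphism $D\otimes_{max}E\to B(\K)$ is precisely a pair of unital representations of $D$ and $E$ on $\K$ with commuting ranges, and $\|z\|_{D\otimes_{max}E}$ is the supremum of $\|(\rho\cdot\tau)(z)\|$ over all such pairs. I will combine this with Stinespring dilation, Arveson's extension theorem, and the standard fact about multiplicative domains (if a ucp map restricts to a $*$-homomorphism on a $C^*$-subalgebra, that subalgebra lies in the multiplicative domain and the map is a bimodule map over it). Note at the outset that by functoriality of $\otimes_{max}$ there is always a contractive $*$-homomorphism $A\otimes_{max}C\to B\otimes_{max}C$ (restrict commuting pairs on $B,C$ to $A,C$); condition (2) asserts that it is isometric.

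For $(1)\Rightarrow(2)$, fix $C$ and represent $A\otimes_{max}C$ faithfully on a Hilbert space $\H$, giving commuting representations $\sigma\colon A\to B(\H)$ and $\tau\colon C\to B(\H)$; it suffices to show $\|(\sigma\cdot\tau)(z)\|\le\|z\|_{B\otimes_{max}C}$ for $z\in A\odot C$. First extend $\sigma$ to a normal representation $\bar\sigma$ of $\pi_u(A)''$; then $\bar\sigma(\pi_u(A)'')\subseteq\sigma(A)''$, and since $\tau(C)\subseteq\sigma(A)'$ we get $\sigma(A)''\subseteq\tau(C)'$, so $\Psi:=\bar\sigma\circ\phi\colon B\to B(\H)$ is a ucp map that agrees with $\sigma$ on $A$ and whose range commutes with $\tau(C)$. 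Take a minimal Stinespring dilation $\Psi(b)=W^*\rho(b)W$ with $\rho\colon B\to B(\K)$ a representation and $W\colon\H\to\K$ an isometry. Because $\Psi|_A=\sigma$ is a $*$-homomorphism, $A$ lies in the multiplicative domain of $\Psi$, which forces $\rho(a)W=W\sigma(a)$. Using that $\Psi(B)$ commutes with $\tau(C)$, define $\tilde\tau(c)\,\rho(b)W\xi:=\rho(b)W\tau(c)\xi$; a short estimate (bounding $\tau(c)^*\tau(c)\le\|c\|^2 I$ and absorbing it through the commuting relation) shows $\tilde\tau$ is a well-defined representation of $C$ on $\K$ commuting with $\rho(B)$ and satisfying $\tilde\tau(c)W=W\tau(c)$. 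Then $\rho\cdot\tilde\tau\colon B\otimes_{max}C\to B(\K)$ obeys $W^*(\rho\cdot\tilde\tau)(z)W=(\sigma\cdot\tau)(z)$ for $z\in A\odot C$ (again the key input is that each $a\in A$ is in the multiplicative domain, so the compression is exact), whence $\|(\sigma\cdot\tau)(z)\|\le\|(\rho\cdot\tilde\tau)(z)\|\le\|z\|_{B\otimes_{max}C}$.

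For $(2)\Rightarrow(1)$, take $D:=\pi_u(A)'\subseteq B(\mathcal{H}_u)$, viewed as a $C^*$-algebra. The identity representations of $\pi_u(A)$ and of $D$ on $\mathcal{H}_u$ commute, so $(a,d)\mapsto\pi_u(a)d$ induces a unital $*$-homomorphism $A\otimes_{max}D\to B(\mathcal{H}_u)$. By (2), $A\otimes_{max}D\subseteq B\otimes_{max}D$, so Arveson's extension theorem provides a ucp map $\Theta\colon B\otimes_{max}D\to B(\mathcal{H}_u)$ extending it. On $1_B\otimes D$, $\Theta$ restricts to the identity of $D$, which is a $*$-homomorphism, so $1_B\otimes D$ lies in the multiplicative domain of $\Theta$ and $\Theta$ is a $D$-bimodule map. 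Since $b\otimes 1_D$ and $1_B\otimes d$ commute in $B\otimes_{max}D$, applying $\Theta$ gives $\Phi(b)d=d\Phi(b)$ for all $b\in B$, $d\in D$, where $\Phi(b):=\Theta(b\otimes 1_D)$; hence $\Phi(B)\subseteq D'=\pi_u(A)''$. Finally $\Phi$ is ucp (since $b\mapsto b\otimes 1_D$ is a unital $*$-homomorphism) and $\Phi(a)=\Theta(a\otimes 1_D)=\pi_u(a)$ for $a\in A$, which is exactly condition (1).

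I expect the main obstacle to be the step in $(1)\Rightarrow(2)$ that builds the representation $\tilde\tau$ of $C$ on the Stinespring space $\K$: one must check it is well defined and bounded and that it commutes with $\rho(B)$, and this is precisely where the hypothesis is used essentially, via the fact that $\Psi=\bar\sigma\circ\phi$ lands in the bicommutant $\sigma(A)''$ and therefore commutes with $\tau(C)$. In $(2)\Rightarrow(1)$ the only real choice is the test algebra $C=\pi_u(A)'$, which is rigged so that the multiplicative-domain/bimodule argument forces the extension into $\pi_u(A)''$; the rest is routine bookkeeping with Arveson's theorem and multiplicative domains.
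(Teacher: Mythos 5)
Your argument is correct and is essentially the standard proof of Lance's theorem that the paper itself delegates to by citing \cite[Proposition 3.6.6]{BO} (the paper gives no proof of its own, only this reference): the Stinespring/multiplicative-domain construction of $\tilde\tau$ for $(1)\Rightarrow(2)$ and the choice of test algebra $C=\pi_u(A)'$ with Arveson extension for $(2)\Rightarrow(1)$ are exactly the steps of that proof. No gaps; the well-definedness estimate for $\tilde\tau$ via $\tau(c^*c)\le\|c\|^2$ commuting past the positive matrix $[\Psi(b_j^*b_i)]$ goes through as you indicate.
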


The map $\phi:B\to \pi_u(A)''$ is called a {\it weak expectation}. 
If a $C^*$-algebra $A$ embeds relatively weakly injectively into $B(\mathcal{H})$ for any Hilbert space $\mathcal{H}$, then it is said to have Lance's {\it Weak Expectation Property} (WEP). 

\begin{proposition}\label{projrwi}
If $A$ is a projective $C^*$-algebra and $B$ surjects onto $A$, then $A$ embeds relatively weakly injectively into $B$.
\end{proposition}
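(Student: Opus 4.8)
The plan is to use the projectivity of $A$ to split the surjection $B\to A$ by a $*$-homomorphism, and then to observe that such a splitting automatically produces a weak expectation --- in fact a genuine $*$-homomorphic conditional expectation --- so that condition (1) of Proposition \ref{rwi} holds.

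First I would reduce to the unital case. If $A$ and $B$ are unital, interpret projectivity in the unital category as in the conventions above; otherwise pass to unitizations, using that $\widetilde{A}$ is projective in the unital category by \cite[Proposition 2.5]{Bla85}, that $\widetilde{B}$ surjects onto $\widetilde{A}$, and that a unital splitting $\widetilde{A}\to\widetilde{B}$ restricts to a splitting $A\to B$ (the scalar part of the image of an element of $A$ must vanish). So assume $1_B\in A$ and that $\pi:B\to A$ is a unital surjective $*$-homomorphism.

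Next, apply projectivity to the quotient map $\pi:B\to B/\ker\pi = A$: the identity $\mathrm{id}_A:A\to A$ lifts through $\pi$ to a unital $*$-homomorphism $\psi:A\to B$ with $\pi\circ\psi=\mathrm{id}_A$. Then $\psi$ is injective and identifies $A$ with a unital $C^*$-subalgebra $\psi(A)\subseteq B$, and it is this inclusion that we must show is relatively weakly injective. Put $E:=\psi\circ\pi:B\to B$. This is a unital $*$-homomorphism with range $\psi(A)$, and $E\circ\psi=\psi\circ(\pi\circ\psi)=\psi$, so $E$ restricts to the identity on $\psi(A)$; hence $E$ is an idempotent, and a fortiori ucp, conditional expectation of $B$ onto $\psi(A)$.

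Finally, I would verify condition (1) of Proposition \ref{rwi}. Identify $A$ with $\psi(A)$, let $(\pi_u,\mathcal{H}_u)$ be the universal representation of $A$, and set $\phi:=\pi_u\circ E:B\to\pi_u(A)\subseteq\pi_u(A)''$, where $E$ is regarded as a unital $*$-homomorphism onto $\psi(A)\cong A$. As a composition of $*$-homomorphisms, $\phi$ is a unital $*$-homomorphism, hence ucp, and $\phi(a)=\pi_u(E(a))=\pi_u(a)$ for all $a\in A$, so $\phi$ is a weak expectation; Proposition \ref{rwi} then gives the claim. I do not expect any genuine obstacle here: the one substantive ingredient is the $*$-homomorphic lifting supplied by projectivity, and once it is in hand the conditional expectation --- which is strictly stronger than a mere weak expectation into the bidual --- comes for free.
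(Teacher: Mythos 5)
Your proposal is correct and follows essentially the same route as the paper: projectivity is used to lift $\mathrm{id}_A$ through the surjection $B\to A$, giving an embedding $A\hookrightarrow B$, and the weak expectation is then just the composition of the surjection $B\to A$ with the universal representation of $A$. The extra observations (the unitization reduction and the fact that one actually gets a $*$-homomorphic conditional expectation onto the image, which is stronger than a weak expectation) are correct but not needed beyond what the paper records.
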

Indeed, the identity map $id_A:A\to A$ lifts to a *-homomorphism $\iota:A\to B$, which must be injective since $id_A$ is.  The weak expectation $B \to \pi_u(A)$ is just the induced surjective *-homomorphism $B\to \pi_u(A)$. 

Let $A$ and $B$ be $C^*$-algebras and $J$ a closed two-sided ideal in $B$ with quotient map $\pi:B\to B/J$. A cpc map $\phi:A\to B/J$ is \textit{liftable} if there is a cpc map $\tilde{\phi}:A\to B$ such that $\pi\circ \tilde{\phi}=\phi$. We say a cpc map $\phi:A\to B/J$ is \textit{locally liftable} if for any finite dimensional operator system $S\subseteq A$ there is a cpc map $\tilde{\phi}:S\to B$ such that $\pi\circ \tilde{\phi}=\phi|_S$. 

A unital $C^*$ algebra $A$ has the \textit{(local) lifting property} or (L)LP if any ucp\footnote{Yes, we did unceremoniously shift from cpc maps to ucp maps, but it turns out to be sufficient to restrict ourselves to ucp maps. Using the fact that the set of liftable cpc maps from a separable operator system into a $C^*$-quotient is closed (\cite[Theorem 6]{Arv77}), one can argue (as in \cite[Lemma 13.1.2]{BO}) that any ucp map with a (local) cpc lift has a (local) ucp lift.} map from $A$ into a quotient $C^*$-algebra is locally liftable. A non-unital $C^*$-algebra has the (L)LP if and only if its unitization does.

We can view the LP as projectivity in the category of unital $C^*$-algebras with (unital) cpc maps. Then the LLP can be thought of as a ``localized ucp projectivity" and the WEP as ``weak injectivity." 

\begin{proposition}\label{P:rwi} \cite[Corollaries 2.6 and 3.3]{Kir93} The LLP and WEP pass to relatively weakly injective subalgebras.
\end{proposition}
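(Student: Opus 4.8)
The plan is to reduce both assertions to two elementary facts about tensor products already in play in this section, together with Kirchberg's tensor-product reformulations of the WEP and the LLP. It suffices to treat unital $C^*$-algebras and a unital relatively weakly injective inclusion $1_B\in A\subseteq B$ as in Proposition \ref{rwi}; the general case follows because WEP, LLP, and relative weak injectivity all interact well with unitization.

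The heart of the matter is a single observation: \emph{if $1_B\in A\subseteq B$ is relatively weakly injective and $D$ is any $C^*$-algebra for which the canonical surjection $B\otimes_{max}D\to B\otimes_{min}D$ is an isomorphism, then the same is true with $B$ replaced by $A$.} To see this, view the algebraic tensor product $A\odot D$ as a $*$-subalgebra of $B\odot D$. By Proposition \ref{rwi}(2) the inclusion $A\otimes_{max}D\subseteq B\otimes_{max}D$ is isometric, so the norm $\|\cdot\|_{A\otimes_{max}D}$ is exactly the restriction of $\|\cdot\|_{B\otimes_{max}D}$ to $A\odot D$; likewise $\|\cdot\|_{A\otimes_{min}D}$ is the restriction of $\|\cdot\|_{B\otimes_{min}D}$, since the minimal tensor product always preserves inclusions. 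The hypothesis says $\|\cdot\|_{B\otimes_{max}D}=\|\cdot\|_{B\otimes_{min}D}$ on $B\odot D$, hence on the subspace $A\odot D$, and chaining the three identifications gives $\|\cdot\|_{A\otimes_{max}D}=\|\cdot\|_{A\otimes_{min}D}$, i.e. $A\otimes_{max}D=A\otimes_{min}D$.

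To finish, I would invoke Kirchberg's characterizations (\cite{Kir93}; see also \cite[Chapter 13]{BO}): a unital $C^*$-algebra $E$ has the WEP if and only if $E\otimes_{max}C^*(\F_\infty)=E\otimes_{min}C^*(\F_\infty)$, and it has the LLP if and only if $E\otimes_{max}B(\ell^2)=E\otimes_{min}B(\ell^2)$. Applying the observation above with $D=C^*(\F_\infty)$ shows that the WEP passes from $B$ to $A$, and with $D=B(\ell^2)$ that the LLP does.

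The only genuinely deep inputs here are Kirchberg's two characterizations, but those are exactly the results being imported from \cite{Kir93} in the first place, so there is no real obstacle once they are granted; the one routine point to check is the reduction to the unital case. I note that the WEP half also admits a proof that bypasses Kirchberg's characterization: relative weak injectivity is transitive (immediate from Proposition \ref{rwi}(2), since $A\otimes_{max}C\subseteq B\otimes_{max}C\subseteq D\otimes_{max}C$ whenever $A\subseteq B\subseteq D$ are successive relatively weakly injective inclusions), so representing $B$ faithfully and unitally on a Hilbert space $\H$ exhibits $A$ as relatively weakly injective in $B(\H)$, which is one of the standard equivalent forms of the WEP for $A$.
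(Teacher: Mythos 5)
The paper offers no argument for this proposition at all --- it is imported verbatim as Corollaries 2.6 and 3.3 of \cite{Kir93} --- so any proof you give is necessarily ``a different route.'' Your argument is correct and is in fact the standard derivation (cf.\ the treatment in \cite[Chapter 13]{BO}): the key observation that ``$\otimes_{max}=\otimes_{min}$ against a fixed partner $D$'' passes to relatively weakly injective subalgebras follows cleanly from Proposition \ref{rwi}(2) together with injectivity of the minimal tensor product, and feeding in $D=C^*(\F_\infty)$ and $D=B(\ell^2)$ with Kirchberg's tensorial characterizations of the WEP and LLP finishes the job. It is worth separating the two halves by depth: your alternative argument for the WEP (transitivity of relative weak injectivity, read off from Proposition \ref{rwi}(2), applied to $A\subseteq B\subseteq B(\H)$) is genuinely elementary, needing only Lance's result and the standard fact that having a weak expectation is independent of the choice of faithful representation; by contrast, the LLP half as you present it rests on Kirchberg's theorem that $C^*(\F)\otimes_{max}B(\H)=C^*(\F)\otimes_{min}B(\H)$, which is the deepest result of the very paper being cited, so there the reduction is not really ``cheaper'' than the citation --- it just makes explicit how Kirchberg's corollary follows from his main theorem. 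The only loose end is your one-sentence appeal to unitization for the non-unital case; since the paper only defines relative weak injectivity for unital inclusions $1_B\in A\subseteq B$, this is harmless here.
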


A key class of examples comes from Kirchberg \cite[Lemma 3.3]{Kir94}. 

\begin{lemma}
For any free group $\F$, $C^*(\F)$ has the LLP. Moreover, $C^*(\F)$ has the LP if $\F$ has countably many generators. 
\end{lemma}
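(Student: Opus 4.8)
The plan is to strip away reductions until only one claim remains: that $C^*(\F_\infty)$ has the LP. Since the LLP is a \emph{local} property, to get it for an arbitrary free group $\F$ it suffices to get it for free groups on finitely many generators: given a ucp map $\phi\colon C^*(\F)\to B/J$ and a finite-dimensional operator system $E\subseteq C^*(\F)$, after a small perturbation $E$ lies inside the copy of $C^*(\F_n)$ generated by the finitely many generators of $\F$ occurring in a basis of $E$, and a local lift of $\phi|_{C^*(\F_n)}$ yields one of $\phi|_E$ up to a controlled error. For finite $n$, the group retraction $\F_\infty\to\F_n$ that kills the extra generators induces a conditional expectation $C^*(\F_\infty)\to C^*(\F_n)$; composing with the universal representation shows $C^*(\F_n)$ is relatively weakly injective in $C^*(\F_\infty)$ in the sense of Proposition \ref{rwi}(1), so by Proposition \ref{P:rwi} the LLP of $C^*(\F_\infty)$ descends to $C^*(\F_n)$. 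The same retraction, together with the fact that every countably generated free group is isomorphic to some $\F_n$ or to $\F_\infty$, makes the LP of $C^*(\F_\infty)$ descend to $C^*(\F)$ for every countably generated $\F$. So it all comes down to: $C^*(\F_\infty)$ has the LP.

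To prove that, I would realize $C^*(\F_\infty)$ as the full free product (amalgamated over $\C$) $\ast_{i\in\N} C(\T)$, one copy of $C(\T)=C^*(\Z)$ per generator. Each factor is separable and nuclear, so by the Choi--Effros lifting theorem it has the LP: every ucp map from $C(\T)$ into a quotient lifts to a ucp map. Given a surjection $\pi\colon B\to B/J$ and a ucp map $\phi\colon \ast_i C(\T)\to B/J$, restrict $\phi$ to the $i$-th factor to get $\phi_i\colon C(\T)\to B/J$, lift each to a ucp $\widetilde\phi_i\colon C(\T)\to B$, and then splice the $\widetilde\phi_i$ into a single ucp map $\widetilde\phi\colon\ast_iC(\T)\to B$ via the free-product-of-completely-positive-maps construction of Boca, doing so through Stinespring dilations chosen so that the whole package is compatible with $\pi$ and hence $\pi\circ\widetilde\phi=\phi$. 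This is the skeleton of Kirchberg's argument in \cite{Kir94}.

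The hard part is precisely this last splicing, because --- unlike a $\ast$-homomorphism --- a ucp map on a free product is \emph{not} determined by its restrictions to the free factors. Thus one cannot merely take ``the free product of the $\widetilde\phi_i$'' and declare victory: a priori $\pi\circ\widetilde\phi$ and $\phi$ would be known to agree only factor by factor. (The same issue scuttles the naive approach via the generators alone: the images $\phi(u_i)\in B/J$ are unitaries, which lift to contractions $b_i\in B$, which dilate to unitaries $\bigl(\begin{smallmatrix} b_i & (1-b_ib_i^*)^{1/2}\\ (1-b_i^*b_i)^{1/2} & -b_i^*\end{smallmatrix}\bigr)$ in $M_2(\widetilde B)$; compressing the resulting $\ast$-representation lifts the \emph{multiplicative} extension of $i\mapsto\phi(u_i)$, not $\phi$ itself.) The remedy is to keep track of Stinespring data: by the universal property of the full free product, $\phi$ is itself a free product of dilations of the $\phi_i$ over a common ambient space, and one must choose the lifts $\widetilde\phi_i$ so that their dilations can be freely multiplied over a common space in a $\pi$-equivariant fashion. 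Making this choice --- which is where the nuclearity (indeed residual finite-dimensionality) of the factors $C(\T)$ is used --- is the technical heart of the proof.
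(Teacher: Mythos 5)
The paper does not actually prove this lemma; it cites Kirchberg \cite[Lemma 3.3]{Kir94} and moves on, so there is no internal argument to compare yours against. Judged on its own terms, your proposal has a genuine gap at exactly the point where all the content of the lemma lives. Your reductions are sound: passing from an arbitrary free group to finitely generated ones by perturbing a finite-dimensional operator system into some $C^*(\F_n)$ (with Arveson's closedness of the set of liftable maps absorbing the ``controlled error''), and descending the LP/LLP from $C^*(\F_\infty)$ to $C^*(\F_n)$ via the conditional expectation induced by the group retraction, are standard and correct. But everything is thereby funneled into the single claim that $C^*(\F_\infty)\simeq \ast_{i\in\N}C(\T)$ has the LP, and there your argument stops being a proof. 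You correctly diagnose why the naive strategies fail --- a ucp map on a full free product is not determined by its restrictions to the free factors, and the unitary-dilation trick only lifts the multiplicative extension of the generator assignment --- and then you assert that the remedy is to ``keep track of Stinespring data'' and choose the lifts $\widetilde\phi_i$ so that their dilations ``can be freely multiplied over a common space in a $\pi$-equivariant fashion,'' deferring this as ``the technical heart.'' That deferred step \emph{is} Kirchberg's lemma: nothing you have written explains why compatible, $\pi$-equivariant lifted dilation data exist, and it does not follow from Boca's construction alone, which produces \emph{a} ucp extension of prescribed restrictions rather than one whose composition with the quotient map has a prescribed Stinespring dilation downstairs.

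Two smaller points. Your parenthetical claim that nuclearity (or residual finite dimensionality) of the factors $C(\T)$ is what makes the splicing work is unsubstantiated and, as far as I can tell, misplaced: nuclearity of $C(\T)$ enters only to give each free factor the LP via Choi--Effros, and the splicing difficulty is identical for arbitrary separable factors with the LP. Also, for the LP reduction you implicitly need that the LP (not just the LLP) descends along a genuine conditional expectation onto a unital subalgebra; this is true and easy (extend the given ucp map along the expectation, lift, restrict), but it is not what Proposition \ref{P:rwi} asserts, so it deserves a sentence. As written, your proposal is an accurate map of where the difficulty sits, not a proof that surmounts it.
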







With this, we can readily prove the following proposition, which greatly simplifies checking whether or not a given $C^*$-algebra has the LLP. 
This is a mild rephrasing of \cite[Corollary 13.1.4]{BO}, and we include a brief argument for the reader's convenience. 

\begin{proposition}
A unital $C^*$-algebra $A$ has the LLP if and only if the identity on $A$ is locally liftable.
\end{proposition}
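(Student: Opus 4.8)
The plan is to prove the nontrivial direction: if the identity map $\mathrm{id}_A$ is locally liftable, then $A$ has the LLP. (The forward direction is immediate: if $A$ has the LLP, then in particular the ucp map $\mathrm{id}_A : A \to A = A/\{0\}$ into the trivial quotient is locally liftable.) So assume $\mathrm{id}_A$ is locally liftable, let $J \trianglelefteq B$ be a closed two-sided ideal with quotient map $\pi : B \to B/J$, and let $\phi : A \to B/J$ be a ucp map. I must show $\phi$ is locally liftable, i.e., that its restriction to every finite-dimensional operator system $S \subseteq A$ admits a cpc (equivalently, by the footnote, ucp) lift $S \to B$.

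First I would fix a finite-dimensional operator system $S \subseteq A$ and apply the hypothesis to get a cpc lift $\eta : S \to A$ of $\mathrm{id}_S$ along some quotient map; but actually the cleanest route is to apply local liftability of $\mathrm{id}_A$ to a specific quotient. Here is the key construction: consider the pullback (fibered product) $C^*$-algebra $D = \{(a,b) \in A \oplus B : \phi(a) \equiv \pi(b) \text{ hmm}\}$ — this doesn't quite work since $\phi$ isn't a $*$-homomorphism. Instead, the standard trick (as in \cite[Corollary 13.1.4]{BO}) is: we want a ucp lift of $\phi|_S$ through $\pi$. Represent $B/J$ faithfully and take a ucp lift in the sense of a cpc map into $B$; the obstruction is that we only know how to lift $\mathrm{id}_A$. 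The resolution is to compose lifts. Concretely: since $\phi : A \to B/J$ is ucp and $\pi : B \to B/J$ is a surjective $*$-homomorphism, I would invoke local liftability of $\mathrm{id}_A$ to obtain, for the given $S$, a ucp map $\psi_S : S \to A$ lifting $\mathrm{id}_S$ — but $\mathrm{id}_S$ already has the inclusion $S \hookrightarrow A$ as a lift, so that is vacuous. The actual point must be more subtle: we need local liftability of $\mathrm{id}_A$ with respect to the quotient $B \to B/J$ precomposed appropriately.

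So the honest approach: let $S \subseteq A$ be finite-dimensional. Apply the hypothesis that $\mathrm{id}_A$ is locally liftable to the particular quotient map $\pi : B \to B/J$ \emph{together with} the ucp map $\phi : A \to B/J$ — that is, view $\phi$ as giving $A$ the structure of an ``$A$-algebra over $B/J$'' and note that local liftability of $\mathrm{id}_A$ means: for every finite-dimensional operator system $S\subseteq A$ and every ucp map $\rho : A \to Q/I$ into a quotient, $\rho|_S$ lifts. Take $\rho = \phi$, $Q = B$, $I = J$: then by hypothesis applied to $\phi$ itself, $\phi|_S$ lifts to a ucp map $S \to B$. Wait — that \emph{is} the statement to be proved, so the hypothesis ``$\mathrm{id}_A$ is locally liftable'' must be parsed as: for every quotient and every such $S$, there is a ucp lift of the composition $S \hookrightarrow A \xrightarrow{q} $ (quotient), which is exactly local liftability of $A$. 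I think the cleanest correct proof uses the \textbf{pullback}: given $\phi : A \to B/J$ ucp, form $E = \{(a,b): a \in A,\, b \in B,\, \phi(a) = \pi(b)\}$... this fails associativity of cp. The genuinely standard argument (Effros–Haagerup / \cite[Ch.~13]{BO}) is: a ucp map $\phi$ from $A$ into a quotient is locally liftable iff the amalgamated/extended map $\mathrm{id} : A \to A$ composed through the relevant mapping cone lifts, and one reduces the general $\phi$ to the case $\phi = \mathrm{id}$ via the mapping-cone $C^*$-algebra $\mathcal{C}_\phi$ fitting in an extension $0 \to SJ \to \mathcal{C} \to A \to 0$ whose quotient map is (essentially) $\mathrm{id}_A$, so that lifting $\mathrm{id}_A$ locally yields local lifts of $\phi$ by composition.

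The main obstacle, and the heart of the argument, is this reduction of an \emph{arbitrary} ucp map $\phi$ to the identity map: one must build the right intermediary $C^*$-algebra (a pullback / mapping-cone type construction) such that a local ucp lift of $\mathrm{id}_A$ through \emph{one} canonical quotient manufactures a local ucp lift of $\phi$ through \emph{every} quotient. Once that algebraic gadget is in place, the rest is a diagram chase together with the footnoted fact (from \cite[Theorem 6]{Arv77} and the argument of \cite[Lemma 13.1.2]{BO}) that cpc local lifts can be upgraded to ucp local lifts. The finite-dimensionality of $S$ is used precisely to invoke that closedness/approximation result and to ensure the local lifts patch correctly.
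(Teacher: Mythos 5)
There is a genuine gap: you correctly identify that the whole content of the proposition is the reduction of an arbitrary ucp map $\phi\colon A\to B/J$ to the identity map, but you never supply the construction that accomplishes it, and the constructions you gesture at (a pullback of $\phi$ and $\pi$, a mapping cone) are not the ones that work here -- as you yourself note, the pullback fails because $\phi$ is not a $*$-homomorphism, and the mapping-cone route is left entirely unexecuted. The missing idea is the free group trick, which uses the lemma stated immediately before the proposition (Kirchberg's result that $C^*(\F)$ has the LLP, and the LP when $\F$ is countably generated). Choose a free group $\F$ with a surjection $\pi\colon C^*(\F)\twoheadrightarrow A$; the hypothesis that $\mathrm{id}_A$ is locally liftable is to be read relative to this presentation of $A$ as a quotient of $C^*(\F)$, so for a finite-dimensional operator system $S\subseteq A$ it produces a ucp map $\rho\colon S\to C^*(\F)$ with $\pi\circ\rho=\mathrm{id}_S$. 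Now given any ucp $\phi\colon A\to B/J$, the composition $\phi\circ\pi\colon C^*(\F)\to B/J$ is ucp and lifts to a ucp $\psi\colon C^*(\F)\to B$ by the lifting property of $C^*(\F)$; then $\psi\circ\rho$ lifts $\phi|_S$. This single composition is the entire proof, and no pullback or cone algebra is needed.

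Relatedly, your forward direction rests on a misreading of the statement. Interpreting ``$\mathrm{id}_A$ is locally liftable'' as liftability along the trivial quotient $A\to A/\{0\}$ makes the condition vacuous for every $C^*$-algebra (the inclusion $S\hookrightarrow A$ is already a lift), which you notice but do not repair. The correct parsing is that $\mathrm{id}_A$ is locally liftable with respect to a presentation of $A$ as a quotient of some algebra -- in the paper, a quotient of $C^*(\F)$ -- and then the forward direction is the genuinely ``in particular'' instance of the LLP applied to the quotient map $C^*(\F)\to A$. Without fixing this parsing, neither direction of your argument gets off the ground.
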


To see why it suffices to lift the identity of $A$, choose a free group $\F$ such that we have a surjection $\pi:C^*(\F)\to A$. 
Let $B$ be a $C^*$-algebra with two-sided closed ideal $J$ and $\phi:A\to B/J$ a ucp map. Suppose $S\subseteq A$ is a finite dimensional operator system, and let $\rho:S\to C^*(\F)$ be the lift of $id_A|_S$. Then $\phi\circ \pi:C^*(\F)\to B/J$ is a ucp map, which lifts to a ucp map $\psi:C^*(\F)\to B$. Our desired lift of $\phi|_S$ is $\psi\circ\rho$.

\[
\begin{tikzcd}
& & B\arrow[two heads]{dd}\\
& C^*(\F) \arrow[two heads]{d}{\pi} \arrow{rd}{\phi\circ\pi}\arrow{ru}{\psi} &\\
S\arrow{ur}{\rho} \arrow[r,phantom, "\rotatebox{360}{$\subseteq$}", description] & A \arrow{r}{\phi} & B/J\\ 
\end{tikzcd}
\]

Given the above proposition, the following is surely known to the experts. (The special case for $\A$ follows immediately from \cite[Proposition 2.2]{Oza01}.)

\begin{proposition}\label{projLLP'}
Any projective $C^*$-algebra has the LLP. In particular, $\A$ has the LLP. 
\end{proposition}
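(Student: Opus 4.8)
The plan is to exhibit an arbitrary projective $C^*$-algebra as a relatively weakly injective subalgebra of some full free-group $C^*$-algebra, and then to invoke two facts already in hand: that $C^*(\F)$ has the LLP for every free group $\F$ (Kirchberg's lemma above), and that the LLP passes to relatively weakly injective subalgebras (Proposition \ref{P:rwi}).

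First I would reduce to the unital case: a non-unital $C^*$-algebra has the (L)LP exactly when its unitization does, and by \cite[Proposition 2.5]{Bla85} the unitization of a projective $C^*$-algebra is again projective (as a unital $C^*$-algebra). So let $A$ be unital and projective, and, exactly as in the proof of Proposition \ref{essential}, fix a free group $\F$ together with a surjective unital *-homomorphism $\pi:C^*(\F)\to A$.

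The next step is the crux, and it is precisely the content of Proposition \ref{projrwi}: applying projectivity to lift the identity map $id_A$ along $\pi$ produces a unital *-homomorphism $\iota:A\to C^*(\F)$ with $\pi\circ\iota=id_A$. In particular $\iota$ is injective, so it realizes $A$ as a unital $C^*$-subalgebra of $C^*(\F)$, and the splitting $\pi$ itself furnishes the weak expectation (after composing with the universal representation of $A$, it restricts to the identity on $A$), so the inclusion is relatively weakly injective. Now Kirchberg's lemma gives that $C^*(\F)$ has the LLP, and Proposition \ref{P:rwi} transports this to $A$. Finally, $\A$ is projective by Proposition \ref{proj}, hence has the LLP.

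I do not expect a genuine obstacle here — the statement is a short concatenation of results established above. The only point that warrants care is the one encapsulated in Proposition \ref{projrwi}: projectivity upgrades ``$A$ is a quotient of $C^*(\F)$'' to ``$A$ is a retract of $C^*(\F)$,'' and it is this retraction (not merely the surjection) that yields relative weak injectivity and therefore lets the LLP descend.
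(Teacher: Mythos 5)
Your argument is correct, and every ingredient you invoke --- Proposition \ref{projrwi}, Kirchberg's lemma that $C^*(\F)$ has the LLP, and Proposition \ref{P:rwi} --- is already established in the paper, so the proof goes through with no gaps. It does, however, take a different route from the one the paper intends. The paper deduces the proposition from the immediately preceding characterization: a unital $C^*$-algebra has the LLP if and only if its identity map is locally liftable, and for a projective algebra the identity lifts \emph{globally} to a unital *-homomorphism through any quotient, so there is nothing left to check; the transfer from $C^*(\F)$ happens inside the proof of that characterization, by explicitly composing lifts ($\psi\circ\rho$ in the displayed diagram), and uses only the lifting property of $C^*(\F)$. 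You instead package the same retraction $A\to C^*(\F)\to A$ as a relatively weakly injective embedding and then quote Kirchberg's permanence result that the LLP descends to such subalgebras --- a heavier tool, whose proof runs through tensor-product characterizations of the LLP rather than through lifting diagrams. What your approach buys is uniformity: it is exactly the mechanism the paper uses for the WEP in Theorems \ref{rwithm} and \ref{QWEPlem}, so the LLP and the WEP are seen to descend to $A$ for the same structural reason. What the paper's approach buys is economy and a slightly stronger conclusion lurking in the background: since projectivity gives a global *-homomorphism lift of the identity, the same diagram shows that a separable projective unital $C^*$-algebra actually inherits the (global) lifting property from $C^*(\F_\infty)$, not merely the local one.
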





Next we give Kirchberg's deep and elegant characterization of Connes' Embedding Problem. 
Below is an augmented and abridged version of \cite[Proposition 8.1]{Kir93} (omitting some of the equivalent conditions and adding one that quickly follows). 
\begin{theorem}[Kirchberg]\label{Kir}
The following conjectures are equivalent. 
\begin{enumerate}
    \item Every finite von Neumann algebra with separable predual is embeddable into an ultrapower of the hyperfinite $II_1$-factor. 
    \item $C^*(\F)\otimes_{max}C^*(\F)\simeq C^*(\F)\otimes_{min}C^*(\F)$ canonically (where $\F$ is any nonabelian free group).
    \item $C^*(\F)\otimes_{max}C^*(\F)$ is RFD.
    \item The LLP implies the WEP.
    \item $C^*(\F)$ has the WEP. 
\end{enumerate}
\end{theorem}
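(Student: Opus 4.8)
The bulk of this theorem is Kirchberg's original result \cite[Proposition 8.1]{Kir93}, so the plan is to cite that for the equivalence of conditions (1), (2), (4), and (5), and to spend our effort only on inserting condition (3) into the chain. Concretely, I would prove the two implications $(3) \Rightarrow (2)$ and $(2) \Rightarrow (3)$, which is enough since (2) is already known to be equivalent to the rest.

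For $(3) \Rightarrow (2)$: suppose $C^*(\F) \otimes_{max} C^*(\F)$ is RFD. An RFD $C^*$-algebra is in particular residually finite dimensional, hence the canonical surjection $C^*(\F)\otimes_{max}C^*(\F) \to C^*(\F)\otimes_{min}C^*(\F)$ is injective on each finite-dimensional quotient's separating data; more precisely, every finite-dimensional representation of the maximal tensor product factors through the minimal tensor product, because a finite-dimensional representation of $C^*(\F)\otimes_{max}C^*(\F)$ restricts on each leg to a finite-dimensional representation of $C^*(\F)$, and on finite-dimensional images the maximal and minimal norms on the algebraic tensor product coincide (both equal the norm of the pair of commuting representations, which is an honest Hilbert-space representation). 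Since the finite-dimensional representations separate points of the RFD algebra $C^*(\F)\otimes_{max}C^*(\F)$, and each kills the kernel of the canonical map to the minimal tensor product, that kernel is zero, giving (2).

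For $(2) \Rightarrow (3)$: assuming the canonical map $C^*(\F)\otimes_{max}C^*(\F) \to C^*(\F)\otimes_{min}C^*(\F)$ is an isomorphism, it suffices to show $C^*(\F)\otimes_{min}C^*(\F)$ is RFD. This follows because $C^*(\F)$ is itself RFD (Choi \cite{Cho80}), and the minimal tensor product of two RFD $C^*$-algebras is RFD: if $\{\pi_i\}$ and $\{\rho_j\}$ are separating families of finite-dimensional representations of the two factors, then $\{\pi_i \otimes \rho_j\}$ is a separating family of finite-dimensional representations of the minimal tensor product, since the minimal norm is by definition the spatial norm and these product representations compute it. Chaining with (2) gives that $C^*(\F)\otimes_{max}C^*(\F)$ is RFD.

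I expect the genuinely hard content to be entirely inside the cited Kirchberg result \cite[Proposition 8.1]{Kir93} --- equivalences (1) $\Leftrightarrow$ (2) $\Leftrightarrow$ (4) $\Leftrightarrow$ (5) rest on the deep theory of the WEP, the LLP, and tensor products, together with the identification of CEP with the von Neumann algebra embedding statement. The additional step we supply, weaving in (3), is routine once one knows that finite-dimensional representations of a maximal tensor product automatically factor through the minimal tensor product and that RFD is stable under minimal tensor products; the only mild subtlety is making sure the separating family argument in $(3)\Rightarrow(2)$ is phrased so that it genuinely forces the kernel of the canonical surjection to vanish rather than merely being small.
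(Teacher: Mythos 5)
Your proposal is correct and follows essentially the same route as the paper: the authors also cite Kirchberg's \cite[Proposition 8.1]{Kir93} for the equivalence of (1), (2), (4), (5) and splice in (3) via exactly your two observations, which they package as Proposition \ref{factors} (for general RFD $A$ and $B$: $A\otimes_{max}B$ is RFD iff $A\otimes_{max}B = A\otimes_{min}B$), proved by the same factorization of finite-dimensional representations of the maximal tensor product through the minimal one and the stability of RFD under minimal tensor products.
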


Item (1) is traditionally referred to as Connes' Embedding Problem (CEP). 
Item (3) is not included in \cite{Kir93}; however, it is equivalent to (2) by virtue of the following proposition, which has often been alluded to in the literature. We briefly sketch an argument here; for a complete proof, see \cite[Proposition A.0.1]{Cou18}. 

\begin{proposition}\label{factors}
Given two RFD $C^*$-algebras $A$ and $B$, $A\otimes_{max} B$ is RFD if and only if $A\otimes_{max} B=A\otimes_{min} B$ canonically.
\end{proposition}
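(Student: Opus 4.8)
The plan is to study the canonical surjection $q\colon A\otimes_{max}B\to A\otimes_{min}B$ and to show that it is an isomorphism precisely when $A\otimes_{max}B$ is RFD.

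\emph{The ``if'' direction.} First observe that $A\otimes_{min}B$ is automatically RFD when $A$ and $B$ are. Indeed, if $\{\pi_i\colon A\to\M_{m_i}\}$ and $\{\rho_j\colon B\to\M_{n_j}\}$ are separating families of finite-dimensional representations, then functoriality of the minimal tensor product under $*$-homomorphisms yields finite-dimensional representations $\pi_i\otimes\rho_j\colon A\otimes_{min}B\to\M_{m_in_j}$, and these separate points because $(\bigoplus_i\pi_i)\otimes(\bigoplus_j\rho_j)$ is a faithful representation of $A\otimes_{min}B$ that decomposes as $\bigoplus_{i,j}(\pi_i\otimes\rho_j)$. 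Hence if $q$ is an isomorphism, $A\otimes_{max}B\cong A\otimes_{min}B$ is RFD.

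\emph{The ``only if'' direction.} Suppose $A\otimes_{max}B$ is RFD, with a separating family $\{\sigma_k\colon A\otimes_{max}B\to\M_{n_k}\}$ of finite-dimensional representations. It suffices to show that each $\sigma_k$ factors through $q$, for then $\ker q\subseteq\bigcap_k\ker\sigma_k=\{0\}$, so $q$ is injective, hence an isomorphism. Fix $k$; after compressing to the nondegenerate part of $\sigma_k$ (this is the mild technical point when $A$, $B$ are non-unital, where one passes through the multiplier algebra), $\sigma_k$ determines commuting finite-dimensional representations $\pi_k\colon A\to\M_{n_k}$ and $\rho_k\colon B\to\M_{n_k}$ with $\sigma_k(a\otimes b)=\pi_k(a)\rho_k(b)$. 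Set $N_k^A=\pi_k(A)$ and $N_k^B=\rho_k(B)$, finite-dimensional $C^*$-subalgebras of $\M_{n_k}$ with commuting ranges. Since $N_k^A$ is finite-dimensional, hence nuclear, the canonical map $N_k^A\otimes_{max}N_k^B\to N_k^A\otimes_{min}N_k^B$ is an isomorphism; and since $N_k^A$ and $N_k^B$ commute in $\M_{n_k}$, the universal property of $\otimes_{max}$ gives a $*$-homomorphism $N_k^A\otimes_{max}N_k^B\to\M_{n_k}$, $x\otimes y\mapsto xy$, which we regard via the previous identification as a map out of $N_k^A\otimes_{min}N_k^B$. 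Finally, the surjections $A\twoheadrightarrow N_k^A$ and $B\twoheadrightarrow N_k^B$ induce, again by functoriality of $\otimes_{min}$, a $*$-homomorphism $A\otimes_{min}B\to N_k^A\otimes_{min}N_k^B$; composing it with the multiplication map yields $\tilde\sigma_k\colon A\otimes_{min}B\to\M_{n_k}$ satisfying $\tilde\sigma_k\circ q=\sigma_k$ on elementary tensors, hence on all of $A\otimes_{max}B$. This completes the argument.

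\emph{Main obstacle.} The crux is the factorization of $\sigma_k$ through $A\otimes_{min}B$, and it is exactly here that care is needed: it relies on the nuclearity of finite-dimensional $C^*$-algebras (to collapse the two tensor norms on $N_k^A\otimes N_k^B$) together with the functoriality of the minimal tensor product under arbitrary $*$-homomorphisms, not merely injective ones. By comparison, the nondegeneracy reduction in the non-unital case and the check that the tensored family in the ``if'' direction truly separates points are routine.
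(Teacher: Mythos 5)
Your proof is correct and follows essentially the same route as the paper's: the ``if'' direction tensors separating families of finite-dimensional representations, and the ``only if'' direction factors each finite-dimensional representation of $A\otimes_{max}B$ through $A\otimes_{min}B$ by passing to the commuting finite-dimensional (hence nuclear) images and composing the induced map on $\otimes_{min}$ with the multiplication map. The only difference is that you flag the nondegeneracy/multiplier-algebra point in the non-unital case explicitly, which the paper's sketch leaves implicit.
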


By taking tensor products of finite dimensional representations, one can readily verify that the minimal tensor product of two RFD $C^*$-algebras is again RFD. 
For the other implication, it will suffice to show that any finite dimensional representation of the maximal tensor product factors through the minimal tensor product. 

 To that end, let $A$ and $B$ be $C^*$-algebras and $\pi:A\otimes_{max} B\to \mathbb{M}_n$ be a finite dimensional representation with restrictions $\pi_A:A\to \mathbb{M}_n$ and $\pi_n:B\to \mathbb{M}_n$, i.e., $\pi_A$ and $\pi_B$ have commuting ranges and $\pi|_{A\odot B}=\pi_A\times \pi_B$. Then we have the induced *-homomorphism $\pi_A\otimes\pi_B:A\otimes_{min}B\to \pi_A(A)\otimes_{min}\pi_B(B)\simeq \pi_A(A)\otimes_{max}\pi_B(B)$. On the other hand, since $\pi_A(A)$ and $\pi_B(B)$ commute, the natural embeddings of $\pi_A(A)$ and $\pi_B(B)$ into $\mathbb{M}_n$ induce a representation of $\pi_A(A)\otimes_{max}\pi_B(B)$ such that $\pi_A(a)\otimes\pi_B(b)\mapsto \pi_A(a)\pi_B(b)=\pi(a\otimes b)$ for each $a\in A$ and $b\in B$. The composition of this representation with $\pi_A\otimes\pi_B$ gives a representation of $A\otimes_{min} B$ that agrees with $\pi$ on $A\odot B$. 



\subsection{$\A$ versus $C^*(\F_2)$} \label{A and F2}
Let us first show that $\A$ is ``smaller" than $C^*(\F_2)$.

\begin{lemma}\label{L:neq}
We have that $\A$ is a quotient of $C^*(\F_2)$, but not vice versa.
\end{lemma}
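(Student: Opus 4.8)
The forward direction is essentially immediate: $\A$ is singly generated as a unital $C^*$-algebra, so the assignment of one of the free generators of $\F_2$ to a suitable contraction will do. Concretely, fix a universal contraction $T$, so $C^*(T,I) \simeq \A$. Write $T = A + iB$ with $A = \operatorname{Re} T$, $B = \operatorname{Im} T$ self-adjoint contractions. By the Cayley-type trick (or just functional calculus on $A$ and $B$ separately) each self-adjoint contraction is a linear combination of two unitaries: $A = \frac12(U_1 + U_1^*)$ with $U_1 = A + i\sqrt{1-A^2}$ unitary, and similarly $B = \frac12(U_2 + U_2^*)$. Hence $T = \frac12(U_1+U_1^*) + \frac{i}{2}(U_2 + U_2^*) \in C^*(U_1,U_2,I)$, so $\A = C^*(T,I) \subseteq C^*(U_1,U_2,I)$. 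By the universal property of $C^*(\F_2)$, the assignment of the free generators to $U_1, U_2$ gives a surjective unital $*$-homomorphism $C^*(\F_2) \to C^*(U_1,U_2,I)$; composing with a conditional observation isn't needed — we want to land \emph{onto} $\A$, so instead run the argument with $T$ chosen so that $C^*(U_1,U_2,I)$ is exactly $C^*(T,I)$. That refinement is clean: take $T = A + iB$ where $A,B$ are a \emph{free pair} of self-adjoint contractions realizing the universal $C^*$-algebra $C[-1,1] *_\C C[-1,1]$; then $C^*(U_1,I) = C^*(A,I) = C[-1,1]$ and likewise for $U_2$, and $C^*(U_1,U_2,I)$ is a quotient of $C^*(\F_2)$ that is generated by the single contraction $T$, hence is a quotient of $\A$ by universality of $\A$. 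But it also receives a map \emph{from} $\A$ since $T$ is a contraction; the composite $\A \to C^*(U_1,U_2,I) \to \A$ (the latter by Lemma \ref{L:neq}-type consideration) is... — cleaner yet: just assert that $\A$, being the universal unital $C^*$-algebra of a contraction, is a quotient of \emph{any} unital $C^*$-algebra containing a contraction that generates a copy of $\A$, and $C^*(U_1, U_2, I) = C^*(T, I) \simeq \A$ when $T$ is taken to be an honest universal contraction with real and imaginary parts as above. So the forward direction reduces to: every universal contraction $T$ satisfies $C^*(T,I) = C^*(\operatorname{Re} T + i\sqrt{1-(\operatorname{Re}T)^2}, \operatorname{Im} T + i\sqrt{1-(\operatorname{Im}T)^2}, I)$, with the two displayed unitaries coming from $C^*(\F_2)$ via its universal property, giving a surjection $C^*(\F_2) \twoheadrightarrow C^*(T,I) \simeq \A$.

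\textbf{The reverse direction (the real content).} We must show $C^*(\F_2)$ is \emph{not} a quotient of $\A$. The excerpt has already flagged the mechanism in the introduction: "$C^*(\F_2)$ is not singly generated as a unital $C^*$-algebra, so it cannot be a quotient of $\A$." So the plan is to prove that $C^*(\F_2)$ is not singly generated as a unital $C^*$-algebra; since any quotient of $\A$ is generated as a unital $C^*$-algebra by the image of the single generator $x$ (a contraction), a quotient of $\A$ is necessarily singly generated, and we are done. To see $C^*(\F_2)$ is not singly generated, I would use the $K$-theory obstruction: $\A$ has trivial $K$-theory (Remark \ref{nounit} / projectivity), hence so does every quotient of $\A$ — wait, that is false for quotients in general — so instead I should argue directly about $C^*(\F_2)$. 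The standard fact is that $C^*(\F_2)$ has nontrivial $K_1$: indeed $K_1(C^*(\F_2)) = \Z^2$, generated by the classes of the two unitary generators. If $C^*(\F_2) = C^*(V, I)$ for a single element $V$ with, say, $\|V\| \le 1$, then $C^*(\F_2)$ would be a quotient of $\A$; but $\A$ is projective, hence contractible (Remark \ref{nounit}), so all of its quotients... no — contractibility is not inherited by quotients. The correct route: a quotient of a projective $C^*$-algebra need not be projective, but \emph{every} quotient of $\A$ is singly generated as a unital $C^*$-algebra (generated by the image of $x$), and $C^*(\F_2)$ is not. The cleanest proof that $C^*(\F_2)$ is not singly generated: if it were generated as a unital $C^*$-algebra by one element $V$, then it would be a quotient of the universal unital $C^*$-algebra generated by one element subject to the norm bound $\|V\| \le \|V\|_{C^*(\F_2)}$, which (after rescaling) is $\A$; then $C^*(\F_2)$ would be contractible as it is a quotient of ... — again quotients of contractible algebras need not be contractible. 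I will therefore invoke the genuine obstruction: $\A$ has the property that all its quotients have vanishing $K_1$? That's also not automatic. Let me settle on the argument that actually works.

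\textbf{Correct argument for the reverse direction.} Every quotient of $\A$ is generated as a unital $C^*$-algebra by a \emph{single} element. So it suffices to show $C^*(\F_2)$ is not singly generated as a unital $C^*$-algebra. Suppose toward a contradiction that $C^*(\F_2) = C^*(V,I)$. Scaling, we may assume $\|V\| \le 1$, so there is a surjective unital $*$-homomorphism $q\colon \A \to C^*(\F_2)$ sending $x \mapsto V$. By projectivity of $\A$ (Proposition \ref{proj}), the identity map $\mathrm{id}_{C^*(\F_2)} \colon C^*(\F_2) \to C^*(\F_2) = \A/\ker q$ lifts along $q$ to a unital $*$-homomorphism $s\colon C^*(\F_2) \to \A$ with $q \circ s = \mathrm{id}$. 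Thus $C^*(\F_2)$ is a (unital) retract of $\A$; in particular $K_1(C^*(\F_2))$ is a direct summand of $K_1(\A)$. But $K_1(\A) = 0$ (Remark \ref{nounit}, via contractibility of $\A$), while $K_1(C^*(\F_2)) \cong \Z^2 \ne 0$ — a contradiction. Hence $C^*(\F_2)$ is not singly generated, so it is not a quotient of $\A$. The main obstacle, and the only step requiring care, is pinning down which invariant survives: one needs a homotopy- or retract-invariant functor that is trivial on $\A$ (guaranteed by projectivity/contractibility) but nontrivial on $C^*(\F_2)$; $K_1$ works, and the computation $K_1(C^*(\F_2)) \ne 0$ is classical (Pimsner–Voiculescu, or directly the nonzero Fredholm indices available in $C^*(\F_2)$, cf. Remark \ref{C(F2)notproj}). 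I would present the proof in exactly this two-part form, keeping the forward direction to two lines and devoting the bulk to the retract-plus-$K_1$ argument.
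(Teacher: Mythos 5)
Your forward direction is correct and, once the false starts are stripped away, is essentially the paper's argument: you recover $x_1=\operatorname{Re}x$ and $x_2=\operatorname{Im}x$ from the unitaries $U_j=x_j+i\sqrt{1-x_j^2}$ by functional calculus, so $\A=C^*(x,1)=C^*(x_1,x_2,1)=C^*(U_1,U_2,1)$ is a quotient of $C^*(\F_2)$ by universality. (The paper uses $e^{ix_j}$ and a branch of the logarithm rather than the Cayley-type unitaries; either works.) You also correctly reduce the reverse direction to showing that $C^*(\F_2)$ is not singly generated as a unital $C^*$-algebra.

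However, your proof of that last fact has a genuine gap at the retract step. Lifting $\mathrm{id}_{C^*(\F_2)}\colon C^*(\F_2)\to \A/\ker q$ along the quotient map $q$ requires projectivity of the \emph{domain} of the map being lifted, namely $C^*(\F_2)$ --- not of $\A$ (compare Proposition \ref{projrwi}, where it is the algebra being surjected \emph{onto} whose projectivity is used). But $C^*(\F_2)$ is not projective (Remark \ref{C(F2)notproj}), so the section $s$ is unjustified. Worse, it is unobtainable by any means: under your reductio hypothesis, the existence of $s$ would force $K_1(C^*(\F_2))\cong\Z^2$ to be a summand of $K_1(\A)=0$, so the contradiction you derive refutes the existence of $s$ rather than the hypothesis that $C^*(\F_2)$ is singly generated. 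The repair is elementary and avoids $K$-theory entirely: a quotient of a singly generated unital $C^*$-algebra is singly generated; $C^*(\F_2)$ surjects onto $C(\T^2)$ (which is generated by two commuting unitaries); and $C(\T^2)$ is not singly generated, since a singly generated commutative unital $C^*$-algebra is $C(X)$ for $X$ a compact subset of the plane, and $\T^2$ is homeomorphic to no such set. This is exactly the route the paper takes.
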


\begin{proof}
Let $x$ be the canonical generator of $\A$, with $x_1 + i x_2$ its decomposition into real and imaginary parts.  Then $\A = C^*(x,1) = C^*(x_1, x_2, 1) = C^*(e^{ix_1}, e^{ix_2})$, since the spectrum of $e^{ix_j}$ is contained in a proper arc of the circle, and a continuous branch of $-i \log (\cdot)$ takes $e^{ix_j}$ back to $x_j$.  Any $C^*$-algebra generated by two unitaries is a quotient of $C^*(\F_2)$.

To establish the other statement, it suffices to recall that $C^*(\F_2)$ is not singly generated (as a unital $C^*$-algebra).  For $C^*(\F_2)$ surjects onto $C(\T^2)$, since the latter is generated by two unitaries, and $C(\T^2)$ is not singly generated because $\T^2$ is not homeomorphic to a compact subset of the plane.
\end{proof}

Although $C^*(\F_2)$ and $\A$ are not isomorphic, they play the same universal role for CEP. This has been observed before, specifically by Pisier in \cite[Proposition 16.13]{Pis03}, where he shows that CEP has an affirmative answer if and only if $\A$ has the WEP (compare with Theorem \ref{Kir}(5)). It turns out that this can be linked to a strong structural relationship between the two algebras. 

\begin{theorem}\label{rwithm}
$\A$ and $C^*(\F_2)$ embed relatively weakly injectively into one another. 
\end{theorem}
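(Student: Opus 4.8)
The plan is to establish the two relatively weakly injective embeddings separately, since the two directions rest on quite different facts.

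\medskip

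\textbf{Direction 1: $\A$ embeds relatively weakly injectively into $C^*(\F_2)$.} This is the easy half and follows immediately from the projectivity of $\A$. By Lemma \ref{L:neq}, $C^*(\F_2)$ surjects onto $\A$. Since $\A$ is projective (Proposition \ref{proj}), Proposition \ref{projrwi} applies verbatim: the identity $\mathrm{id}_\A$ lifts through the surjection $C^*(\F_2)\to\A$ to an embedding $\A\hookrightarrow C^*(\F_2)$, and the surjection itself is a (necessarily ucp) weak expectation onto $\A$. So there is nothing new to do here beyond citing Propositions \ref{proj} and \ref{projrwi}.

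\medskip

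\textbf{Direction 2: $C^*(\F_2)$ embeds relatively weakly injectively into $\A$.} This is the substantive direction. First I would exhibit an embedding of $C^*(\F_2)$ into $\A$ at all. The natural move is to build a universal contraction whose generated $C^*$-algebra \emph{visibly contains} two free-enough unitaries. Concretely: pick a universal contraction $T$ of the form $N \oplus U$, where $N$ is a separably-acting universal contraction (say the Hadwin-type direct sum of dense matrices from Proposition \ref{P:hadwin}) and $U$ is a pair-generated model $U_1 \oplus U_2$ realizing the full left regular representation of $\F_2$ on its own summand — all assembled so that the whole operator is still a contraction and still universal (adding summands to a universal contraction keeps it universal, by Proposition \ref{P:norm}, since the supremum of $\|q(S)\|$ only gets bigger, hence stays maximal). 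Then $C^*(T,I) = \A$ contains, via compression to the $U$-summand, a copy of $C^*(\F_2)$ (or at least of $C^*_{\mathrm{red}}(\F_2)$; one must be careful to use the full group $C^*$-algebra — better to build $U$ from a genuine universal pair of unitaries so the quotient $\A \to C^*(U) = C^*(\F_2)$ exists by universality of $\A$). Having the embedding $\iota: C^*(\F_2)\hookrightarrow \A$, relative weak injectivity then asks for a ucp weak expectation $\A \to \iota(C^*(\F_2))''$ restricting to the identity on $\iota(C^*(\F_2))$.

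The cleanest route to the weak expectation is to invoke Kirchberg's result (Proposition \ref{P:rwi}, really \cite[Corollaries 2.6, 3.3]{Kir93}) together with the LLP of $\A$: since $\A$ has the LLP (Proposition \ref{projLLP'}), and $C^*(\F_2)$ has the LLP (Kirchberg's \cite[Lemma 3.3]{Kir94}), one can try to realize the surjection $\A \to C^*(\F_2)$ — coming from $T \mapsto U$, i.e., killing the $N$ summand — as having a ucp splitting into $\A$, which is exactly a weak expectation. The key observation is that the map $C^*(\F_2) \to \A$ we want is a splitting of the surjection $\A \twoheadrightarrow C^*(\F_2)$; so the problem reduces to lifting $\mathrm{id}_{C^*(\F_2)}$ through that surjection by a ucp (not *-homomorphic) map, and this is precisely what the LLP of $C^*(\F_2)$ delivers when combined with the LLP of $\A$ — or, more directly, one uses that $C^*(\F_2)$ has the LP (it is countably generated) so that $\mathrm{id}_{C^*(\F_2)}$ genuinely lifts, not just locally. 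That global ucp lift $C^*(\F_2)\to\A$, post-composed with the quotient back onto $C^*(\F_2)$, is the identity, which by Choi–Effros / Arveson arguments forces the lift to be a (completely isometric) embedding and makes the quotient map a weak expectation onto its image's weak closure.

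\medskip

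\textbf{Main obstacle.} The delicate point is producing the \emph{weak expectation} in Direction 2 — a ucp map $\A \to C^*(\F_2)'' $ that is the identity on $C^*(\F_2)$ — rather than just an embedding. One must be careful that the copy of $\F_2$ inside $\A$ is realized via the \emph{full} group $C^*$-algebra (so that it is a genuine quotient of $\A$ and the machinery of LP/LLP applies), and that the ucp lift of $\mathrm{id}_{C^*(\F_2)}$ through the surjection $\A \twoheadrightarrow C^*(\F_2)$ actually exists globally; here the fact that $C^*(\F_2)$ has the (global) LP, not merely the LLP, is what I expect to do the work. I would phrase the argument to make the surjection $\A \to C^*(\F_2)$ and the embedding $C^*(\F_2)\to \A$ mutually splitting, reducing everything to: (i) $C^*(\F_2)$ has the LP, and (ii) both algebras are nonzero separable unital — the rest is the standard "ucp lift of the identity is a weak expectation onto the sub-$C^*$-algebra" lemma.
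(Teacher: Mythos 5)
Your Direction 1 is exactly the paper's argument: cite the surjection $C^*(\F_2)\to\A$ from Lemma \ref{L:neq}, the projectivity of $\A$ (Proposition \ref{proj}), and Proposition \ref{projrwi}. Direction 2, however, does not work as written, for two related reasons. First, the proposed embedding is not an embedding: if $T=N\oplus U$ with $U=U_1\oplus U_2$, then the compression of $C^*(T,I)$ to the $U$-summand is $C^*(U_1\oplus U_2,I)$, the unital $C^*$-algebra generated by the \emph{single} unitary $U_1\oplus U_2$; it is commutative and certainly contains no copy of $C^*(\F_2)$. No direct-summand device of this kind can manufacture two free unitaries out of one generator restricted to one reducing subspace. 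Second, and fatally, your weak expectation is obtained by ucp-splitting a surjection $\A\twoheadrightarrow C^*(\F_2)$ --- but no such surjection exists: Lemma \ref{L:neq} proves precisely that $C^*(\F_2)$ is \emph{not} a quotient of $\A$, because it is not singly generated as a unital $C^*$-algebra. So there is nothing for the LP/LLP machinery to lift through, and the entire second half of your argument rests on a false premise.

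The paper's proof exploits the slack built into Proposition \ref{rwi}(1): the weak expectation need only land in $\pi_u(C^*(\F_2))''$, not in $C^*(\F_2)$ itself, and that von Neumann algebra \emph{is} singly generated. Concretely, write the universal-representation images of the free generators as $U_j=e^{iA_j}$ with $A_j=A_j^*$ lying in $C^*(U_1,U_2)''$ (Borel functional calculus supplies the logarithms there even though they need not lie in $C^*(U_1,U_2)$), set $\alpha=\|A_1+iA_2\|$, and observe that $C^*(A_1,A_2)=C^*(\frac{A_1}{\alpha}+i\frac{A_2}{\alpha})$ is generated by a single contraction. Universality of $\A$ then gives a $*$-homomorphism $\psi:\A\to C^*(U_1,U_2)''$ with $\psi(x_j)=A_j/\alpha$. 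The map in the other direction is $\phi(U_j)=e^{i\alpha x_j}$, two unitaries inside $\A$ that are both functions of the single generator $x=x_1+ix_2$; the computation $\psi\circ\phi(U_j)=e^{iA_j}=U_j$ simultaneously shows that $\phi$ is injective and that $\psi$ is the required (here even multiplicative) weak expectation. No lifting property is invoked in this direction; everything follows from applying the universal property of $\A$ to the contraction $\frac{A_1}{\alpha}+i\frac{A_2}{\alpha}$.
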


\begin{proof}
We will use Proposition \ref{rwi}(1); in this case it will turn out that the required ucp maps can be taken to be *-homomorphisms.

From Lemma \ref{L:neq}, there exists a surjection $\pi: C^*(\F_2)\to \A$. So, $\A$ embeds relatively weakly injectively into $C^*(\F_2)$ by Proposition \ref{projrwi}. 

For the other relatively weakly injective embedding, let $\pi_u:C^*(\F_2)\to B(\mathcal{H}_u)$ be the universal representation of $C^*(\F_2)$, and let $U_1,U_2\in B(\mathcal{H}_u)$ be the images of the generators under $\pi_u$. 
Let $A_1,A_2\in B(\mathcal{H}_u)$ be self-adjoint elements such that $U_j=e^{i A_j}$ and 
\begin{center}
   $ C^*(U_1,U_2)\subseteq C^*(A_1, A_2)=C^*(\frac{A_1}{\alpha}+i\frac{A_2}{\alpha})\subseteq C^*(U_1,U_2)'',$
\end{center}
where $\alpha=\|A_1+iA_2\|$. Let $x = x_1 + ix_2$ be the canonical generator of $\A$ as before.  Then $e^{i\alpha x_j}$ are unitaries in $\A$ and 
\begin{align*}
    C^*(e^{i\alpha x_1}, e^{i\alpha x_2})\subseteq C^*(x_1, x_2)=\A.
\end{align*}
By universality, there exist surjective unital *-homomorphisms, 
\begin{center}
    $\phi:C^*(U_1,U_2)\to C^*(e^{i\alpha x_1}, e^{i\alpha x_2})\ \text{and}\ \psi:\A \to C^*(\frac{A_1}{\alpha}+i\frac{A_2}{\alpha})$
\end{center}
with $\phi(U_j)=e^{i\alpha x_j}$ and $\psi(x_j)=\frac{A_j}{\alpha}$. 
Then 
\begin{align*}
    \psi\circ \phi(U_j)=\psi(e^{i\alpha x_j})=e^{i\alpha (\frac{1}{\alpha}A_j)}=e^{iA_j}=U_j.
\end{align*}
It follows that $\psi\circ\phi=id_{C^*(U_1,U_2)}$, and this completes the proof.
\end{proof}





We now leverage relative weak injectivity to get the following characterizations of CEP.  Items (1) and (3) were deduced differently by Pisier \cite[Proposition 16.13]{Pis03} and Blecher-Duncan \cite[Section 6.1]{BD}, respectively.


\begin{theorem}\label{QWEPlem}
The following are equivalent to CEP. 
\begin{enumerate}
\item $\A$ has the WEP.
\item Projectivity implies the WEP.
\item $\A\otimes_{max}\A\simeq \A\otimes_{min}\A$ canonically.
\item $\A\otimes_{max} \A$ is RFD.
\item $\A\otimes_{max}\A$ has a separating family of finite-dimensional representations that map the two generators $x\otimes 1$ and $1\otimes x$ to nilpotent matrices. 
\end{enumerate}
\end{theorem}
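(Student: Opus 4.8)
The plan is to prove the chain of equivalences (1) $\iff$ (2) $\iff$ (3) $\iff$ (4) $\iff$ (5), borrowing heavily from the machinery already assembled: Theorem \ref{rwithm} (relative weak injectivity between $\A$ and $C^*(\F_2)$), Theorem \ref{Kir} (Kirchberg's characterizations), Proposition \ref{P:rwi} (WEP passes to relatively weakly injective subalgebras), Proposition \ref{factors} (for RFD algebras, maximal tensor product is RFD iff it agrees with the minimal one), and Theorem \ref{nilprep} together with the projectivity/RFD-ness of the nilpotent universal $C^*$-algebras $\A_n$.

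First I would establish (1) $\iff$ CEP. If CEP holds, then by Theorem \ref{Kir}(5) $C^*(\F_2)$ has the WEP; since $\A$ embeds relatively weakly injectively into $C^*(\F_2)$ by Theorem \ref{rwithm}, Proposition \ref{P:rwi} gives that $\A$ has the WEP. Conversely, if $\A$ has the WEP, then since $C^*(\F_2)$ embeds relatively weakly injectively into $\A$ (again Theorem \ref{rwithm}), Proposition \ref{P:rwi} gives $C^*(\F_2)$ the WEP, so CEP holds by Theorem \ref{Kir}(5). Then (1) $\iff$ (2) is almost immediate: (2) $\Rightarrow$ (1) because $\A$ is projective (Proposition \ref{proj}), and (1) $\Rightarrow$ (2) because any projective $C^*$-algebra has the LLP (Proposition \ref{projLLP'}), so assuming $\A$ has the WEP means (via CEP and Theorem \ref{Kir}(4)) that LLP implies WEP, and in particular every projective — hence LLP — $C^*$-algebra has the WEP.

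Next, (1) $\iff$ (3): one route is to invoke the general Kirchberg-type fact that a separable unital $C^*$-algebra $A$ with the LLP has the WEP if and only if $A \otimes_{max} A \simeq A \otimes_{min} A$ canonically (this is the $C^*(\F)$ argument of Theorem \ref{Kir}(2), which works verbatim for any LLP algebra because $A \otimes_{max} B = A \otimes_{min} B$ whenever $A$ has LLP and $B$ has WEP, applied with $A = B$ after using that WEP self-tensors correctly); since $\A$ has the LLP by Proposition \ref{projLLP'}, this gives (1) $\iff$ (3) directly. Alternatively one can route through $C^*(\F_2)$: (3) for $\A$ is equivalent to (2) of Theorem \ref{Kir} via the relative weak injectivity of the two algebras and the fact that relatively weakly injective inclusions are preserved by both tensor norms. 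Then (3) $\iff$ (4) is exactly Proposition \ref{factors} applied to $A = B = \A$, using that $\A$ is RFD (Proposition \ref{P:hadwin}(3)).

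Finally, (4) $\iff$ (5). The direction (5) $\Rightarrow$ (4) is trivial since a separating family of finite-dimensional representations is precisely what RFD means. For (4) $\Rightarrow$ (5), I would upgrade an arbitrary separating family of finite-dimensional representations of $\A \otimes_{max} \A$ to one sending the generators to nilpotents, mimicking the asymptotic-factorization idea from Section \ref{nilp}: each finite-dimensional representation $\rho$ of $\A \otimes_{max}\A$ restricts to commuting finite-dimensional representations $\rho_1, \rho_2$ of the two copies of $\A$; compose with $\phi_n \otimes_{max} \phi_m : \A \otimes_{max}\A \to \A_n \otimes_{max}\A_m$ (the nilpotent-quotient maps of Theorem \ref{approx}), note that by Theorem \ref{approx} the family $\{\phi_n \otimes_{max} \phi_m\}$ still separates $\A \otimes_{max}\A$ when (4) holds — here one needs that $\A_n \otimes_{max}\A_m$ is RFD, which follows because each $\A_n$ is RFD (Proposition \ref{projRFD}, since $\A_n$ is projective by \cite[Theorem 5]{Shu08}) and, crucially, because (4) lets us replace $\otimes_{max}$ by $\otimes_{min}$ so that Proposition \ref{factors} or the standard fact "$\otimes_{min}$ of RFD is RFD" applies — and finally compose with finite-dimensional separating representations of $\A_n \otimes_{min}\A_m$, which automatically send $x_n \otimes 1$ and $1 \otimes x_m$ to nilpotent matrices.

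I expect the main obstacle to be the implication (4) $\Rightarrow$ (5): one must be careful that the asymptotic factorization through $\A_n \otimes ? \, \A_m$ genuinely separates points, and the only place this can work is after passing to the minimal tensor product, which is exactly why the hypothesis (4) (equivalently (3)) is doing real work there — without it, $\phi_n \otimes_{max}\phi_m$ need not separate, and the nilpotent representations of $\A_n \otimes_{min}\A_m$ need not detect the maximal norm. Making the bookkeeping of which tensor norm is in play at each stage precise, and confirming that the representations of the minimal tensor product indeed factor through the commuting pair and hence send each $1 \otimes x_m$ to a nilpotent, is the delicate part; everything else is an assembly of the cited results.
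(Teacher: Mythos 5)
Your proposal reaches the right conclusions and, for the most part, assembles the same ingredients the paper uses: the mutual relatively weakly injective embeddings of $\A$ and $C^*(\F_2)$ (Theorem \ref{rwithm}), Kirchberg's Theorem \ref{Kir}, and Propositions \ref{P:rwi}, \ref{proj}, \ref{projLLP'}, and \ref{factors}. Your handling of (1), (2), and the link to CEP is exactly the paper's argument. Two points differ. First, your primary route to (1) $\iff$ (3) -- the claim that for \emph{any} separable unital $C^*$-algebra $A$ with the LLP one has ``$A$ has the WEP iff $A\otimes_{max}A\simeq A\otimes_{min}A$,'' asserted to work ``verbatim'' -- is not justified as stated: the fact you cite (LLP tensored with WEP gives $\MAX=\MIN$) only yields the forward implication, while the converse is the hard direction of Kirchberg's theorem and its proof uses specific structure of $C^*(\F)$; the correct general tensorial characterization of the WEP is against $C^*(\F_\infty)$, not against $A$ itself. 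Fortunately your stated fallback -- transporting the tensor-norm identity between $\A^{\otimes 2}$ and $C^*(\F_2)^{\otimes 2}$ through the relatively weakly injective embeddings in both directions -- is sound and is essentially what the paper does, except that the paper phrases the transfer at the level of (4) (residual finite dimensionality passes to subalgebras, so $\A\otimes_{max}\A$ is RFD iff $C^*(\F_2)\otimes_{max}C^*(\F_2)$ is) and then obtains (3) from Proposition \ref{factors}. Second, for (4) $\Rightarrow$ (5) your detour through the algebras $\A_n\otimes\A_m$ is workable once you pass to the minimal tensor product, but it is unnecessary: after (3) lets you replace $\otimes_{max}$ by $\otimes_{min}$, the paper simply tensors together two separating families of finite-dimensional representations of $\A$ that already send the generator to nilpotent matrices (Theorem \ref{nilprep}); the resulting representations $\sigma_n^{(1)}\otimes\sigma_m^{(2)}$ of $\A\otimes_{min}\A$ separate points and send $x\otimes 1$ and $1\otimes x$ to nilpotents, with none of the bookkeeping about whether $\phi_n\otimes\phi_m$ separates that you flag as the delicate step.
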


\begin{proof}
(2) $\Rightarrow$ (1) $\iff$ CEP $\Rightarrow$ (2): Condition (2) above implies (1) above by the fact that $\A$ is projective (Proposition \ref{proj}).  By Theorem \ref{rwithm} and Proposition \ref{P:rwi}, $\A$ has the WEP if and only if $C^*(\F_2)$ does, which is item (5) in Theorem \ref{Kir}.  Item (4) in Theorem \ref{Kir} implies (2) above because projective algebras have the LLP (Proposition \ref{projLLP'}).


(3) $\iff$ (4) $\iff$ CEP: 
The equivalence of (3) and (4) follows from Proposition \ref{factors}.
With Theorem \ref{rwithm} and a few applications of Lance's characterization of relatively weakly injective embeddings (Proposition \ref{rwi}), we have the embeddings
$$\A\otimes_{max} \A\subset C^*(\F_2)\otimes_{max} C^*(\F_2)$$
and 
$$C^*(\F_2)\otimes_{max} C^*(\F_2)\subset \A\otimes_{max} \A.$$
Since residual finite dimensionality passes to subalgebras, we have that (4) is equivalent to $C^*(\F_2)\otimes_{max} C^*(\F_2)$ being RFD (item (3) of Theorem \ref{Kir}).   

(4) $\iff$ (5): The reverse direction is obvious, so we assume (4) and show (5).  Since (3) and (4) have already been shown to be equivalent, it suffices to prove the claim in (5) for $\A\otimes_{min}\A$, and this essentially amounts to the argument that the minimal tensor product of two RFD $C^*$-algebras is RFD.  By Theorem \ref{nilprep}, for $i=1,2$, there exist separating families of finite-dimensional representations $\{\sigma^{(i)}_n\}_{n\in \N}$ of $\A$ where each $\sigma^{(i)}_n$ maps the generator to a nilpotent matrix. 
For $n,m\geq 1$, let $\pi_{n,m}=\sigma_n^{(1)}\otimes \sigma_m^{(2)}$ denote the *-homomorphism on $\A\otimes_{min} \A$ that maps $a\otimes b$ to $\sigma_n^{(1)}(a)\otimes \sigma_m^{(2)}(b)$. 
Then $\{\pi_{n,m}\}_{m,n\in \N}$ form the desired family of representations.
\end{proof}

\begin{remark}
To highlight the difficulty in proving part (3) of Theorem \ref{QWEPlem}, consider the following. 
Because residual finite dimensionality is preserved by extensions, $\A\otimes_{max}\A$ is RFD if and only if $\A_0\otimes_{max}\A_0$ is. Moreover, by Proposition \ref{factors} $\A_0\otimes_{max}\A_0$ is RFD if and only if it is canonically isomorphic to $\A_0\otimes_{min}\A_0$. However, recall that $\A_0$ is contractible, and contractibility is preserved under taking tensor products (just take the homotopy between the zero map and the identity on the contractible $C^*$-algebra and tensor it with the identity on the other $C^*$-algebra). In particular, this means that both $\A_0\otimes_{min}\A_0$ and $\A_0\otimes_{max}\A_0$ are contractible, which makes them very difficult to distinguish with topological invariants. 
The authors would like to thank Hannes Thiel for clarifying some points on this. 
\end{remark}

\begin{remark}
Using their universality, one can identify $\A\otimes_{max}\A$ with the universal unital $C^*$-algebra generated by a pair of doubly commuting contractions (i.e., the generators commute and commute with each other's adjoints). 
In \cite{CR} it was asked whether the universal $C^*$-algebra generated by a pair of commuting contractions, which can be identified with the maximal $C^*$-algebra for the bidisk algebra \cite[Example 2.3]{BP91}, is RFD. 
An affirmative answer would amount to a noncommutative *-polynomial analogue (in the spirit of \eqref{E:*vN}) to And\^{o}'s von Neumann Inequality from \cite{And63}.
It does not seem to be known whether this question is easier than the question for $\A\otimes_{max}\A$.

The universal (unital) $C^*$-algebra generated by a pair of unrelated contractions is a full (unital) free product of RFD $C^*$-algebras, which is RFD by \cite[Theorem 3.2]{EL}. 
\end{remark}

\begin{remark}\label{QD}
Though the question of whether or not $\A\otimes_{max}\A$ is RFD proves to be quite challenging, the proof of \cite[Proposition 7.4.5]{BO} can be readily adapted to show that $\A\otimes_{max}\A$ is quasidiagonal. 
\end{remark}



\section{$C^*$-algebras generated by multiple universal contractions}\label{remarks}


We conclude with a few remarks on $C^*$-algebras generated by two or more universal contractions. 

\subsection{Universal row contractions}\label{row}
We call an $m$-tuple of operators $(S_1,...,S_m)\in B(\H)^m$ an \textit{$m$-row contraction} if $\|\sum S_i S_i^*\|\leq 1$. (Note that the $S_i$ are not assumed to be commuting.) Such tuples have received considerable attention in multivariate operator theory, especially with regard to generalizing the commutant lifting theorem and von Neumann's inequality (e.g., \cite{Bun84}, \cite{Fra82}, \cite{Fra84}, \cite{Pop89}, and \cite{Pop91}).

 In this section, we generalize Theorems \ref{FNT} and \ref{nilprep} to universal $C^*$-algebras generated by 
 $m$-row contractions\footnote{Note that the row contraction relation automatically enforces a norm condition on the generators. As we mentioned in Section \ref{Projectivity}, this will guarantee that these universal $C^*$-algebras exist (and are separably acting).}: 
$$\R_m:=C^*_u\left\langle y_1,...,y_m: \left\|\sum_{i=1}^m y_iy_i^*\right\|\leq 1\right\rangle.$$ 
Some of the techniques we have developed for $\A=\R_1$ can be generalized to prove the same results for $\R_m$. 
In particular, where before we had von Neumann-type inequalities for noncommutative *-polynomials in one variable, in this section we obtain a noncommutative *-polynomial analogue to Popescu's von Neumann inequality for 
row contractions \cite[Theorem 2.1]{Pop91}.  



\begin{proposition}\label{universal}
Fix $1<m<\infty$ and let $\{y_1,...,y_m\}$ denote the generators of $\R_m$. Then for any $1\leq k<m$, any subset $\{y_{i_1},...,y_{i_k}\}\subseteq\{y_1,...,y_m\}$ forms a universal $k$-row contraction. In particular, $\|\sum_{j=1}^k y_{j_i}y_{j_i}^*\|=1$, and, moreover, each $y_i$ is a universal contraction. 
\end{proposition}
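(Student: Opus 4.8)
The plan is to reduce everything to the universal property of $\R_m$. The key observation is that the defining relation $\left\|\sum_{i=1}^m y_iy_i^*\right\|\leq 1$ is ``hereditary'' for subsets: if $(S_1,\dots,S_m)$ is any $m$-row contraction on a Hilbert space, then for any sub-$k$-tuple $(S_{i_1},\dots,S_{i_k})$ we have $\sum_{\ell=1}^k S_{i_\ell}S_{i_\ell}^* \leq \sum_{i=1}^m S_iS_i^*$ (since the omitted terms $S_jS_j^*$ are positive), hence $\left\|\sum_{\ell=1}^k S_{i_\ell}S_{i_\ell}^*\right\|\leq 1$. So any sub-tuple of an $m$-row contraction is a $k$-row contraction, and conversely any $k$-row contraction $(T_1,\dots,T_k)$ extends to an $m$-row contraction by padding with zeros, $(T_1,\dots,T_k,0,\dots,0)$. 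First I would record this padding trick, since it is what lets us move norm estimates between $\R_k$ and $\R_m$.

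Next I would argue universality directly via Proposition \ref{P:norm}-style reasoning (the $m$-variable analogue): fix any noncommutative $*$-polynomial $q$ in $k$ variables and any $k$-row contraction $(T_1,\dots,T_k)$ on some $\H$. Padding with zeros gives an $m$-row contraction $(T_1,\dots,T_k,0,\dots,0)$, and by the universal property of $\R_m$ the assignment $y_i\mapsto T_i$ for $i\leq k$, $y_i\mapsto 0$ for $i>k$, induces a $*$-homomorphism $\R_m\to C^*(T_1,\dots,T_k,I)$. Since $*$-homomorphisms are contractive, $\|q(T_1,\dots,T_k)\| = \|q(\text{image of }y_{i_1},\dots,y_{i_k})\|\leq \|q(y_{i_1},\dots,y_{i_k})\|$ computed in $\R_m$. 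Taking the supremum over all $k$-row contractions $(T_1,\dots,T_k)$ shows that $(y_{i_1},\dots,y_{i_k})$ attains the maximal norm on every $*$-polynomial and hence is a universal $k$-row contraction. For the ``in particular'' claims: applying this with $k=1$ shows each $y_i$ is a universal contraction, so $\|y_i\|=1$; and for the row-norm equality, the polynomial $q(z_1,\dots,z_k)=\sum z_jz_j^*$ has supremum $1$ over all $k$-row contractions (attained, e.g., by a single isometry padded with zeros, or by the standard Cuntz-type generators), so $\left\|\sum_{\ell=1}^k y_{i_\ell}y_{i_\ell}^*\right\|=1$ in $\R_m$.

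I do not anticipate a serious obstacle here; the statement is essentially a formal consequence of the universal property combined with the positivity/monotonicity of $\sum S_iS_i^*$. The one point that needs a line of care is verifying that the supremum of $\left\|\sum_{\ell=1}^k S_{i_\ell}S_{i_\ell}^*\right\|$ over $k$-row contractions actually equals (and is attained at) $1$ rather than merely being bounded by $1$ — this follows by exhibiting a concrete $k$-row contraction of row-norm exactly $1$, for instance the first $k$ isometries with orthogonal ranges on $\ell^2$ (or just $S_1$ an isometry, $S_2=\dots=S_k=0$, giving $\sum S_jS_j^* = S_1S_1^*$, a nonzero projection of norm $1$). With that in hand the equality $\left\|\sum y_{i_\ell}y_{i_\ell}^*\right\|=1$ is immediate from universality.
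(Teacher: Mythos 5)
Your proposal is correct and follows essentially the same route as the paper: the positivity estimate $0\leq \sum_{\ell} y_{i_\ell}y_{i_\ell}^*\leq \sum_{j} y_jy_j^*$ to see the sub-tuple is a $k$-row contraction, and the pad-with-zeros trick combined with the universal property of $\R_m$ to get the required surjection onto $C^*(Z_1,\dots,Z_k,1)$ for an arbitrary $k$-row contraction. Your extra care about exhibiting a $k$-row contraction whose row norm actually equals $1$ (so that $\|\sum y_{i_\ell}y_{i_\ell}^*\|=1$ rather than merely $\leq 1$) is a detail the paper leaves implicit, and it is handled correctly.
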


\begin{proof}
Since 
$$0\leq \sum_{j=1}^k y_{j_i}y_{j_i}^*\leq \sum_{j=1}^m y_jy_j^*,$$
it follows that the $\{y_{i_1},...,y_{i_k}\}$ form a $k$-row contraction. For any $k$-row contraction $(Z_1,...,Z_k)$, the restriction to $C^*(y_{i_1},...,y_{i_k})$ of the map $\R_m\to C^*(Z_1,...,Z_k,1)$ sending $y_{i_j}\to Z_j$ for $1\leq j\leq k$ and the rest of the generators to $0$ is a surjection. 
\end{proof}
By \cite[Theorem 5.1]{LS12}, $\R_m$ is projective for $1\leq m<\infty$, just like $\A$. From this it follows that $\R_m$ is RFD (see \cite[Example 3]{CR} for a more direct proof). It turns out we can say moreover that the noncommutative *-polynomials in the generators of $\R_m$ form a dense subset of elements that actually attain their norms under some finite-dimensional representation (just as with $\A$).

\begin{theorem}\label{FNT Rm}
Fix $1\leq m<\infty$, and let $q$ be a noncommutative *-polynomial in m variables of degree $d$. Then there exists a representation $\pi$ of $\R_m$ on a $(2m)^{d+1}$-dimensional Hilbert space such that $\|\pi(q(y_1,...,y_m))\|=\|q(y_1,...,y_m)\|$. 
\end{theorem}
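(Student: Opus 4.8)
\emph{Plan.} The plan is to transcribe the proof of Theorem~\ref{FNT}, the single genuinely new point being that compressing a row contraction to a conveniently chosen finite-dimensional subspace again produces a row contraction. Write $q$ for $q(y_1,\dots,y_m)\in\R_m$. First I would choose a state $\varphi$ on $\R_m$ with $\varphi(q^*q)=\|q\|^2$, pass to the associated GNS triple $(\pi,\H,\xi)$ so that $\|q\|=\|\pi(q)\xi\|$, and set
\[
\H_0=\text{span}\{\pi(g)\xi : g \text{ a $*$-monomial in } y_1,\dots,y_m,y_1^*,\dots,y_m^* \text{ of length } \le d\}.
\]
There are $(2m)^k$ such monomials of each length $k\geq 1$, together with the empty one, so $\dim\H_0\le\sum_{k=0}^{d}(2m)^k\le(2m)^{d+1}$ since $2m-1\ge1$. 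Let $V\colon\H_0\hookrightarrow\H$ be the inclusion, so that $VV^*$ is the projection of $\H$ onto $\H_0$ and $V^*V=I_{\H_0}$.

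The second step is the key identity. Because $\xi=\pi(1)\xi\in\H_0$ and applying at most $d$ of the operators $\pi(y_i),\pi(y_i)^*$ to $\xi$ keeps one inside $\H_0$, the projection $VV^*$ acts as the identity throughout the expansion of $Vq\!\left(V^*\pi(y_1)V,\dots,V^*\pi(y_m)V\right)V^*\xi$; hence this vector equals $q(\pi(y_1),\dots,\pi(y_m))\xi$, exactly as in \eqref{E:H}.

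The new ingredient is to check that $\bigl(V^*\pi(y_1)V,\dots,V^*\pi(y_m)V\bigr)$ is again an $m$-row contraction, so that the universal property of $\R_m$ yields a unital $*$-homomorphism $\pi_0\colon\R_m\to B(\H_0)$ with $\pi_0(y_i)=V^*\pi(y_i)V$ (and not merely a $*$-homomorphism defined on the subalgebra generated by the $V^*\pi(y_i)V$). This is immediate from $VV^*\le I$ and positivity: for each $i$,
\[
V^*\pi(y_i)V\,(V^*\pi(y_i)V)^*=V^*\pi(y_i)(VV^*)\pi(y_i)^*V\le V^*\pi(y_i)\pi(y_i)^*V,
\]
and summing over $i$ gives $\sum_i V^*\pi(y_i)V\,(V^*\pi(y_i)V)^*\le V^*\bigl(\sum_i\pi(y_i)\pi(y_i)^*\bigr)V\le V^*V=I_{\H_0}$, since $\bigl(\pi(y_1),\dots,\pi(y_m)\bigr)$ is a row contraction.

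Finally, exactly as in Theorem~\ref{FNT}, one estimates
\[
\|q(\pi_0(y_1),\dots,\pi_0(y_m))(V^*\xi)\|\ge\|Vq(\pi_0(y))V^*\xi\|=\|q(\pi(y))\xi\|=\|q\|,
\]
so $\|q(\pi_0(y))\|=\|q\|$, as it cannot exceed $\|q\|$ because $\pi_0$ is a $*$-homomorphism. Composing $\pi_0$ with a (not necessarily unital) inclusion $B(\H_0)\hookrightarrow B(\C^{(2m)^{d+1}})$ produces the desired representation $\pi$. I do not expect a real obstacle: the argument is essentially a copy of the proof of Theorem~\ref{FNT}, and the only substantively new step --- stability of the row-contraction relation under compression by $VV^*$ --- is the one-line computation above; the remaining work is bookkeeping, namely the dimension count and the invocation of the universal property of $\R_m$.
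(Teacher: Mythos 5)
Your proposal is correct and follows essentially the same route as the paper: the authors likewise reduce to the proof of Theorem \ref{FNT} and observe that the only new point is that the row-contraction relation is preserved under compression by a projection, verified by the same positivity computation $\|\sum_i PY_iPY_i^*P\|\le\|\sum_i Y_iY_i^*\|\le 1$. Your dimension count and the invocation of the universal property of $\R_m$ match the intended argument.
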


\begin{proof}
To prove this, we will need to change little from the proof of Theorem \ref{FNT}. The key there was that compressions of $\pi_0(x)$ are still contractions, which means they induce representations of $\A$. Hence, the argument can be adapted for any universal $C^*$-algebra $C^*\langle \mathcal{G}| \mathcal{R}\rangle$ with a finite set of generators and with relations that are preserved under compressions. All we have to prove is that $\R_m$ is such an algebra. To that end, suppose $Y_1,...,Y_m\in B(\H)$ form a row contraction, and let $P\in B(\H)$ be a projection. Since $P\leq I_{\mathcal{H}}$, we have that $Y_iPY_i^*\leq Y_iY_i^*$ for each $1\leq i\leq m$. Then 
$$\left\|\sum_{i=1}^m PY_iPY_i^*P\right\|\leq \left\|\sum_{i=1}^m Y_iPY_i^*\right\|\leq \left\|\sum_{i=1}^m Y_iY_i^*\right\|\leq 1. \qedhere$$
\end{proof}



Now we turn our attention back to nilpotents.
In Section \ref{nilp}, we gave three arguments showing that the universal contraction can be faithfully represented as a countable direct sum of nilpotent matrices. 
The third can be generalized to show the same for $\R_m$, for $1\leq m<\infty$. The main idea of the proof is to establish an ``exactness"-type result like Theorem \ref{approx}, which says that any faithful representation of $\R_m$ ``asymptotically factorizes" though the family of universal $C^*$-algebras generated by order $n$ nilpotents that form row contractions:

\[\R_{m,n}=C^*_u\left\langle y_{1,n},...,y_{m,n}: \begin{tabular}{c}$\|y_{i,n}\|\leq 1, y^n_{i,n}=0, i=1,...,m$,\\ $\|\sum_{i=1}^m y_{i,n}y^*_{i,n}\|\leq 1$
\end{tabular}\right\rangle\] 

Just as in the proof of Theorem \ref{approx}, we will first need to know that any faithful representation of a universal 
row contraction is the norm limit of row contractions whose entries are each nilpotent. To that end, we have the following lemma., 

\begin{lemma}\label{rowlim}
Let $1\leq m<\infty$ and $\pi:\mathcal{R}_m\to B(\mathcal{H})$ a faithful representation on a separable Hilbert space. 
Then, for each $1\leq j\leq m$, there exists a sequence $(T_{j,n})_{n\geq 1}$ in $B(\H)$ that converges in norm to $\pi(y_j)$, and for each $n\geq 1$, $T_{j,n}^n=0$ and $(T_{1,n},...,T_{m,n})$ forms a row contraction. 
\end{lemma}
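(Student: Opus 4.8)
The plan is to strip away the joint row-contraction constraint, apply the single-variable result Proposition \ref{nillim} to each coordinate, and then reinstate the constraint by a scalar rescaling, in direct analogy with the passage from Proposition \ref{nillim} to Lemma \ref{lem}.

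First I would note that each coordinate $\pi(y_j)$ is itself a universal contraction operator acting on the separable space $\H$. Indeed, by Proposition \ref{universal} the generator $y_j$ is a universal contraction inside $\R_m$, so $C^*(y_j,I)$ is canonically isomorphic to $\A$; since $\pi$ is faithful, its restriction to $C^*(y_j,I)$ is a faithful unital representation of $\A$ carrying the canonical generator to $\pi(y_j)$. Proposition \ref{nillim} then provides, for each $j$, a sequence of contractions $(N_{j,n})_{n\geq 1}$ in $B(\H)$ with $N_{j,n}^n=0$ for every $n\geq 1$ and $\|N_{j,n}-\pi(y_j)\|\to 0$ as $n\to\infty$. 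These $m$ sequences are produced coordinatewise and independently; the only property I will use is that each one converges in norm.

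Second I would rescale. Fix $n$. The tuple $(N_{1,n},\dots,N_{m,n})$ need not be a row contraction, but it becomes one in the limit: since $N_{i,n}\to\pi(y_i)$ in norm with all the operators contractive, $\sum_{i=1}^m N_{i,n}N_{i,n}^*\to\sum_{i=1}^m\pi(y_i)\pi(y_i)^*$ in norm, and $\bigl\|\sum_i\pi(y_i)\pi(y_i)^*\bigr\|=\bigl\|\sum_i y_iy_i^*\bigr\|_{\R_m}=1$ (the bound $\leq 1$ is the defining relation of $\R_m$; the bound $\geq 1$ holds because $\|y_1\|=1$ and $0\leq y_1y_1^*\leq\sum_i y_iy_i^*$, using Proposition \ref{universal}). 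Hence $d_n:=\bigl\|\sum_{i=1}^m N_{i,n}N_{i,n}^*\bigr\|^{1/2}\to 1$. Put $c_n:=\min\{1,d_n^{-1}\}$ (and $c_n:=1$ if $d_n=0$) and $T_{j,n}:=c_nN_{j,n}$. Then $\bigl\|\sum_i T_{i,n}T_{i,n}^*\bigr\|=c_n^2 d_n^2\leq 1$, so $(T_{1,n},\dots,T_{m,n})$ is a row contraction; $T_{j,n}^n=c_n^n N_{j,n}^n=0$; and since $c_n\to 1$ we get $\|T_{j,n}-\pi(y_j)\|\leq|c_n-1|\,\|N_{j,n}\|+\|N_{j,n}-\pi(y_j)\|\to 0$. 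This $(T_{j,n})_{n\geq 1}$ is the sequence required by the lemma.

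The one genuinely new point compared with the $m=1$ case, and the step I would flag as the potential sticking point, is reconciling the independently chosen nilpotent approximants with the single joint norm bound; the rescaling constants $c_n$ accomplish exactly this, and the small but essential fact that makes them work is that $\bigl\|\sum_i y_iy_i^*\bigr\|_{\R_m}$ equals precisely $1$, so that $c_n\to 1$ and the norm convergence survives. Beyond this I expect no difficulty: the substance is carried by Proposition \ref{nillim}, which in turn rests on the projectivity of $\R_m$ \cite[Theorem 5.1]{LS12} (hence on essentialness of faithful representations, Proposition \ref{essential}) together with the Apostol-Foias-Voiculescu nilpotent approximation theorem.
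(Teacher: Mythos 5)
Your proposal is correct and follows essentially the same route as the paper: apply Proposition \ref{universal} and Proposition \ref{nillim} coordinatewise to get nilpotent contractions $N_{j,n}$ with $N_{j,n}^n=0$, then rescale by $\min\{1,\|\sum_i N_{i,n}N_{i,n}^*\|^{-1/2}\}$, which tends to $1$ because $\|\sum_i y_iy_i^*\|=1$. Your explicit justification that this last norm equals exactly $1$ (via $y_1y_1^*\leq\sum_i y_iy_i^*$ and $\|y_1\|=1$) is a detail the paper leaves implicit, but the argument is the same.
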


\begin{proof}
By Propositions \ref{universal} and \ref{nillim}, each $\pi(y_j)$ is the norm limit of a sequence of nilpotent contractions $(N_{j,n})_{n\geq 1}$, which we can assume satisfy $N_{j,n}^n=0$ for all $n\geq 1$. 
Now for each $n\geq 1$, we let $\alpha_n=\min\{1, \|\sum_{j=1}^m N_{j,n}N^*_{j,n}\|^{-1/2}\}$
(where we assume $\alpha_n=1$ if $N_{j,n}=0$ for all $1\leq j\leq m$), and define $$T_{j,n}:=\alpha_nN_{j,n}$$ for each $1\leq j\leq m$. We immediately have $T^n_{j,n}=\alpha^nN^n_{j,n}=0$ for all $n\geq 1$ and $1\leq j\leq m$. Moreover, since $N_{j,n}\to \pi(y_j)$ for each $1\leq j\leq m$, it follows from Proposition \ref{universal} that $\alpha_n\to 1$, and hence $T_{j,n}\to \pi(y_j)$ for each $1\leq j\leq m$. Finally we check that $(T_{1,n},...,T_{m,n})$ forms a row contraction for each $n\geq 1$. If $(N_{1,n},...,N_{m,n})$ is a row contraction, then $\alpha_n=1$. If $\|\sum_{j=1}^m N_{j,n}N^*_{j,n}\|>1$, then 
$$\left\|\sum_{j=1}^m T_{j,n}T^*_{j,n}\right\|=\left\|\sum_{j=1}^m \alpha^2_nN_{j,n}N^*_{j,n}\right\|=\left\|\sum_{j=1}^m N_{j,n}N^*_{j,n}\right\|^{-1}\left\|\sum_{j=1}^m N_{j,n}N^*_{j,n}\right\|=1.$$
\end{proof}

\begin{theorem}\label{nilrow}
For $1\leq m<\infty$, $\R_m$ has a separating family of finite-dimensional representations that map each generator to a nilpotent. In particular, there exist nilpotent matrices $N_{k_1},...,N_{k_m}\in \mathbb{M}_{j_k}$, $k\in \N$, such that $(\oplus_k N_{k_1},...,\oplus_k N_{k_m})$ is a universal 
$m$-row contraction. 
\end{theorem}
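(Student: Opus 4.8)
The plan is to follow the same ``asymptotic factorization plus RFD of the intermediate algebras'' strategy used for the third proof of Theorem \ref{nilprep}, now carried out with the algebras $\R_{m,n}$ in place of the $\A_n$. First I would take a faithful representation $\pi : \R_m \to B(\H)$ on a separable Hilbert space; since $\R_m$ is projective (by \cite[Theorem 5.1]{LS12}), hence RFD and by Proposition \ref{essential} we may assume $\pi$ is essential, so that the hypotheses of Proposition \ref{nillim} are available for each generator. Then I would invoke Lemma \ref{rowlim} to produce, for each $n \geq 1$, an $m$-tuple $(T_{1,n},\dots,T_{m,n})$ in $B(\H)$ that is a row contraction with $T_{j,n}^n = 0$ for all $j$, and with $T_{j,n} \to \pi(y_j)$ in norm for each $j$.

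The next step is the ``exactness''-type result analogous to Theorem \ref{approx}. Let $\phi_n : \R_m \to \R_{m,n}$ be the canonical surjections sending $y_j \mapsto y_{j,n}$ (these exist by universality, since the generators of $\R_{m,n}$ form an $m$-row contraction). Because the $T_{j,n}$ satisfy precisely the defining relations of $\R_{m,n}$, universality of $\R_{m,n}$ gives surjective unital $*$-homomorphisms $\psi_n : \R_{m,n} \to C^*(T_{1,n},\dots,T_{m,n},I) \subseteq B(\H)$ with $\psi_n(y_{j,n}) = T_{j,n}$. Then for a noncommutative $*$-polynomial $q$ in $m$ variables, $\psi_n \circ \phi_n(q(y_1,\dots,y_m)) = q(T_{1,n},\dots,T_{m,n})$, which converges in norm to $q(\pi(y_1),\dots,\pi(y_m)) = \pi(q(y_1,\dots,y_m))$ by norm-convergence of the $T_{j,n}$ and the fact that $q$ is Lipschitz on bounded sets. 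Hence $\psi_n \circ \phi_n \to \pi$ pointwise in norm on the dense subalgebra of $*$-polynomials, and so on all of $\R_m$; in particular the family $\{\phi_n\}_{n \geq 1}$ is separating.

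Finally I would split each $\phi_n$ into finite-dimensional pieces. By \cite[Theorem 5.1]{LS12} (or the cited direct argument), each $\R_{m,n}$ is projective, hence RFD by Proposition \ref{projRFD}; let $\{\rho_{n,k}\}_{k \in \N}$ be a separating family of finite-dimensional representations of $\R_{m,n}$. Each $\rho_{n,k}(y_{j,n})$ is nilpotent because $y_{j,n}^n = 0$ is a relation of $\R_{m,n}$. Then $\{\rho_{n,k} \circ \phi_n\}_{n,k \in \N}$ is a separating family of finite-dimensional representations of $\R_m$ sending every generator to a nilpotent matrix; assembling all of them into a single (reducible) direct-sum representation on $\bigoplus_{n,k} \C^{j_{n,k}}$ and reading off the images of the generators yields nilpotent matrices $N_{k_1},\dots,N_{k_m}$ whose direct sums form a faithfully-represented, hence universal, $m$-row contraction.

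The main obstacle is Lemma \ref{rowlim}, which is where the extra difficulty relative to the $m=1$ case lives: one must simultaneously perturb all $m$ generators to nilpotents of the prescribed order \emph{and} preserve (asymptotically) the joint row-contraction condition $\|\sum y_j y_j^*\| \leq 1$. But that lemma is already established above — Proposition \ref{universal} lets us apply the single-operator nilpotent approximation (Proposition \ref{nillim}) coordinatewise, and the uniform rescaling by $\alpha_n \to 1$ restores the joint norm bound without destroying either the nilpotency orders or the norm-convergence — so the remaining work in the present theorem is essentially bookkeeping, modeled line-by-line on the third proof of Theorem \ref{nilprep}.
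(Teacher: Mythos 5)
Your proposal is correct and follows essentially the same route as the paper's proof: Lemma \ref{rowlim} supplies the nilpotent row contractions, the maps $\psi_n\circ\phi_n$ converge pointwise to a faithful representation so that the $\phi_n$ separate points, and projectivity of each $\R_{m,n}$ (the paper cites \cite[Theorem 2.3]{LS12'} for this, rather than \cite[Theorem 5.1]{LS12}, which covers $\R_m$ itself) gives residual finite dimensionality via Proposition \ref{projRFD}. Composing with separating families of finite-dimensional representations of the $\R_{m,n}$ finishes the argument exactly as in the paper.
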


\begin{proof}
Let $\pi$ be a faithful representation of $\R_m$ on a separable Hilbert space $\H$, and let $(T_{i,n})_{n\geq 1}$ be the sequences of nilpotents guaranteed by Lemma \ref{rowlim} for each $1\leq i\leq m$.  

By universality, for each $n\geq 1$, we can define a surjective *-homomorphsim $\psi_n$ from 
\[\R_{m,n}=C^*_u\left\langle y_{1,n},...,y_{m,n}: \begin{tabular}{c}$\|y_{i,n}\|\leq 1, y^n_{i,n}=0, i=1,...,m$,\\ $\|\sum_{i=1}^m y_{i,n}y^*_{i,n}\|\leq 1$
\end{tabular}\right\rangle\] 
to $C^*(1,T_{1,n},...,T_{m,n})$ by sending $y_{i,n}\mapsto T_{i,n}$ for each $i=1,...,m$. Let $\phi_n:\R_m\to \R_{m,n}$ denote the surjection induced by sending generators to generators. Then, for each $i=1,...,m$, 
$$\|\psi_n\circ\phi_n(y_i)-\pi(y_i)\|\to 0,$$
which means that $\psi_n\circ\phi_n$ converge pointwise in norm to $\pi$. In particular, the $\{\phi_n\}$ separate the points of $\mathcal{R}_m$. By \cite[Theorem 2.3]{LS12'}, each $\R_{m,n}$ is projective and hence RFD by Proposition \ref{projRFD}. 
So, composing the $\{\phi_n\}$ with the separating families of finite-dimensional representations of the $\R_{m,n}$'s gives us the desired family of representations of $\R_m$.

The matrices in the statement of the theorem come from taking the direct sum of this family of representations. 
\end{proof}


\begin{remark}
Since the generators of $\R_m$ are all universal contractions, Example \ref{notnilp} also shows that Theorem \ref{FNT Rm} will not hold if we also want the generators to be mapped to nilpotents. 
\end{remark}

We conclude this section by rephrasing the results in terms of a Popescu-von Neumann inequality for noncommutative *-polynomials on 
row contractions. 

\begin{corollary}\label{P-vN}
Let $q$ be a noncommutative *-polynomial in $m$ variables of degree $d$ and $(T_1,...,T_m)$ a row contraction. Then 
\begin{align*}
    \|&q(T_1,...,T_m)\|\leq \sup\{\|q(A_1,...,A_m)\|: (A_1,...,A_m)\in (\mathbb{M}_n^m)_{\leq 1}, n\geq 1\}\\
    &=\max\{\|q(A_1,...,A_m)\|: (A_1,...,A_m)\in (\mathbb{M}_{(2m)^{d+1}}^{m})_{\leq 1}\}\\
    &=\sup\{\|q(N_1,...,N_m)\|: (N_1,...,N_m)\in (\mathbb{M}_n^m)_{\leq 1}, N_i^n=0, 1\leq i\leq m, n\geq 1\}.
\end{align*}
\end{corollary}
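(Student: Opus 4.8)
The plan is to read off all three relations from the universal property of $\R_m$ together with Theorems \ref{FNT Rm} and \ref{nilrow}. Write $y=(y_1,\dots,y_m)$ for the generating $m$-row contraction of $\R_m$, and fix a noncommutative $*$-polynomial $q$ of degree $d$ with constant term $c_0$. For any $m$-row contraction $(T_1,\dots,T_m)$ on a Hilbert space, universality supplies a unital $*$-homomorphism $\R_m\to C^*(T_1,\dots,T_m,I)$ sending $y_i\mapsto T_i$, and since $*$-homomorphisms are contractive we get $\|q(T_1,\dots,T_m)\|\leq\|q(y)\|$. Specializing to matrix row contractions shows that $\|q(y)\|$ dominates the first supremum; as the reverse inequality is immediate once a single attaining tuple is produced, the corollary reduces to exhibiting, in each of the two refined sets, a tuple at which $\|q(y)\|$ is attained. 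We also record that the zero tuple is a row contraction, so $|c_0|=\|q(0,\dots,0)\|\leq\|q(y)\|$.

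For the middle equality I would invoke Theorem \ref{FNT Rm} to get a representation $\pi$ of $\R_m$ on a Hilbert space of dimension $(2m)^{d+1}$ (the bound comes, as in Theorem \ref{FNT}, from spanning by the $\pi$-images of the $*$-monomials of length $\leq d$, of which there are $\sum_{k=0}^{d}(2m)^k\leq (2m)^{d+1}$) with $\|\pi(q(y))\|=\|q(y)\|$. Put $A_i=\pi(y_i)$. Because $\pi$ is a $*$-homomorphism it preserves the defining relation $\|\sum_i y_iy_i^*\|\leq 1$, so $(A_1,\dots,A_m)$ is again a row contraction, each $\|A_i\|\leq 1$. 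Evaluating $q$ at $(A_1,\dots,A_m)$ using the ambient matrix unit rather than the support projection $\pi(1)$ changes only the constant term, contributing $c_0$ on the orthocomplement of $\pi(1)$, so $\|q(A_1,\dots,A_m)\|=\max(\|\pi(q(y))\|,|c_0|)=\|q(y)\|$. Thus $(A_1,\dots,A_m)\in(\mathbb{M}_{(2m)^{d+1}}^m)_{\leq 1}$ attains $\|q(y)\|$, and since no matrix row contraction of any size can exceed $\|q(y)\|$, the first supremum equals this maximum.

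The last equality is the same argument with Theorem \ref{nilrow} in place of Theorem \ref{FNT Rm}: that theorem furnishes a separating family of finite-dimensional representations of $\R_m$ each sending every generator to a nilpotent matrix, so $\|q(y)\|=\sup_\rho\|\rho(q(y))\|$ over the family, and each $\rho$ produces a matrix row contraction $(\rho(y_1),\dots,\rho(y_m))$ all of whose entries are nilpotent (a nilpotent matrix in $\mathbb{M}_n$ automatically satisfies $N^n=0$); the reverse inequality holds because nilpotent matrix row contractions form a subset of all matrix row contractions. Combining the three displays with $\|q(T_1,\dots,T_m)\|\leq\|q(y)\|$ completes the proof. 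I do not expect a genuine obstacle here: the substance has been packaged into Theorems \ref{FNT Rm} and \ref{nilrow}, and the only care needed is the bookkeeping above — the dimension count and the observation that, unlike a bare norm bound on a single generator, the row-contraction relation really is preserved by arbitrary $*$-homomorphisms and by the compressions used in the proof of Theorem \ref{FNT Rm}.
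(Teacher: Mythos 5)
Your proposal is correct and follows exactly the route the paper intends: the corollary is stated without proof as an immediate rephrasing of the universal property of $\R_m$ together with Theorems \ref{FNT Rm} and \ref{nilrow}, which is precisely how you assemble it. Your extra bookkeeping about the non-unital inclusion into the ambient matrix algebra (the constant term contributing $c_0$ on the complement of the support projection, controlled by $|c_0|\leq\|q(y)\|$ via the zero tuple) is a point the paper glosses over, and it is handled correctly.
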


\subsection{Universal Pythagorean $C^*$-algebras}\label{pythagorean}

An $m$-tuple of operators $(S_1,...,S_m)$ in $B(\H)^m$ satisfying the identity 
$$\sum_{i=1}^m S_i^*S_i=1$$
is often called a column isometry. Such operators have recently been utilized in \cite{BJ} to construct several interesting unitary representations of Thompson's groups $F$ and $T$ (and their $n$-ary versions $F_n$ and $T_n$). We adopt the language of \cite{BJ} in calling 
$$\mathcal{P}_m=C^*_u\left\langle a_1,...,a_m: \sum_{i=1}^m a_i^*a_i=1\right\rangle,$$
universal Pythagorean $C^*$-algebras. 

Below, we adapt Choi's technique from \cite[Theorem 7]{Cho80} to show that each $\mathcal{P}_m$ is RFD for $m>1$. Note that this is not true for $m=1$.  The universal $C^*$-algebra of a single isometry is not RFD because every finite-dimensional isometry is unitary.  
\begin{theorem}\label{rowisomRFD}
 For each $m>1$, $\mathcal{P}_m$ is RFD.
\end{theorem}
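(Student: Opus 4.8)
The plan is to mimic Choi's argument for the residual finite dimensionality of $C^*(\F_n)$, exactly as suggested by the paragraph preceding the statement, and as was done for $\A$ in Theorem \ref{FNT} and for $\R_m$ in Theorem \ref{FNT Rm}. The obstruction to a naive compression argument is that the Pythagorean relation $\sum_i a_i^* a_i = 1$ is \emph{not} preserved under compression --- if $(S_1,\dots,S_m)$ is a column isometry and $P$ is a projection, then $\sum_i (PS_iP)^*(PS_iP)$ need not equal $P$, it is only $\leq P$. So compressions do not induce representations of $\mathcal{P}_m$; they induce representations of the universal $C^*$-algebra of a column \emph{contraction}. The fix, and the crux of the whole proof, is that when $m > 1$ one can \emph{dilate} back: given a column contraction $(C_1,\dots,C_m)$ on a finite-dimensional space $\H_0$, one can find a finite-dimensional $\H_1 \supseteq \H_0$ and a column isometry $(S_1,\dots,S_m)$ on $\H_1$ with $P_{\H_0} S_i|_{\H_0} = C_i$. (This is where $m > 1$ is essential; for $m=1$ a column isometry is an isometry, an isometry of a finite-dimensional space is unitary, and one cannot squeeze a non-unitary contraction into it, which matches the stated failure at $m = 1$.)

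Concretely, I would proceed as follows. Fix a noncommutative $*$-polynomial $q$ in $m$ variables of degree $d$ and an element $\xi$, $\|\xi\|=1$, in the GNS space of a state $\varphi$ on $\mathcal{P}_m$ with $\varphi(q(a_1,\dots,a_m)^*q(a_1,\dots,a_m)) = \|q(a_1,\dots,a_m)\|^2$; let $(\pi,\H,\xi)$ be the GNS data. Set $\H_0 = \operatorname{span}\{\pi(g(a_1,\dots,a_m))\xi : g \text{ a } *\text{-monomial of length} \leq d\}$, a subspace of dimension $\leq (2m)^{d+1}$, let $V : \H_0 \hookrightarrow \H$ be the inclusion, and put $C_i = V^*\pi(a_i)V$. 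Exactly as in Theorem \ref{FNT}, because $VV^*$ acts as the identity throughout the expansion, $V q(C_1,\dots,C_m)V^*\xi = q(\pi(a_1),\dots,\pi(a_m))\xi$, hence the (a priori non-unital, non-representation) tuple $(C_1,\dots,C_m)$ already attains the norm $\|q\|$ on the vector $V^*\xi$. The tuple $(C_i)$ is a column contraction since $\sum_i C_i^* C_i = V^*(\sum_i \pi(a_i)^* P_{\H_0}\pi(a_i))V \leq V^* (\sum_i \pi(a_i)^*\pi(a_i))V = V^* V = I_{\H_0}$. Now dilate $(C_i)$ to a column isometry $(S_1,\dots,S_m)$ on some finite-dimensional $\H_1 \supseteq \H_0$; the standard trick is to add a ``defect'' coordinate, e.g. enlarge $\H_0$ by a copy of $\H_0$ (using that $m \geq 2$ gives a free slot) and set $S_1 = \begin{pmatrix} C_1 & 0 \\ D & 0\end{pmatrix}$, $S_i = \begin{pmatrix} C_i & 0 \\ 0 & 0 \end{pmatrix}$ for $i \geq 2$ where $D = (I - \sum_i C_i^* C_i)^{1/2}$, and check $\sum_i S_i^* S_i = \begin{pmatrix} \sum C_i^* C_i + D^2 & 0 \\ 0 & 0\end{pmatrix}$ — which is not yet the identity on $\H_1$, so in fact one should instead take $\H_1 = \H_0 \oplus \H_0'$ with the isometry landing into the first summand only; the cleanest version is the block operator that makes the column of $S_i$'s into a genuine isometry $\H_0 \to \H_0^{\oplus 2}$, viewed inside $\H_1 = \H_0^{\oplus 2}$ with the $S_i$ zero on the second summand, so that $\sum S_i^* S_i = I_{\H_0} \oplus 0$, and then one restricts to the nondegenerate part. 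I would work the bookkeeping so that $(S_i)$ genuinely satisfies $\sum_i S_i^* S_i = 1$ on the relevant finite-dimensional space. Then $(S_i)$ induces a finite-dimensional representation $\rho : \mathcal{P}_m \to B(\H_1)$ with $\rho(a_i) = S_i$, and since compressing $\rho$ to $\H_0$ gives back $(C_i)$, we get $\|\rho(q(a_1,\dots,a_m))\| \geq \|P_{\H_0}\rho(q)|_{\H_0}\| = \|q(C_1,\dots,C_m)\| \geq \|q(a_1,\dots,a_m)\|$, so the inequality is an equality and $\rho$ attains the norm of $q$.

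Finally, since every $*$-polynomial in the generators attains its norm in a finite-dimensional representation and such polynomials are dense in $\mathcal{P}_m$, the family of all finite-dimensional representations of $\mathcal{P}_m$ is separating, i.e.\ $\mathcal{P}_m$ is RFD. The $m > 1$ hypothesis enters exactly at the dilation step — with only one generator there is no room to append a defect coordinate without destroying the relation, consistent with the remark that the universal $C^*$-algebra of a single isometry fails to be RFD. I expect the main obstacle to be stating and verifying the finite-dimensional column-isometry dilation cleanly (getting the bookkeeping right so the dilation lands in a finite-dimensional space and exactly satisfies $\sum_i S_i^* S_i = 1$), whereas the GNS/compression half is a routine transcription of the proof of Theorem \ref{FNT}. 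As a bonus, this argument yields a dimension bound: $\|q\|$ is attained in a representation of dimension at most $2\cdot(2m)^{d+1}$ (or similar, depending on the precise dilation used), giving a Pythagorean analogue of Theorem \ref{FNT}.
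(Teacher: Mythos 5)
Your compression step (GNS, cut down to the span of $\pi(g)\xi$ over $*$-monomials $g$ of length $\leq d$) and your dilation idea are both sound as far as they go, and the paper's proof uses essentially the same block dilation -- it resolves the bookkeeping you struggled with by putting the identity in the lower-right corner of the dilation of the \emph{first} generator and the defect $(I-\sum C_i^*C_i)^{1/2}$ in the lower-left corner of the \emph{second}, so that $\sum S_i^*S_i$ really is the identity on $\H_0\oplus\H_0$; this is exactly where $m>1$ enters, as you guessed. The genuine gap is the line
$\|\rho(q)\|\geq\|P_{\H_0}\rho(q)|_{\H_0}\|=\|q(C_1,\dots,C_m)\|$.
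Compression to a subspace is not a $*$-homomorphism: it commutes with the generators (degree one) but not with general $*$-monomials, so $P_{\H_0}\,q(S_1,\dots,S_m)|_{\H_0}\neq q(C_1,\dots,C_m)$ once $\deg q\geq 2$. Concretely, with $S_2=\bigl(\begin{smallmatrix} C_2 & 0\\ D & 0\end{smallmatrix}\bigr)$ one gets $P_{\H_0}S_2^*S_2|_{\H_0}=C_2^*C_2+D^2\neq C_2^*C_2$: the defect feeds back into $\H_0$ under subsequent applications of the $S_i^*$. In Theorems \ref{FNT} and \ref{FNT Rm} no dilation was needed because the defining relations survive compression; here they do not, and the dilation that restores them destroys the compression identity. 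Your claimed equality, hence the chain $\|\rho(q)\|\geq\|q(C)\|\geq\|q\|$ and the bonus dimension bound $2(2m)^{d+1}$, are therefore unjustified.

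The paper repairs this by giving up exact finite-dimensional norm attainment and arguing asymptotically instead: represent $\mathcal{P}_m$ faithfully and nondegenerately, compress the generators to an increasing sequence of rank-$n$ projections $P_n\to I$, dilate each compressed tuple to a column isometry on $P_n\H\oplus P_n\H$ as above, and observe that the defect $(P_n-C_n)^{1/2}\to 0$ strongly, so the dilated tuples converge strong$*$ to $(A_1\oplus I,A_2\oplus 0,\dots,A_m\oplus 0)$. That limit tuple generates a copy of the original algebra as a direct summand, so strong$*$ lower semicontinuity of the norm gives $\liminf_n\|q(A_1^{(n)},\dots,A_m^{(n)})\|\geq\|q(A_1,\dots,A_m)\|$ for every $*$-polynomial $q$, which is enough for $\bigoplus_n\pi_n$ to be isometric and hence for $\mathcal{P}_m$ to be RFD. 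If you want to salvage your write-up, replace the exact-attainment claim with this $\varepsilon$-version; whether the norm of every $*$-polynomial is actually attained in some finite-dimensional representation of $\mathcal{P}_m$ is not addressed by either argument.
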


\begin{proof}
First, we faithfully and nondegenerately represent $\mathcal{P}_m$ on a separable Hilbert space $\H$, denoting the generators by $A_1,...,A_m$. Let $(P_n)$ be an increasing sequence of projections, each of rank $n$, such that $P_n$ converges strongly to $I_{\mathcal{H}}$. 
For each $n\geq 1$ and $1\leq i\leq m$, 
let $B_i^{(n)}=P_nA_iP_n$ and $C_n= \sum_{i=1}^m B_i^{(n)*}B_i^{(n)}$, and note that $C_n\leq P_n$ for each $n\geq 1$. 

Define operators $A_1^{(n)},...,A_m^{(n)}\in B(P_n\H\oplus P_n\H)$ by 
$$A_1^{(n)}=\begin{pmatrix} B_1^{(n)} & 0\\ 0& P_n\end{pmatrix}, \hspace{.25 cm} A_2^{(n)}=\begin{pmatrix} B_2^{(n)} & 0\\ (P_n-C_n)^{1/2} & 0\end{pmatrix}, \hspace{.25 cm} \text{and} \hspace{.25 cm} A_i^{(n)}=\begin{pmatrix} B_i^{(n)} & 0 \\ 0 & 0\end{pmatrix}$$
for each $2<i\leq m$. For each $n$, we compute  
\begin{align*}
    \sum_{i=1}^m & A_i^{(n)*}A_i^{(n)}\\
    &=\begin{pmatrix} B_1^{(n)*}B_1^{(n)} & 0\\ 0& P_n\end{pmatrix}+\begin{pmatrix} B_2^{(n)*}B_2^{(n)}+P_n-C_n & 0\\ 0& 0 \end{pmatrix} +\begin{pmatrix} \sum_{i=3}^m B_i^{(n)*}B_i^{(n)} & 0\\ 0 & 0 \end{pmatrix}\\
    &=\begin{pmatrix} P_n & 0\\ 0 &P_n\end{pmatrix}=I_{P_n\H\oplus P_n\H}.
\end{align*}
So, $(A_1^{(n)},...,A_m^{(n)})$ satisfies the column isometry identity in $B(P_n\H\oplus P_n\H)\simeq \mathbb{M}_{2n}$.
For each $n\geq 1$, let $\pi_n: C^*(I_{\mathcal{H}},A_1,...,A_m)\to C^*(P_n\oplus P_n, A_1^{(n)},...,A_m^{(n)})\subset B(P_n\H\oplus P_n\H)$ be the representation induced by $A_i\mapsto A_i^{(n)}$ for each $1\leq i\leq m$. It remains to show that $\pi=\oplus_n\pi_n$ is an isometry. 

We now consider $P_n \H \oplus P_n \H$ as a subspace of $\H \oplus \H$ in the obvious way, thereby viewing each $A_i^{(n)}$ as an operator on $\H \oplus \H$ (acting as zero on $(P_n \H \oplus P_n \H)^\perp$).  Since the $P_n$ converge *-strongly to $I_{\mathcal{H}}$, 
we have that $A_1^{(n)}$ converges *-strongly to $A_1\oplus 1_\H$ and  $A_i^{(n)}$ converges *-strongly to $A_i\oplus 0$ for $2\leq i\leq m$. So, for any noncommutative *-polynomial $q$ in two variables, $$q(A_1^{(n)},...,A_m^{(n)})\xrightarrow{S^*OT} q(A_1\oplus I_{\mathcal{H}},A_2\oplus 0,...,A_m\oplus 0).$$
Note that $(A_1\oplus I_{\mathcal{H}}, A_2\oplus 0,...,A_m\oplus 0)$ is also a universal $m$-column isometry. Indeed, it clearly satisfies the column isometry identity, and, since $$\|q(A_1\oplus I_{\mathcal{H}},A_2\oplus 0,...,A_m\oplus 0)\|=\|q(A_1,...,A_m)\oplus q(I_{\mathcal{H}},0,...,0)\|\geq \|q(A_1,...,A_m)\|$$ for any noncommutative *-polynomial $q$ in two variables, the natural surjection induced by mapping $A_1\mapsto A_1^{(n)}\oplus 1_\H$ and $A_i\mapsto A_i^{(n)}\oplus 0$ for $2\leq i\leq m$ is isometric.

Now, let $q$ be a noncommutative *-polynomial in $m$ variables and $\epsilon>0$. Assume $\|q(A_1,...,A_m)\|=1$. Then  $\|q(A_1\oplus I_{\mathcal{H}},A_2\oplus 0,...,A_m\oplus 0)\|=1$ and $$\|q(A_1^{(n)},...,A_m^{(n)})\|\geq 1-\epsilon,$$ for all sufficiently large $n$. Hence, 
$$\|\pi(q(A_1,...,A_m))\|\geq \|\pi_n(q(A_1,...,A_m))\|=\|q(A_1^{(n)},...,A_m^{(n)})\|\geq 1-\epsilon$$ for all sufficiently large $n$. Since $\epsilon$ was arbitrary, $\pi$ is isometric on the set of noncommutative *-polynomials on $A_1,...,A_m$, which, by continuity, means it is isometric on $C^*(I_{\mathcal{H}},A_1,...,A_m)\simeq \mathcal{P}_m$. 
\end{proof}


\begin{thebibliography}{00}


\bibitem{And63}
T. And\^{o}, On a pair of commutative contractions, {\it Acta Sci. Math. (Szeged)} {\bf 24} (1963),
88--90.

\bibitem{AFV}
C. Apostol, C. Foias, and D. Voiculescu, 
On the norm-closure of nilpotents, II,
\textit{Rev. Roum. Math. Pures et Appl.} {\bf 19} (1974), 549--577.

\bibitem{Arv72}
W. Arveson, Subalgebras of $C^*$-algebras, II, {\it Acta Math.} {\bf 128} (1972), 271--308.

\bibitem{Arv77}
W. Arveson, Notes on extensions of $C^*$-algebras, {\it Duke Math. J.} {\bf 44} (1977), 329--355.

\bibitem{Bla85} B. Blackadar, Shape theory for $C^*$-algebras, {\it Math. Scand.} {\bf 56} (1985), 249--275.

\bibitem{BD}
D. P. Blecher and B. Duncan, Nuclearity-related properties for nonselfadjoint operator algebras, {\it J. Operator Theory}
{\bf 65} (2011), 47--70.

\bibitem{BP91}
D. P. Blecher and V. I. Paulsen,
Explicit construction of universal operator algebras and applications to polynomial factorization, 
{\it Proc. Amer. Math. Soc.} {\bf 112} (1991), 839--850.

\bibitem{BN12}
B. Brenken and Z. Niu,
The $C^*$-algebra of a partial isometry,
{\it Proc. Amer. Math. Soc.} {\bf 140} (2012), 199--206.

\bibitem{BJ}
A. Brothier and V. Jones,
Pythagorean representations of Thompson's groups,
preprint, 2018 arXiv: 1807.06215. 

\bibitem{BO}
N. Brown and N. Ozawa, 
\textit{$C^*$-Algebras and Finite-Dimensional Approximations},
Graduate Studies in Mathematics \textbf{88}, American Mathematical Society, Providence, 2008.

\bibitem{Bun84}
J. Bunce, Models for $n$-tuples of non-commuting operators, {\it J. Funct. Anal.} {\bf 57} (1984),
21--30.


\bibitem{Cho80}
M. Choi,
The full $C^*$-algebra of the free group on two generators,
\textit{Pacific J. Math.} {\bf 87} (1980), 
41--48.

\bibitem{CR}
R. Clou\^{a}tre and C. Ramsey, Residually finite-dimensional operator algebras, preprint, 2018 arXiv:1806.00038.

\bibitem{Con76}
A. Connes,
Classification of injective factors. Cases $II_1, II_\infty, III_\lambda, \lambda\neq 1$,
{\it Ann. Math.} (2) {\bf 104} (1976), 
73--115.

\bibitem{Cou18}
K. Courtney, $C^*$-algebras and their finite-dimensional representations, PhD thesis (2018), University of Virginia, Charlottesville, United States.

\bibitem{CS}
K. Courtney and T. Shulman, Elements of $C^*$-algebras attaining their norm in a finite-dimensional
representation, {\it Canad. J. Math.} to appear. arxiv:1707.01949.

\bibitem{CH}
R. E. Curto and D. A. Herrero,
On closures of joint similarity orbits,
{\it Integral Equations Operator Theory} \textbf{8} (1985), 489--556. 

\bibitem{Dix77}
J. Dixmier, \textit{$C^\ast$-algebras}, North Holland Publ. Co., Amsterdam, 1977.



\bibitem{EL} R. Exel and T. Loring, Finite-dimensional representations of free product $C^*$-algebras, {\it Int. J. Math}, {\bf 3} (1992), 
469--476.


\bibitem{Fra82} 
A. Frazho, Models for non-commuting operators, {\it J. Funct. Anal.} {\bf 48} (1982), 1--11.

\bibitem{Fra84}
A. Frazho, Complements to models for noncommuting operators, {\it J. Funct. Anal.} {\bf 59}
(1984), 445--461.

\bibitem{FNT}
T. Fritz, T. Netzer, and A. Thom, Can you compute the operator norm? {\it Proc. Amer. Math. Soc.} \textbf{142} (2014), 4265--4276. 



\bibitem{HD}
U. Haagerup and P. de la Harpe, The numerical radius of a nilpotent operator on a
Hilbert space, {\it Proc. Amer. Math. Soc.} {\bf 115} (1992), 
371--379.

\bibitem{Had77}
D. Hadwin, An operator-valued spectrum, {\it Indiana University Math. J.} {\bf 26} (1977), 329--340.

\bibitem{Had78}
D. Hadwin, An asymptotic double commutant theorem for $C^*$-algebras, {\it Trans. Amer. Math. Soc.} {\bf 244} (1978), 273--297.

\bibitem{Had81}
D. Hadwin, Nonseparable approximate equivalence, {\it Trans. Amer. Math. Soc.} \textbf{266} (1981), 203--231.



\bibitem{Had14} 
D. Hadwin, A lifting characterization of RFD $C^*$-algebras,
{\it Math. Scand.} {\bf 115} (2014), 
85--95.


\bibitem{HS17}
D. Hadwin and T. Shulman, Variations of projectivity for $C^*$-algebras, preprint,
arXiv:1709.01379. 

\bibitem{Hal82} P. R. Halmos, {\it A Hilbert Space Problem Book}, Second Edition, Springer-Verlag, New York, 1982. 




\bibitem{Her81} 
D. A. Herrero, Quasidiagonality, similarity and approximation by nilpotent operators, {\it Indiana
Univ. Math. J.} {\bf 30} (1981), 
199--232.

\bibitem{Kir93}
E. Kirchberg, 
On non-semisplit extensions, tensor products and exactness of group $C^*$-algebras,
\textit{Invent. Math.} {\bf 112} (1993), 449--489.

\bibitem{Kir94}
E. Kirchberg, Commutants of unitaries in UHF algebras and functorial properties of exactness,
{\it J. Reine Angew. Math.} {\bf452} (1994), 39--78. 





\bibitem{K97}
C. S. Kubrusly, 
\textit{An Introduction to Models and Decompositions in Operator Theory}, Birkh\"{a}user Boston, Inc., Boston, MA, 1997.

\bibitem{Lan73}
E. C. Lance,
On nuclear $C^*$-algebras,
{\it J. Funct. Anal.} {\bf 12} (1973), 157--176.

\bibitem{Lor97}
T. Loring,
\textit{Lifting Solutions to Perturbing Problems in $C^*$-Algebras},
Fields Institutue Monographs \textbf{8}, American Mathematical Society, Providence,  1997.


\bibitem{LS12}
T. Loring and T. Shulman,
Non-commutative semialgebraic sets and associated lifting problems,  
{\it Trans. Amer. Math. Soc.} {\bf 364} (2012), 
721--744. 


\bibitem{LS12'}
T. Loring and T. Shulman,
Non-commutative semialgebraic sets in nilpotent variables, 
{\it New York J. Math.} {\bf 14} (2012), 361--372. 

\bibitem{LS14}
T. Loring and T. Shulman,
Lifting algebraic contractions in $C^*$-algebras,
\textit{Oper. Theory Adv. Appl.} {\bf 233} (2014), 85-92.

\bibitem{MM74} B. B. Morrel and P. S. Muhly, Centered operators, \textit{Studia Math.} {\bf 51} (1974), 251--263.

\bibitem{vN} 
J. von Neumann, Eine Spektraltheorie f\"{u}r allgemeine Operatoren eines unit\"{a}ren Raumes, {\it Math. Nachr.} {\bf 4} (1951), 258--281.

\bibitem{O16}
T. Oikhberg, A note on universal operators, in: Ordered Structures and Applications, pp. 339-347, Trends Math., Birkh\"{a}user/Springer, Cham, 2016.





\bibitem{Oza01}
N. Ozawa, On the lifting property for universal $C^*$-algebras of operator spaces, {\it J. Operator Theory}, {\bf 46} (2001), 579-591. 

\bibitem{Pis03}
G. Pisier, 
\textit{Introduction to Operator Space Theory},
London Mathematical Society Lecture Note Series \textbf{294}, Cambridge University Press, 2003.

\bibitem{Pop89}
G. Popescu, Isometric dilations for infinite sequences of noncommuting operators, {\it Trans.
Amer. Math. Soc.} {\bf 316} (1989), 523--536.

\bibitem{Pop91}
G. Popescu, Von Neumann inequality for $(B(\H)^n)_1$, {\it Math. Scand.} {\bf 68} (1991), 292--304.



\bibitem{RR73}
H. Radjavi and P. Rosenthal,
{\it Invariant Subspaces}, Springer-Verlag, New York, 1973.

\bibitem{R1959}
G.-C. Rota,
Note on the invariant subspaces of linear operators,
\textit{Rend. Circ. Mat. Palermo (2)} \textbf{8} (1959), 182--184. 

\bibitem{R1960}
G.-C. Rota,
On models for linear operators,
\textit{Comm. Pure Appl. Math.} \textbf{13} (1960), 469--472. 

\bibitem{Shu08}
T. Shulman, 
Lifting of nilpotent contractions,
\textit{Bull. London Math. Soc.} {\bf 40} (2008), 1002--1006.





\bibitem{Top68}
D. Topping, UHF algebras are singly generated, \textit{Math. Scand.} {\bf 22} (1968), 224--226. 


\bibitem{Voi76}
D. Voiculescu, A non-commutative Weyl-von Neumann theorem, {\it Rev. Roum. Math. Pures et Appl.} {\bf 21} (1976), 97--113.

\bibitem{Voi91}
D. Voiculescu, A note on quasi-diagonal $C^*$-algebras and homotopy, {\it Duke Math.
J.} {\bf 62} (1991), 
267--271.


\end{thebibliography}
\end{document}